\definecolor{darkblue}{rgb}{0.0,0.0,0.3}
\newcommand{\R}{\mathbb R}
\newtheorem{theorem}{Theorem}
\newtheorem{lemma}{Lemma}
\newtheorem{assumption}{Assumption}
\newtheorem{remark}{Remark}
\theoremstyle{definition}
\theoremstyle{remark}
\theoremstyle{definition}
\newcommand{\vertiii}[1]{{\left\vert\kern-0.1ex\left\vert\kern-0.1ex\left\vert #1\right\vert\kern-0.1ex\right\vert\kern-0.1ex\right\vert}}
\newcommand{\vvvertiii}[1]{{\left\vert\kern-0.15ex\left\vert\kern-0.15ex\left\vert #1 \right\vert\kern-0.15ex\right\vert\kern-0.15ex\right\vert}}
\newcommand{\TWODSS}{\raisebox{0pt}{\includegraphics{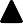}}}
\newcommand{\THREEDSS}{\raisebox{0pt}{\includegraphics{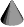}}}
\newcommand{\CLAMPED}{\raisebox{0pt}{\includegraphics{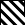}}}
\newcommand{\FREE}{\raisebox{0pt}{\includegraphics{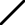}}}
\newcommand{\SYM}{\raisebox{0pt}{\includegraphics{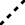}}}
\newcommand{\pVariable}{\xi}
\newcommand{\FFF}{a}
\newcommand{\SFF}{b}
\newcommand{\TFF}{c}
\newcommand{\constitutive}{C}
\newcommand{\midsurfDispTrial}{u}
\newcommand{\midsurfDispTest}{v}
\newcommand{\midsurfRot}{\theta}
\newcommand{\shellDisp}{U}
\newcommand{\bdyNormal}{n}
\newcommand{\bdyTangent}{t}
\newcommand{\memStrain}{\alpha}
\newcommand{\bendStrain}{\beta}
\newcommand{\memStress}{A}
\newcommand{\bendStress}{B}
\newcommand{\GLStrain}{\varepsilon}
\newcommand{\traction}{\tau}
\newcommand{\moment}{B}
\newcommand{\thickness}{\zeta}
\newcommand{\ersatz}{\textup{T}}
\newcommand{\force}{\textup{S}}
\newcommand{\KLS}{S}
\newcommand{\CSconstant}{\gamma}
\newcommand{\Projector}{P}
\newcommand{\Sym}{\text{Sym}}
\newcommand{\parametric}[1]{\hat{#1}}
\newcommand{\undef}[1]{#1}
\newcommand{\fullTens}[1]{ {\bf #1}}
\newcommand{\surfVec}[1]{\uline{#1}}
\newcommand{\surfTens}[1]{\uuline{#1}}
\newcommand{\outOfPlane}[1]{#1_3}
\newcommand{\fullNorm}[1]{\left\|\, #1 \,\right\|}
\newcommand{\applied}[1]{\hat{#1}}
\newcommand{\cTrace}[1]{C_{\textup{tr} #1}}
\newcommand{\cPen}[1]{C_{\textup{pen} #1}}
\newcommand{\cornerSet}[1]{\chi_{#1}}
\newcommand{\aVec}{\undef{\fullTens{\FFF}}}
\newcommand{\aMet}{\undef{\FFF}}
\newcommand{\midsurfMap}{\fullTens{x}}
\newcommand{\shellMap}{\fullTens{X}}
\newcommand{\bCurv}{\undef{\SFF}}
\newcommand{\cCurv}{\undef{\TFF}}
\newcommand{\christoffel}{\undef{\Gamma}}
\newcommand{\shellBody}{\mathcal{B}}
\newcommand{\gVec}{{\bf G}}
\newcommand{\covTrans}{\undef{\Lambda}}
\newcommand{\cartBasis}{{\bf e}}
\newcommand{\parDomain}{\parametric{\Omega}}
\newcommand{\physDomain}{\Omega}
\newcommand{\physBoundary}{\Gamma}
\newcommand{\forcing}{\textup{f}}
\newcommand{\linFunctional}{f}
\newcommand{\shear}{\outOfPlane{\applied{\traction}}}
\numberwithin{equation}{section}
\journal{}
\begin{document}

\begin{frontmatter}

\title{Weak Boundary Condition Enforcement for Linear Kirchhoff-Love Shells:\\ Formulation, Error Analysis, and Verification}

\author[wdas]{Joseph Benzaken\corref{mycorrespondingauthor}}
\cortext[mycorrespondingauthor]{Corresponding author}
\ead{joseph.benzaken@disneyanimation.com}

\author[cuboulder]{John A. Evans}
\author[cubouldermath]{Stephen McCormick}
\author[wdas]{Rasmus Tamstorf}

\address[wdas]{Walt Disney Animation Studios, Burbank, United States}
\address[cuboulder]{Ann and H.J. Smead Department of Aerospace Engineering Sciences, University of Colorado at Boulder, United States}
\address[cubouldermath]{Department of Applied Mathematics, University of Colorado at Boulder, United States}

\begin{abstract}

Stable and accurate modeling of thin shells requires proper enforcement of all types of boundary conditions.  Unfortunately, for Kirchhoff-Love shells, strong enforcement of Dirichlet boundary conditions is difficult because both functional and derivative boundary conditions must be applied.  A popular alternative is to employ Nitsche's method to weakly enforce all boundary conditions.  However, while many Nitsche-based formulations have been proposed in the literature, they lack comprehensive error analyses and verifications.  In fact, existing formulations are variationally inconsistent and yield sub-optimal convergence rates when used with common boundary condition specifications.  In this paper, we present a novel Nitsche-based formulation for the linear Kirchhoff-Love shell that is provably stable and optimally convergent for general sets of admissible boundary conditions.  To arrive at our formulation, we first present a framework for constructing Nitsche's method for any abstract variational constrained minimization problem.  We then apply this framework to the linear Kirchhoff-Love shell and, for the particular case of NURBS-based isogeometric analysis, we prove that the resulting formulation yields optimal convergence rates in both the shell energy norm and the standard $L^2$-norm.  To arrive at this formulation, we derive the Euler-Lagrange equations for general sets of admissible boundary conditions and show that the Euler-Lagrange boundary conditions typically presented in the literature is incorrect.  We verify our formulation by manufacturing solutions for a new shell obstacle course that encompasses flat, parabolic, hyperbolic, and elliptic geometric configurations with a variety of common boundary condition specifications.  These manufactured solutions allow us to robustly measure the error across the entire shell in contrast with current best practices where displacement and stress errors are only measured at specific locations.  We use NURBS discretizations to represent the shell geometry and show optimal convergence rates in both the shell energy norm and the standard $L^2$-norm with varying polynomial degrees for all of the problems in the obstacle course.

\end{abstract}

\begin{keyword}
Nitsche's method \sep Kirchhoff-Love thin shells\sep boundary conditions \sep isogeometric analysis.
\MSC[2010] 35Q74\sep 74K25
\end{keyword}

\end{frontmatter}



\section{Introduction}
\label{sec:introduction}

Thin shells are a key component of engineered structures such as aircraft fuselages, architectural domes, and textile fabrics.  It is well known that thin shells can be modeled using the Kirchhoff-Love kinematical assumption, but this assumption yields a vector-valued fourth-order partial differential equation (PDE) to be solved.  Unfortunately, $C^0$-continuous finite element methods cannot be directly applied to the Galerkin approximation of fourth-order PDEs, so thick shell formulations based on Reissner-Mindlin kinematics are typically preferred in finite element shell analysis \cite{bischoff2018models}.  However, in recent years, isogeometric analysis \cite{Hughes2005} has rekindled interest in Kirchhoff-Love shell formulations.  Splines exhibit the requisite $C^1$-continuity to directly solve fourth-order PDEs, and isogeometric discretizations based on Non-Uniform Rational B-splines (NURBS) \cite{kiendl2009isogeometric,echter2013hierarchic}, multi-patch NURBS \cite{kiendl2010bending}, hierarchical NURBS \cite{coradellohierarchically}, T-splines \cite{bazilevs2012isogeometric,casquero2017arbitrary}, PHT- and RHT-splines \cite{nguyen2011rotation,nguyen2017isogeometric}, and subdivision surfaces \cite{Cirak2000,cirak2001fully} have been successfully applied to the Galerkin approximation of the Kirchhoff-Love shell equations.  Isogeometric discretizations have also been combined with immersed and embedded methods to treat trimmed NURBS \cite{guo2015weak,guo2017parameter,guo2018variationally} and implicitly defined geometries \cite{schollhammer2019kirchhoff}.

Stable and accurate numerical modeling of Kirchhoff-Love shells requires proper enforcement of all types of boundary conditions.  In a classical Galerkin method, Dirichlet boundary conditions are enforced strongly, but this is a difficult task for Kirchhoff-Love shells because both displacement (functional) and normal rotation (derivative) boundary conditions must be applied\footnote{If a given discretization method interpolates both function values and derivatives at specified points in the domain, then both homogeneous and non-homogeneous displacement and normal rotation boundary conditions may be easily enforced in a strong manner.  Unfortunately, state-of-the-art isogeometric discretization methods do not interpolate either function values or derivatives, so strong enforcement of non-homogeneous displacement and normal boundary conditions is much more difficult using these methods.}.  This has inspired the development of weak boundary condition enforcement strategies, the most common approach of which is the classical penalty method wherein penalty terms are added to the underlying variational formulation \cite{lei2015c0,breitenberger2015analysis,duong2017new,herrema2019penalty}.  However, the penalty method is quite inaccurate unless parameters associated with the penalty terms are chosen sufficiently large, but large penalty parameters in turn yield an overly stiff, ill-conditioned linear system after discretization.  A second common approach to weak boundary condition enforcement is to introduce Lagrange multiplier fields \cite{apostolatos2015domain,schuss2019multi}.  The primary disadvantages of this approach are that it leads to a discrete saddle-point problem and stability can only be ensured if the approximation spaces for the primal and Lagrange multiplier fields satisfy the Babu\v{s}ka-Brezzi inf-sup condition \cite{brezzi2012mixed}.

Nitsche's method is an alternative approach for the weak enforcement of Dirichlet boundary conditions.  Nitsche's method was first proposed in 1971 \cite{Nitsche1971}, but it did not grow in popularity until the recent emergence of meshless \cite{FernandezMendez2004}, extended \cite{annavarapu2012robust,hansbo2002unfitted}, immersed \cite{kamensky2015immersogeometric,ruess2013weakly,schillinger2012isogeometric}, and isogeometric \cite{embar2010imposing,Apostolatos2014,nguyen2014nitsche,ruess2014weak,harari2015unified,guo2015nitsche} finite element methods.  For these emerging finite element methods, strong enforcement of boundary and interface conditions is quite difficult due to the non-interpolatory nature of the primal field approximation space along domain boundaries and interfaces.  Nitsche's method involves the addition of consistency, symmetry, and penalty terms to the underlying variational formulation.  The design of the consistency and symmetry terms is guided by the Euler-Lagrange equations for the problem of interest, while the design of the penalty terms is guided by trace inequalities.  Nitsche's method is variationally consistent and stable by construction, and it provides optimal convergence rates.  Moreover, for self-adjoint elliptic PDEs, Nitsche's method yields a relatively well-conditioned symmetric positive-definite linear system after discretization.

Nitsche's method is particularly appealing for Kirchhoff-Love shells since it can be used to enforce both displacement and normal rotation boundary conditions.  It comes as no surprise, then, that a number of Nitsche-based formulations have been proposed in the literature for Kirchhoff-Love shells, most commonly for isogeometric finite element shell analysis \cite{nguyen2017isogeometric,guo2015weak,guo2018variationally,guo2015nitsche}.  However, a comprehensive error analysis or verification has not yet been conducted for any of these formulations.  In fact, as we demonstrate later in this paper, the formulations proposed in \cite{guo2015weak,guo2015nitsche} for the linear Kirchhoff-Love shell are variationally inconsistent and provide sub-optimal convergence rates when used with common boundary condition specifications. This variational inconsistency is due to the fact that existing Nitsche-based formulations are based upon Euler-Lagrange equations typically presented in the literature, and these equations are incorrect for general sets of admissible boundary conditions.  In particular, the so-called ersatz force that appears in one of the Euler-Lagrange boundary conditions is incorrect.  We believe this fact has been missed previously in the literature as state-of-the-art verification tests, such as the so-called ``shell obstacle course'' \cite{Belytschko1985}, are unable to assess order of accuracy.  Instead, these verification tests only gauge convergence of displacement or stress fields to reference values at particular spatial locations.

In this paper, we present a new Nitsche-based formulation for the linear Kirchhoff-Love shell that is provably stable and optimally convergent for general sets of admissible boundary conditions.  To arrive at our formulation, we first present a framework for constructing Nitsche's method for an abstract variational constrained minimization problem admitting a generalized Green's identity.  Our construction follows that of \cite{stenberg1995some} in that we first construct a stabilized Lagrange multiplier method for the abstract variational problem and then statically condense the Lagrange multiplier field.  With the guidance of generalized trace and Cauchy-Schwarz inequalities, we are able to establish conditions under which the resulting method is both stable and convergent.  We then apply this abstract framework to the construction of a stable and convergent Nitsche-based formulation for the linear Kirchhoff-Love shell.  The resulting formulation has not appeared previously in the literature.  Most notably, it involves consistency and symmetry terms associated with corner forces and penalty terms associated with corner displacement boundary conditions, similar to the Nitsche-based formulation proposed in \cite{harari2012embedded} for the Kirchhoff-Love plate.  To arrive at our formulation, we derive the Euler-Lagrange equations for general sets of admissible boundary conditions and discover, as previously noted, that the equations typically presented in the literature are incorrect.  For a NURBS-based isogeometric discretization of the linear Kirchhoff-Love shell, we establish \textit{a priori} error estimates for the $H^2$-, $H^1$-, and $L^2$-norms of the error in the displacement field, and we confirm these estimates using a new suite of manufactured solutions that covers a wide variety of geometric configurations and boundary conditions.  To the best of our knowledge, this suite is the first comprehensive verification test bed capable of assessing convergence rates for Kirchhoff-Love shell discretizations, and we are aware of only one manufactured solution test case for the linear Kirchhoff-Love shell in the literature \cite{gfrerer2018code}.

While the focus of this paper is weak enforcement of boundary conditions for the linear Kirchhoff-Love shell, the abstract framework presented here can be employed to construct Nitsche-based formulations for other linear problems arising from energy minimization.  Moreover, given the close connection between the method of stabilized Lagrange multipliers, Nitsche's method, and the symmetric interior penalty Galerkin method \cite{arnold1982interior}, the framework can also be used to construct discontinuous Galerkin \cite{hansbo2002discontinuous,Noels2008} and continuous/discontinuous Galerkin methods \cite{engel2002continuous} for membranes, plates, shells, and other problems of interest.  For example, the Nitsche-based Kirchhoff-Love formulation presented here can be easily modified to weakly enforce continuity of displacement and normal rotation along patch interfaces for non-conforming multi-patch NURBS geometries and along trimming curves for trimmed NURBS geometries.  Finally, while the framework presented in this paper is strictly for linear problems arising from energy minimization, it is easily extended to nonlinear and nonsymmetric problems, including those involving contact, damage, and fracture.  In fact, the only reason we consider linear problems arising from energy minimization in this paper is the simplicity in establishing stability and convergence results for these problems in an abstract setting, and we plan to extend our formulation to nonlinear Kirchhoff-Love shells in future work \cite{kiendl2015isogeometric,tepole2015isogeometric}.

The remainder of this paper proceeds as follows. In Section~\ref{sec:Nitsche}, Nitsche's method is constructed for an abstract variational constrained minimization problem, and this framework is applied to the construction of a Nitsche-based formulation for the linear Kirchhoff-Love shell in Section~\ref{sec:KL_Shell}.  In Section~\ref{sec:apriori}, \textit{a priori} error estimates for the $H^2$-, $H^1$-, and $L^2$-norms of the error in the displacement field are established for NURBS-based isogeometric discretizations of the linear Kirchhoff-Love shell problem, and these estimates are confirmed using a suite of manufactured solutions in Section~\ref{sec:num_results}.  Finally, concluding remarks and future research directions are presented in Section~\ref{sec:conclusion}.


\section{Nitsche's Method for an Abstract Variational Constrained Minimization Problem}
\label{sec:Nitsche}

This section develops an abstract framework and theory that is applied later to the linear Kirchhoff-Love shell.  To this end, let $\mathcal{V}$ and $\mathcal{Q}$ be two Hilbert spaces with respective inner products $(\cdot,\cdot)_\mathcal{V}$ and $(\cdot,\cdot)_\mathcal{Q}$ and induced norms $\| \cdot \|_\mathcal{V} = (\cdot,\cdot)^{1/2}_\mathcal{V}$ and $\| \cdot \|_\mathcal{Q} = (\cdot,\cdot)^{1/2}_\mathcal{Q}$. We also use the notation $| \cdot |$ to refer to the absolute value for scalar quantities and the Euclidean norm for vector quantities. Let $\mathcal{V}^*$ and $\mathcal{Q}^*$ be the respective dual spaces of $\mathcal{V}$ and $\mathcal{Q}$, and let ${}_{\mathcal{V}^*}\langle \cdot, \cdot \rangle_{\mathcal{V}}$ be the duality pairing between $\mathcal{V}$ and its dual and ${}_{\mathcal{Q}^*}\langle \cdot, \cdot \rangle_{\mathcal{Q}}$ the duality pairing between $\mathcal{Q}$ and its dual.  Let $\mathcal{T}: \mathcal{V} \rightarrow \mathcal{Q}$ be a bounded, surjective linear map, and given $g \in \mathcal{Q}$, define
\begin{equation*}
\mathcal{V}_g := \left\{ v \in \mathcal{V}: \mathcal{T}v = g \right\}.
\end{equation*}
Finally, let $a : \mathcal{V} \times \mathcal{V} \rightarrow \mathbb{R}$ be a bounded, symmetric, positive semi-definite bilinear form satisfying the following coercivity condition on the kernel of $\mathcal{T}$:
\begin{equation*}
a(v,v) \geq C \| v \|^2_\mathcal{V} \hspace{10pt} \forall v \in \mathcal{V}_0
\label{eqn:coerA}
\end{equation*}
for some constant $C \in \mathbb{R}_+$.

\begin{remark}
In the context of structural mechanics, $\mathcal{V}$ is the space of admissible displacements free of boundary conditions and $\mathcal{Q}$ is the space of admissible essential boundary conditions (e.g., displacement and rotation boundary conditions in the context of a Kirchhoff-Love shell).  The map $\mathcal{T}$ then gives the trace of the displacement field (e.g., the displacement and normal rotation in the context of a Kirchhoff-Love shell) along portions of the boundary where essential boundary conditions are being enforced.  Consequently, $\mathcal{V}_g$ denotes the space of admissible displacements satisfying prescribed essential boundary conditions and $\mathcal{V}_0$ denotes the corresponding space of virtual displacements.
\end{remark}

We are interested in the following constrained minimization problem:
$$
(M) \left\{ \hspace{5pt}
\parbox{6.00in}{
\noindent Given $f \in \mathcal{V}^*$ and $g \in \mathcal{Q}$, find $u \in \mathcal{V}_g$ that minimizes the total energy
\begin{eqnarray*}
E_{\textup{total}}(u) = E_{\textup{int}}(u) + E_{\textup{ext}}(u)
\end{eqnarray*}
where the \textbf{\textit{internal energy}} is defined by
\begin{eqnarray*}
E_{\textup{int}}(u) = \frac{1}{2} a(u,u)
\end{eqnarray*}
and the \textbf{\textit{external energy}} is defined by
\begin{eqnarray*}
E_{\textup{ext}}(u) = -{}_{\mathcal{V}^*}\langle f, u \rangle_{\mathcal{V}}.
\end{eqnarray*}
}
\right.
$$
Note that the G\^{a}teaux derivative of the total energy associated with a solution is zero for any variation $\delta u \in \mathcal{V}_0$.  Consequently, Problem $(M)$ is equivalent to the following variational problem:
$$
(V) \left\{ \hspace{5pt}
\parbox{6.0in}{
\noindent Given $f \in \mathcal{V}^*$ and $g \in \mathcal{Q}$, find $u \in \mathcal{V}_g$ such that
\begin{eqnarray}
a(u, \delta u) = {}_{\mathcal{V}^*}\langle f, \delta u \rangle_{\mathcal{V}}
\label{eq:virtual_work}
\end{eqnarray}
for every $\delta u \in \mathcal{V}_0$.
}
\right.
$$
The Lax-Milgram theorem guarantees that Problem $(V)$ has a unique solution $u \in \mathcal{V}$ that depends continuously on the input data $f \in \mathcal{V}^*$ and $g \in \mathcal{Q}$ \cite{EvansPDEs}.

\begin{remark}
The quantity $E_{\textup{int}}(u)$ denotes the internal energy of a system displaced by $u$ due to internal stresses and strains, while the quantity $E_{\textup{ext}}(u)$ denotes the external energy of the same system due to external forces, tractions, and moments.  The quantity $a(u,\delta u)$ represents the virtual work due to internal stresses as the system undergoes a virtual displacement $\delta u$, while the quantity ${}_{\mathcal{V}^*}\langle f, \delta u \rangle_{\mathcal{V}}$ represents the virtual work done to the system by external forces, tractions, and moments. Therefore, \eqref{eq:virtual_work} is often referred to as the principle of virtual work since it states that in equilibrium the external and internal virtual work must be in balance.  The kernel of the bilinear form $a(\cdot,\cdot)$ consists of rigid body modes.
\end{remark}

Let $\mathcal{V}_h \subset \mathcal{V}$ be a finite-dimensional approximation space and $\mathcal{V}_{g,h} = \mathcal{V}_h \cap \mathcal{V}_g$ for every $g \in \mathcal{Q}$.  The Bubnov-Galerkin approximation of Problem $(V)$ then reads as follows:
$$
(V_h) \left\{ \hspace{5pt}
\parbox{6.00in}{
\noindent Given $f \in \mathcal{V}^*$ and $g \in \mathcal{Q}$, find $u_h \in \mathcal{V}_{g,h}$ such that
\begin{eqnarray*}
a(u_h, \delta u_h) = {}_{\mathcal{V}^*}\langle f, \delta u_h \rangle_{\mathcal{V}}
\end{eqnarray*}
for every $\delta u_h \in \mathcal{V}_{0,h}$.
}
\right.
$$
The Lax-Milgram theorem guarantees that Problem $(V_h)$ has a unique solution $u_h \in \mathcal{V}_h$ that depends continuously on the input data $f \in \mathcal{V}^*$ and $g \in \mathcal{Q}$, and it is also easily shown that the solution to Problem $(V_h)$ best approximates the solution to Problem $(V)$ with respect to the norm induced by the bilinear form $a(\cdot,\cdot)$.  The difficulty associated with Problem $(V_h)$ is the need for strong enforcement of the condition $\mathcal{T}u_h = g$. This is straightforward for simple approximation spaces (e.g., piecewise linear finite elements), simple applications (e.g., linear elasticity), and simple constraints (e.g., displacement boundary conditions). However, for complex approximation spaces (e.g., B-splines and subdivision surfaces), complex applications (e.g., Kirchhoff-Love shells), and complex constraints (e.g., rotation boundary conditions), this condition is much more difficult to enforce.  Alternatively, we may turn to the method of Lagrange multipliers for weak enforcement of $\mathcal{T}u_h = g$.

It is well known that the solution of Problem $(M)$ may be found by solving the following saddle point problem:
$$
(\mathit{SP}) \left\{ \hspace{5pt}
\parbox{5.90in}{
\noindent Given $f \in \mathcal{V}^*$ and $g \in \mathcal{Q}$, find the saddle point $(u,\lambda) \in \mathcal{V} \times \mathcal{Q}^*$ of the Lagrangian
\begin{eqnarray*}
\mathcal{L}(u,\lambda) = E_{\textup{total}}(u) + {}_{\mathcal{Q}^*}\langle \lambda, \mathcal{T}u - g \rangle_{\mathcal{Q}}.
\end{eqnarray*}
}
\right.
$$
Note that the G\^{a}teaux derivative of the Lagrangian at the solution is zero for any direction $(\delta u, \delta \lambda) \in \mathcal{V} \times \mathcal{Q}^*$.  Consequently, Problem $(\mathit{SP)}$ is equivalent to the following variational problem:
$$
(L) \left\{ \hspace{5pt}
\parbox{6.00in}{
\noindent Given $f \in \mathcal{V}^*$ and $g \in \mathcal{Q}$, find $(u,\lambda) \in \mathcal{V} \times \mathcal{Q}^*$ such that
\begin{eqnarray}
a(u, \delta u) + {}_{\mathcal{Q}^*}\langle \lambda, \mathcal{T}\delta u \rangle_{\mathcal{Q}} + {}_{\mathcal{Q}^*}\langle \delta \lambda, \mathcal{T} u \rangle_{\mathcal{Q}} = {}_{\mathcal{V}^*}\langle f, \delta u \rangle_{\mathcal{V}} + {}_{\mathcal{Q}^*}\langle \delta \lambda, g \rangle_{\mathcal{Q}} \label{eq:Lagrange}
\end{eqnarray}
for every $(\delta u,\delta \lambda) \in \mathcal{V} \times \mathcal{Q}^*$.
}
\right.
$$
By the surjectivity and boundedness of $\mathcal{T}$, Problem $(L)$ has a unique solution $(u,\lambda) \in \mathcal{V} \times \mathcal{Q}^*$ that depends continuously on the input data $f \in \mathcal{V}^*$ and $g \in \mathcal{Q}$.  The variable $\lambda$ is commonly referred to as the Lagrange multiplier associated with the constraint $\mathcal{T}u = g$.

\begin{remark}
  In the context of structural mechanics, $\lambda$ comprises the reaction or constraint forces, tractions, and moments that result from application of essential boundary conditions.
\end{remark}

The discretization of Problem $(L)$ requires approximations of both $\mathcal{V}$ and $\mathcal{Q}^*$.  To this end, let $\mathcal{V}_h \subset \mathcal{V}$ and $\mathcal{Q}^*_h \subset \mathcal{Q}^*$ be two finite-dimensional approximation spaces.  The Galerkin approximation of Problem $(L)$ then reads as follows:
$$
(L_h) \left\{ \hspace{5pt}
\parbox{6.00in}{
\noindent Given $f \in \mathcal{V}^*$ and $g \in \mathcal{Q}$, find $(u_h,\lambda_h) \in \mathcal{V}_h \times \mathcal{Q}^*_h$ such that:
\begin{eqnarray*}
a(u_h, \delta u_h) + {}_{\mathcal{Q}^*}\langle \lambda_h, \mathcal{T}\delta u_h \rangle_{\mathcal{Q}} + {}_{\mathcal{Q}^*}\langle \delta \lambda_h, \mathcal{T} u_h \rangle_{\mathcal{Q}} = {}_{\mathcal{V}^*}\langle f, \delta u_h \rangle_{\mathcal{V}} + {}_{\mathcal{Q}^*}\langle \delta \lambda_h, g \rangle_{\mathcal{Q}}
\end{eqnarray*}
for every $(\delta u_h,\delta \lambda_h) \in \mathcal{V}_h \times \mathcal{Q}_h^*$.
}
\right.
$$
The advantage of the formulation given by Problem ($L_h$), which is commonly referred to as the \textit{\textbf{method of Lagrange multipliers}}, over the formulation given by Problem ($V_h$), which is commonly referred to as \textit{\textbf{Galerkin's method}}, is that the condition $\mathcal{T} u_h = g$ need not be directly embedded into the solution space $\mathcal{V}_h$.  However, the disadvantage of the method of Lagrange multipliers is that two approximation spaces $\mathcal{V}_h \subset \mathcal{V}$ and $\mathcal{Q}^*_h \subset \mathcal{Q}^*$ are needed and, moreover, they must be chosen intelligently in order to arrive at a stable and convergent method.  Namely, the two approximation spaces $\mathcal{V}_h \subset \mathcal{V}$ and $\mathcal{Q}^*_h \subset \mathcal{Q}^*$ must satisfy the so-called \textit{\textbf{Babu\v{s}ka-Brezzi inf-sup condition}} \cite{babuvska1973finite}.  For simple approximation spaces and applications, selecting inf-sup stable approximation spaces is reasonably straightforward, but for complex problems, it is more difficult. An alternative is to use stabilization to bypass the inf-sup condition entirely.  This is a rather elegant solution first proposed by Franca and Hughes for enforcing incompressibility for Stokes flow \cite{hughes1986new} and later proposed by Barbosa and Hughes for enforcing essential boundary conditions for contact problems \cite{barbosa1991finite}.

To this end, we make the following assumption:

\begin{assumption}
  \label{assumption1}
  There exists a dense subspace $\tilde{\mathcal{V}} \subset \mathcal{V}$ and linear maps $\mathcal{L} : \tilde{\mathcal{V}} \rightarrow \mathcal{V}^*$ and $\mathcal{B}: \tilde{\mathcal{V}} \rightarrow \mathcal{Q}^*$ such that the following \textit{\textbf{generalized Green's identity}} holds:
  \begin{equation}
      a(w,v) = {}_{\mathcal{V}^*}\langle \mathcal{L}w, v \rangle_{\mathcal{V}} + {}_{\mathcal{Q}^*}\langle \mathcal{B}w, \mathcal{T}v \rangle_{\mathcal{Q}}
      \label{eq:int_by_parts}
    \end{equation}
    for all $w \in \tilde{\mathcal{V}}$ and $v \in \mathcal{V}$, and the solution $u$ of Problem $(M)$ satisfies $\mathcal{L} u = f$ whenever $f \in \mathcal{V}^*$ and $g \in \mathcal{Q}$ are such that $u \in \tilde{\mathcal{V}}$.
\end{assumption}

\begin{remark}
  In the context of structural mechanics, \eqref{eq:int_by_parts} results from the application of integration by parts to the original variational formulation in order to arrive at the Euler-Lagrange equations. Thus, in this context, the map $\mathcal{L}$ encodes the differential-algebraic operators associated with the governing system of PDEs in their strong form as well as those associated with the natural boundary conditions. The map $\mathcal{B}$ encodes the energetically conjugate essential boundary conditions that result from the application of integration by parts.  In the context of linear elasticity, the quantity $\mathcal{B}u$, where $u$ is the solution to Problem $(M)$, is the traction field along portions of the boundary where essential boundary conditions are being enforced.  In the context of Kirchhoff-Love shells, the quantity $\mathcal{B}u$ consists of shears, moments, and corner forces.  Note that in order for the Euler-Lagrange equations to hold, the solution to Problem $(M)$ must be sufficiently smooth.  This is why we introduced an additional subspace $\tilde{\mathcal{V}} \subset \mathcal{V}$, one for which \eqref{eq:int_by_parts} holds. Generally, $\tilde{\mathcal{V}}$ is significantly more regular than $\mathcal{V}$, raising concerns about the numerical practicality of a method requiring such level of regularity. However, as discussed below, we can extend the necessary operators to an enlarged, additive space between $\tilde{\mathcal{V}}$ and a subspace of $\mathcal{V}$. This permits discretizations of far less regularity and should alleviate these initial concerns.
  \label{remark:V_tilde}
\end{remark}

\begin{remark}
  To distinguish the Green's identity given in \eqref{eq:int_by_parts} from Green's first, second, and third identities, we have used the clarifier ``generalized''.  For the Poisson problem subject to homogeneous Dirichlet boundary conditions, the Green's identity given in \eqref{eq:int_by_parts} coincides with Green's first identity.
  \end{remark}

With Assumption~\ref{assumption1} in hand, we establish an important result giving an expression for the Lagrange multiplier $\lambda \in \mathcal{Q}^*$, provided the solution $u$ of Problem $(M)$ satisfies $u \in \tilde{\mathcal{V}}$.

\begin{theorem}
  \label{theorem:L_is_Bu}
  Suppose that Assumption~\ref{assumption1} holds and the solution $u$ of Problem $(M)$ satisfies $u \in \tilde{\mathcal{V}}$.  Then the solution $(u,\lambda)$ of Problem $(L)$ satisfies $\lambda = - \mathcal{B}u$.

  \begin{proof}
    By \eqref{eq:Lagrange}, we know that
    \begin{align*}
        0 & = a(u, \delta u) + {}_{\mathcal{Q}^*}\langle \lambda, \mathcal{T}\delta u \rangle_{\mathcal{Q}} - {}_{\mathcal{V}^*}\langle f, \delta u \rangle_{\mathcal{V}} \nonumber \\
        & = {}_{\mathcal{V}^*}\langle \mathcal{L}u - f, \delta u \rangle_{\mathcal{V}} + {}_{\mathcal{Q}^*}\langle \mathcal{B}u + \lambda, \mathcal{T}\delta u \rangle_{\mathcal{Q}} \\
        & = {}_{\mathcal{Q}^*}\langle \mathcal{B}u + \lambda, \mathcal{T}\delta u \rangle_{\mathcal{Q}} \nonumber
    \end{align*}
    for all $\delta u \in \mathcal{V}$.  Since $\mathcal{T}$ is surjective, it follows that $\lambda = - \mathcal{B}u$.
  \end{proof}
\end{theorem}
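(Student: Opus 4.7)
The plan is to exploit the extra regularity $u \in \tilde{\mathcal{V}}$ to rewrite the variational equation \eqref{eq:Lagrange} in a form where the duality pairing on $\mathcal{Q}$ isolates $\mathcal{B}u + \lambda$, and then invoke surjectivity of $\mathcal{T}$ to conclude pointwise equality in $\mathcal{Q}^*$.

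Concretely, I would first restrict attention to test functions of the form $(\delta u, 0) \in \mathcal{V} \times \mathcal{Q}^*$, so that \eqref{eq:Lagrange} reduces to
\begin{equation*}
a(u, \delta u) + {}_{\mathcal{Q}^*}\langle \lambda, \mathcal{T}\delta u \rangle_{\mathcal{Q}} = {}_{\mathcal{V}^*}\langle f, \delta u \rangle_{\mathcal{V}}
\end{equation*}
for every $\delta u \in \mathcal{V}$. Next, since $u \in \tilde{\mathcal{V}}$ by hypothesis, I can apply the generalized Green's identity \eqref{eq:int_by_parts} to replace $a(u,\delta u)$ by ${}_{\mathcal{V}^*}\langle \mathcal{L}u, \delta u \rangle_{\mathcal{V}} + {}_{\mathcal{Q}^*}\langle \mathcal{B}u, \mathcal{T}\delta u \rangle_{\mathcal{Q}}$. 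The second half of Assumption~\ref{assumption1} gives $\mathcal{L}u = f$, which cancels the ${}_{\mathcal{V}^*}\langle \cdot, \delta u \rangle_{\mathcal{V}}$ contributions and leaves
\begin{equation*}
{}_{\mathcal{Q}^*}\langle \mathcal{B}u + \lambda, \mathcal{T}\delta u \rangle_{\mathcal{Q}} = 0 \quad \text{for all } \delta u \in \mathcal{V}.
\end{equation*}

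Finally, because $\mathcal{T}: \mathcal{V} \to \mathcal{Q}$ is surjective, the set $\{ \mathcal{T}\delta u : \delta u \in \mathcal{V} \}$ equals all of $\mathcal{Q}$, so the displayed identity forces $\mathcal{B}u + \lambda = 0$ as elements of $\mathcal{Q}^*$, giving $\lambda = -\mathcal{B}u$ as claimed. I do not anticipate any serious obstacle: the only subtlety is making sure the Green's identity is legitimately applicable, which is exactly what the hypothesis $u \in \tilde{\mathcal{V}}$ guarantees, and that the conclusion is drawn in the correct dual space, which is handled cleanly by surjectivity of $\mathcal{T}$.
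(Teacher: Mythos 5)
Your proposal is correct and follows essentially the same route as the paper: set $\delta\lambda = 0$ in \eqref{eq:Lagrange}, substitute the generalized Green's identity \eqref{eq:int_by_parts} using $u \in \tilde{\mathcal{V}}$, cancel via $\mathcal{L}u = f$, and invoke surjectivity of $\mathcal{T}$. The only difference is cosmetic — you made the restriction to test directions $(\delta u, 0)$ explicit, whereas the paper does so implicitly.
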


\begin{remark}
  In the context of structural mechanics, Theorem 1 comes as no surprise.  It says that the reaction forces, tractions, and moments that result from the application of essential boundary conditions are balanced by the traction field in the context of linear elasticity and shears, moments, and corner forces in the context of Kirchhoff-Love shells along the portions of the boundary where essential boundary conditions are being enforced.  Thus, Theorem 1 is simply a re-statement of Newton's third law: For every action, there is an equal and opposite reaction.
\end{remark}

Given Theorem~\ref{theorem:L_is_Bu}, we can now construct a \textit{\textbf{stabilized Lagrange multiplier method}}.  First, let $\epsilon: \textup{dom}(\epsilon) \subseteq \mathcal{Q}^* \rightarrow \mathcal{Q}$ be a densely defined, positive, surjective, self-adjoint linear map.  Note that since $\epsilon$ is linear, positive, and surjective, it is also invertible.  We assume that $\mathcal{Q}^*_h \subset \textup{dom}(\epsilon)$.  Provided that the domain of definition of the operator $\mathcal{B}: \tilde{\mathcal{V}} \rightarrow \mathcal{Q}^*$ can be extended to the enlarged space $\tilde{\mathcal{V}} + \mathcal{V}_h$, we also assume that $\left\{ \mathcal{B}v: v \in \tilde{\mathcal{V}} + \mathcal{V}_h \right\} \subset \textup{dom}(\epsilon)$.  We associate with $\epsilon$ a symmetric, positive-definite \textit{\textbf{stabilization bilinear form}} $S: \textup{dom}(\epsilon) \times \textup{dom}(\epsilon) \rightarrow \mathbb{R}$ satisfying
\begin{equation*}
S(\mu,\xi) = {}_{\mathcal{Q}^*}\langle \xi, \epsilon \mu \rangle_{\mathcal{Q}}
\end{equation*}
for all $\mu, \xi \in \textup{dom}(\epsilon)$.  If the solution $u$ of Problem $(M)$ satisfies $u \in \tilde{\mathcal{V}}$, then by Theorem~\ref{theorem:L_is_Bu}, since $\lambda + \mathcal{B}u = 0$, the solution $(u,\lambda) \in \mathcal{V} \times \mathcal{Q}^*$ of Problem $(L)$ satisfies
\begin{equation*}
S(\lambda + \mathcal{B}u,\delta \lambda) = 0
\end{equation*}
for all $\delta \lambda \in \textup{dom}(\epsilon)$.  This then inspires the following stabilized Lagrange multiplier method:

$$
(L^s_h) \left\{ \hspace{5pt}
\parbox{6.00in}{
\noindent Given $f \in \mathcal{V}^*$ and $g \in \mathcal{Q}$, find $(u_h,\lambda_h) \in \mathcal{V}_h \times \mathcal{Q}^*_h$ such that
\begin{equation}
B_h\left( (u_h, \lambda_h), (\delta u_h, \delta \lambda_h) \right) = {}_{\mathcal{V}^*}\langle f, \delta u_h \rangle_{\mathcal{V}} + {}_{\mathcal{Q}^*}\langle \delta \lambda_h, g \rangle_{\mathcal{Q}}
\label{eq:stabilized}
\end{equation}
for every $\left( \delta u_h, \delta \lambda_h \right) \in \mathcal{V}_h  \times \mathcal{Q}^*_h$, where $B_h: \left( \mathcal{V}_h \times \mathcal{Q}^*_h \right) \times \left( \mathcal{V}_h \times \mathcal{Q}^*_h \right) \rightarrow \mathbb{R}$ is the bilinear form defined by
\begin{equation*}
B_h\left( (w_h, \theta_h), (v_h, \mu_h) \right) = a(w_h, v_h) + {}_{\mathcal{Q}^*}\langle \theta_h, \mathcal{T}v_h \rangle_{\mathcal{Q}} + {}_{\mathcal{Q}^*}\langle \mu_h, \mathcal{T} w_h \rangle_{\mathcal{Q}} - S(\theta_h + \mathcal{B} w_h,\mu_h + \mathcal{B} v_h)
\end{equation*}
for every $(w_h,\theta_h), (v_h,\mu_h) \in \mathcal{V}_h \times \mathcal{Q}_h^*$.
}
\right.
$$
The stabilization bilinear form acts to improve the stability of the method of Lagrange multipliers.  In fact, provided that $\epsilon$ is chosen appropriately, the stabilized Lagrange multiplier method can restore stability for an otherwise unstable choice of $\mathcal{V}_h$ and $\mathcal{Q}^*_h$ \cite{hughes1986new,barbosa1991finite}.

Nitsche's method corresponds to the formal selection of $\mathcal{Q}_h^*$ as the entire space $\textup{dom}(\epsilon)$ rather than a finite dimensional subspace in the stabilized Lagrange multiplier method.  This selection generally yields an infinite-dimensional linear system since $\textup{dom}(\epsilon)$ is dense in $\mathcal{Q}^*$.  However, the Lagrange multiplier variable $\lambda_h$ may be statically condensed from the system, resulting in a finite-dimensional linear system for the primal variable $u_h$.  To see this, take $\delta u_h = 0$ in \eqref{eq:stabilized} to obtain
\begin{equation*}
{}_{\mathcal{Q}^*}\langle \delta \lambda_h, \mathcal{T} u_h - g - \epsilon \left( \lambda_h + \mathcal{B} u_h \right) \rangle_{\mathcal{Q}} = 0
\end{equation*}
for all $\delta \lambda_h \in \mathcal{Q}^*_h$.  Since $\mathcal{Q}_h^* = \textup{dom}(\epsilon)$, $\textup{dom}(\epsilon)$ is dense in $\mathcal{Q}^*$, and $\epsilon$ is invertible, it follows that
\begin{equation*}
\lambda_h = -\mathcal{B} u_h + \epsilon^{-1} \left( \mathcal{T} u_h - g \right).
\label{eqn:NitscheLM}
\end{equation*}
Inserting the above expression for $\lambda_h$ into \eqref{eq:stabilized} and taking $\delta \lambda_h = 0$, we obtain the following reduced formulation:

\begin{mybox}[\emph{Nitsche's Method for an Abstract Variational Constrained Minimization Problem}]
  \vspace{-7pt}
$$
(N_h) \left\{ \hspace{5pt}
\parbox{6.00in}{
Given $f \in \mathcal{V}^*$ and $g \in \mathcal{Q}$, find $u_h \in \mathcal{V}_h$ such that
\begin{equation*}
  a_h(u_h,\delta u_h) = {}_{\mathcal{V}^*}\langle f, \delta u_h \rangle_{\mathcal{V}} \ {\color{ForestGreen} \underbrace{ - {}_{\mathcal{Q}^*}\langle \mathcal{B} \delta u_h, g \rangle_{\mathcal{Q}} }_\text{Symmetry Term} } \ {\color{Orchid} \underbrace{ + {}_{\mathcal{Q}^*}\langle \epsilon^{-1} \mathcal{T} \delta u_h, g \rangle_{\mathcal{Q}} }_\text{Penalty Term} }
  \label{eq:nitsche}
\end{equation*}
for every $\delta u_h \in \mathcal{V}_h$, where $a_h: \left( \tilde{\mathcal{V}} + \mathcal{V}_h \right) \times \left( \tilde{\mathcal{V}} + \mathcal{V}_h \right) \rightarrow \mathbb{R}$ is the bilinear form defined by
\begin{equation*}
  a_h(w, v) = a(w, v) {\color{Cerulean} \ \underbrace{ - {}_{\mathcal{Q}^*}\langle \mathcal{B} w, \mathcal{T}v \rangle_{\mathcal{Q}} }_\text{Consistency Term} } \ {\color{ForestGreen} \underbrace{ - {}_{\mathcal{Q}^*}\langle \mathcal{B} v, \mathcal{T} w \rangle_{\mathcal{Q}} }_\text{Symmetry Term} } \ {\color{Orchid} \underbrace{ + {}_{\mathcal{Q}^*}\langle \epsilon^{-1} \mathcal{T} v, \mathcal{T} w \rangle_{\mathcal{Q}} }_\text{Penalty Term} }
\end{equation*}
for all $w, v \in \tilde{\mathcal{V}} + \mathcal{V}_h$.
}
\right.
$$
\end{mybox}

\noindent We refer to the above formulation as \textit{\textbf{Nitsche's method}} since it is a generalization of Nitsche's method for second-order elliptic boundary value problems to arbitrary variational constrained minimization problems.

\begin{remark} Note that we can interpret Nitsche's method as a Lagrange multiplier method in which the Lagrange multiplier field $\lambda$ is approximated as
\begin{equation*}
\lambda_h = -\mathcal{B} u_h + \epsilon^{-1} \left( \mathcal{T} u_h - g \right).
\end{equation*}
Since the Lagrange multiplier field often represents one or more physical quantities of interest (e.g., $\lambda$ comprises the reaction or constraint forces, tractions, and moments in the context of structural mechanics), this formula provides a means of recovering such quantities in a variationally consistent manner \cite{hughes2000continuous,van2012flux}.
\end{remark}

Nitsche's method exhibits several important properties that give rise to its stability and convergence.  Namely, it is \textit{\textbf{consistent}}, its bilinear form $a_h(\cdot,\cdot)$ is \textit{\textbf{symmetric}}, and, provided the map $\epsilon: \textup{dom}(\epsilon) \subseteq \mathcal{Q}^* \rightarrow \mathcal{Q}$ is chosen appropriately (see Assumption~\ref{assumption2} below), its bilinear form $a_h(\cdot,\cdot)$ is also \textit{\textbf{coercive}} on the discrete space $\mathcal{V}_h$.

\begin{lemma}[Consistency]
  \label{lemma:abstract_consistency}
  Suppose that Assumption~\ref{assumption1} holds and the solution $u$ of Problem $(M)$ satisfies $u \in \tilde{\mathcal{V}}$.  Then
  \begin{equation*}
    a_h(u,\delta u_h) = {}_{\mathcal{V}^*}\langle f, \delta u_h \rangle_{\mathcal{V}} - {}_{\mathcal{Q}^*}\langle \mathcal{B} \delta u_h, g \rangle_{\mathcal{Q}} + {}_{\mathcal{Q}^*}\langle \epsilon^{-1} \mathcal{T} \delta u_h, g \rangle_{\mathcal{Q}}
  \end{equation*}
  for all $\delta u_h \in \mathcal{V}_h$.

  \begin{proof} Since Assumption~\ref{assumption1} holds and $u$ is the solution to Problem $(M)$, it follows that
    \begin{align*}
      a_h(u,\delta u_h) &= a(u, \delta u_h) - {}_{\mathcal{Q}^*}\langle \mathcal{B} u, \mathcal{T}\delta u_h \rangle_{\mathcal{Q}} - {}_{\mathcal{Q}^*}\langle \mathcal{B} \delta u_h, \mathcal{T} u \rangle_{\mathcal{Q}} + {}_{\mathcal{Q}^*}\langle \epsilon^{-1} \mathcal{T} \delta u_h, \mathcal{T} u \rangle_{\mathcal{Q}} \nonumber \\
      &= {}_{\mathcal{V}^*}\langle \mathcal{L}u, \delta u_h \rangle_{\mathcal{V}} + {}_{\mathcal{Q}^*}\langle \mathcal{B}u, \mathcal{T}\delta u_h \rangle_{\mathcal{Q}} - {}_{\mathcal{Q}^*}\langle \mathcal{B} u, \mathcal{T}\delta u_h \rangle_{\mathcal{Q}} - {}_{\mathcal{Q}^*}\langle \mathcal{B} \delta u_h, \mathcal{T} u \rangle_{\mathcal{Q}} \nonumber \\ & \hspace{8pt} + {}_{\mathcal{Q}^*}\langle \epsilon^{-1} \mathcal{T} \delta u_h, \mathcal{T} u \rangle_{\mathcal{Q}} \nonumber \\
      &= {}_{\mathcal{V}^*}\langle f, \delta u_h \rangle_{\mathcal{V}} - {}_{\mathcal{Q}^*}\langle \mathcal{B} \delta u_h, g \rangle_{\mathcal{Q}} + {}_{\mathcal{Q}^*}\langle \epsilon^{-1} \mathcal{T} \delta u_h, g \rangle_{\mathcal{Q}}
    \end{align*}
    for all $\delta u_h \in \mathcal{V}_h$.
  \end{proof}
\end{lemma}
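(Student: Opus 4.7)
The plan is a direct verification: expand $a_h(u,\delta u_h)$ using its definition, apply the generalized Green's identity of Assumption~\ref{assumption1} to the $a(u,\delta u_h)$ term, cancel, and then invoke the strong-form identities $\mathcal{L}u = f$ and $\mathcal{T}u = g$. All of this is legitimate because the hypothesis $u \in \tilde{\mathcal{V}}$ puts us in the regime where Assumption~\ref{assumption1} applies; in particular, $\mathcal{B}u \in \mathcal{Q}^*$ is well-defined and \eqref{eq:int_by_parts} can be invoked with $w=u$.

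Concretely, I would proceed in four short steps. First, expand
\begin{equation*}
a_h(u,\delta u_h) = a(u,\delta u_h) - {}_{\mathcal{Q}^*}\langle \mathcal{B}u, \mathcal{T}\delta u_h\rangle_\mathcal{Q} - {}_{\mathcal{Q}^*}\langle \mathcal{B}\delta u_h, \mathcal{T}u\rangle_\mathcal{Q} + {}_{\mathcal{Q}^*}\langle \epsilon^{-1}\mathcal{T}\delta u_h, \mathcal{T}u\rangle_\mathcal{Q},
\end{equation*}
which makes sense because $u \in \tilde{\mathcal{V}}$ and $\delta u_h \in \mathcal{V}_h$, so the arguments lie in $\tilde{\mathcal{V}} + \mathcal{V}_h$ where $a_h$ is defined. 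Second, apply the generalized Green's identity with $w=u$ and $v=\delta u_h$ to rewrite $a(u,\delta u_h) = {}_{\mathcal{V}^*}\langle \mathcal{L}u, \delta u_h\rangle_\mathcal{V} + {}_{\mathcal{Q}^*}\langle \mathcal{B}u, \mathcal{T}\delta u_h\rangle_\mathcal{Q}$.

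Third, observe that the $\mathcal{B}u$ term just introduced cancels against the explicit consistency term $-{}_{\mathcal{Q}^*}\langle \mathcal{B}u,\mathcal{T}\delta u_h\rangle_\mathcal{Q}$ in the expansion. Fourth, use Assumption~\ref{assumption1} to substitute $\mathcal{L}u = f$, and use $u \in \mathcal{V}_g$ to substitute $\mathcal{T}u = g$ in the symmetry and penalty terms. Collecting the remaining three terms yields exactly the claimed identity.

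The argument is essentially a one-line manipulation, so there is no real obstacle — the only thing worth flagging is the conceptual point that every step hinges on the regularity hypothesis $u \in \tilde{\mathcal{V}}$: without it, neither $\mathcal{L}u$, $\mathcal{B}u$, nor the identity $\mathcal{L}u = f$ is guaranteed, and the consistency calculation cannot be carried out. It is worth noting as well that this result is strictly a statement about the continuous solution $u$; no properties of $\mathcal{V}_h$ other than $\mathcal{V}_h \subset \mathcal{V}$ enter, which is why the conclusion holds for every $\delta u_h \in \mathcal{V}_h$ without any further hypothesis on the discrete space.
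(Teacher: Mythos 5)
Your proposal is correct and follows exactly the same route as the paper's proof: expand $a_h(u,\delta u_h)$ by definition, apply the generalized Green's identity to the $a(u,\delta u_h)$ term, cancel the resulting $\mathcal{B}u$ boundary term against the consistency term, and substitute $\mathcal{L}u = f$ and $\mathcal{T}u = g$. Your additional remarks about the role of the regularity hypothesis $u \in \tilde{\mathcal{V}}$ and the fact that no structure on $\mathcal{V}_h$ beyond $\mathcal{V}_h \subset \mathcal{V}$ is needed are accurate observations, though not strictly required for the proof.
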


\begin{lemma}[Symmetry]
  \label{lemma:abstract_symmetry}
  It holds that
  \begin{equation*}
    a_h(w, v) = a_h(v, w)
  \end{equation*}
  for all $w, v \in \tilde{\mathcal{V}} + \mathcal{V}_h.$

  \begin{proof}
    The result follows by direct computation.
  \end{proof}
\end{lemma}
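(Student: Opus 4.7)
The plan is to verify symmetry of $a_h$ term by term using the four-part decomposition
$$
a_h(w,v) = a(w,v) - {}_{\mathcal{Q}^*}\langle \mathcal{B}w,\mathcal{T}v\rangle_{\mathcal{Q}} - {}_{\mathcal{Q}^*}\langle \mathcal{B}v,\mathcal{T}w\rangle_{\mathcal{Q}} + {}_{\mathcal{Q}^*}\langle \epsilon^{-1}\mathcal{T}v,\mathcal{T}w\rangle_{\mathcal{Q}}.
$$
The first term is symmetric by the hypothesis, stated in the setup of Problem $(M)$, that $a(\cdot,\cdot)$ is a symmetric bilinear form. The middle two terms together form a manifestly symmetric pair: the swap $w\leftrightarrow v$ merely interchanges the two summands, so their sum is unchanged. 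Consequently all of the real content reduces to the penalty term.

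For the penalty term I would invoke the self-adjointness of $\epsilon:\operatorname{dom}(\epsilon)\subseteq \mathcal{Q}^*\to\mathcal{Q}$, packaged as the symmetry of the associated stabilization form $S(\mu,\xi) = {}_{\mathcal{Q}^*}\langle \xi,\epsilon\mu\rangle_{\mathcal{Q}}$ introduced just before Problem $(L^s_h)$. Setting $\mu = \epsilon^{-1}\mathcal{T}v$ and $\xi = \epsilon^{-1}\mathcal{T}w$, so that $\epsilon\mu = \mathcal{T}v$ and $\epsilon\xi = \mathcal{T}w$, the identity $S(\mu,\xi) = S(\xi,\mu)$ collapses to
$$
{}_{\mathcal{Q}^*}\langle \epsilon^{-1}\mathcal{T}w,\mathcal{T}v\rangle_{\mathcal{Q}} = {}_{\mathcal{Q}^*}\langle \epsilon^{-1}\mathcal{T}v,\mathcal{T}w\rangle_{\mathcal{Q}},
$$
which is precisely the symmetry of the penalty contribution. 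Combining the three observations gives $a_h(w,v) = a_h(v,w)$.

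There is essentially no obstacle here: once the symmetry of $a$ and the self-adjointness of $\epsilon$ are in hand, the lemma is a direct unpacking of definitions, which is exactly what the author's terse \emph{``follows by direct computation''} reflects. The only bookkeeping I would verify is that $\epsilon^{-1}\mathcal{T}v$ and $\epsilon^{-1}\mathcal{T}w$ genuinely lie in $\operatorname{dom}(\epsilon)$ so that the $S$-identity is legal; this follows from the surjectivity and invertibility of $\epsilon$ together with the standing assumption that $\{\mathcal{B}v : v\in \tilde{\mathcal{V}}+\mathcal{V}_h\}\subset \operatorname{dom}(\epsilon)$, which ensures every ingredient of the penalty term resides in the correct space.
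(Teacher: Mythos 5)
Your proof is correct and is exactly the direct computation the paper's terse proof alludes to: symmetry of $a$, the manifest swap-invariance of the consistency/symmetry pair, and self-adjointness of $\epsilon$ for the penalty term. The only quibble is in your final domain check: $\epsilon^{-1}\mathcal{T}v\in\operatorname{dom}(\epsilon)$ follows immediately from $\epsilon^{-1}$ mapping $\mathcal{Q}$ into $\operatorname{dom}(\epsilon)$ by definition of the inverse, and the condition $\{\mathcal{B}v:v\in\tilde{\mathcal{V}}+\mathcal{V}_h\}\subset\operatorname{dom}(\epsilon)$ you cite is irrelevant to the penalty term.
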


To establish a coercivity result for Nitsche's method, we must make another assumption.

\begin{assumption}
  \label{assumption2}
There exists a densely defined, positive, self-adjoint linear map $\eta: \textup{dom}(\eta) \subseteq \mathcal{Q}^* \rightarrow \mathcal{Q}$ with the following properties:
\begin{enumerate}
\item The space $\left\{ \mathcal{B}v: v \in \tilde{\mathcal{V}} + \mathcal{V}_h \right\}$ is a subset of $\textup{dom}(\eta)$.
\item The following \textit{\textbf{generalized trace inequality}} holds:
\begin{equation*}
{}_{\mathcal{Q}^*}\langle \mathcal{B}v_h, \eta \mathcal{B}v_h \rangle_{\mathcal{Q}} \leq a(v_h,v_h)
\label{eqn:gen_trace}
\end{equation*}
for all $v_h \in \mathcal{V}_h$.
\item The following \textit{\textbf{generalized Cauchy-Schwarz inequality}} holds:
\begin{equation*}
\left| {}_{\mathcal{Q}^*}\langle \mathcal{B} v, \mathcal{T} w \rangle_{\mathcal{Q}} \right| \leq \frac{1}{\CSconstant} {}_{\mathcal{Q}^*}\langle \mathcal{B}v, \eta \mathcal{B}v \rangle^{1/2}_{\mathcal{Q}} {}_{\mathcal{Q}^*}\langle \epsilon^{-1} \mathcal{T} w, \mathcal{T} w \rangle^{1/2}_{\mathcal{Q}}
\label{eqn:gen_CS}
\end{equation*}
for all $v, w \in \tilde{\mathcal{V}} + \mathcal{V}_h$, where $\CSconstant \in (1,\infty)$.
\end{enumerate}
\end{assumption}

Now, defining an energy norm $\vvvertiii{\cdot}: \tilde{\mathcal{V}} + \mathcal{V}_h \rightarrow \mathbb{R}$ via
\begin{equation*}
\vvvertiii{v}^2 := a(v,v) + {}_{\mathcal{Q}^*}\langle \mathcal{B}v, \eta \mathcal{B}v \rangle_{\mathcal{Q}} + 2 {}_{\mathcal{Q}^*}\langle \epsilon^{-1} \mathcal{T} v, \mathcal{T} v \rangle_{\mathcal{Q}},
\end{equation*}
we can derive a coercivity result for Nitsche's method.

\begin{lemma}[Coercivity]
  \label{lemma:abstract_coercivity}
Suppose that Assumption~\ref{assumption2} holds.  Then
\begin{equation*}
a_h(v_h,v_h) \geq \frac{1}{2} \left(1 - \frac{1}{\CSconstant} \right) \vvvertiii{v_h}^2
\end{equation*}
for all $v_h \in \mathcal{V}_h$.

\begin{proof}
  Since Assumption~\ref{assumption2} holds, it follows that
\begin{align*}
a_h(v_h,v_h) &= a(v_h, v_h) - 2{}_{\mathcal{Q}^*}\langle \mathcal{B} v_h, \mathcal{T}v_h \rangle_{\mathcal{Q}} + {}_{\mathcal{Q}^*}\langle \epsilon^{-1} \mathcal{T} v_h, \mathcal{T} v_h \rangle_{\mathcal{Q}} \nonumber \\
& \geq a(v_h, v_h) - 2 \left| {}_{\mathcal{Q}^*}\langle \mathcal{B} v_h, \mathcal{T}v_h \rangle_{\mathcal{Q}} \right| + {}_{\mathcal{Q}^*}\langle \epsilon^{-1} \mathcal{T} v_h, \mathcal{T} v_h \rangle_{\mathcal{Q}} \nonumber \\
& \geq a(v_h, v_h) - \frac{2}{\CSconstant}  {}_{\mathcal{Q}^*}\langle \mathcal{B}v_h, \eta \mathcal{B}v_h \rangle^{1/2}_{\mathcal{Q}} {}_{\mathcal{Q}^*}\langle \epsilon^{-1} \mathcal{T} v_h, \mathcal{T} v_h \rangle^{1/2}_{\mathcal{Q}} + {}_{\mathcal{Q}^*}\langle \epsilon^{-1} \mathcal{T} v_h, \mathcal{T} v_h \rangle_{\mathcal{Q}} \nonumber \\
& \geq a(v_h, v_h) - \frac{1}{\CSconstant}  \left( {}_{\mathcal{Q}^*}\langle \mathcal{B}v_h, \eta \mathcal{B}v_h \rangle_{\mathcal{Q}} + {}_{\mathcal{Q}^*}\langle \epsilon^{-1} \mathcal{T} v_h, \mathcal{T} v_h \rangle_{\mathcal{Q}} \right) + {}_{\mathcal{Q}^*}\langle \epsilon^{-1} \mathcal{T} v_h, \mathcal{T} v_h \rangle_{\mathcal{Q}} \nonumber \\
& \geq a(v_h, v_h) - \frac{1}{\CSconstant}  \left( a(v_h, v_h) + {}_{\mathcal{Q}^*}\langle \epsilon^{-1} \mathcal{T} v_h, \mathcal{T} v_h \rangle_{\mathcal{Q}} \right) + {}_{\mathcal{Q}^*}\langle \epsilon^{-1} \mathcal{T} v_h, \mathcal{T} v_h \rangle_{\mathcal{Q}} \nonumber \\
& \geq \left( 1 - \frac{1}{\CSconstant} \right) a(v_h, v_h) + \left( 1 - \frac{1}{\CSconstant} \right)  {}_{\mathcal{Q}^*}\langle \epsilon^{-1} \mathcal{T} v_h, \mathcal{T} v_h \rangle_{\mathcal{Q}} \nonumber \\
& \geq \frac{1}{2} \left( 1 - \frac{1}{\CSconstant} \right) \left( a(v_h, v_h) + {}_{\mathcal{Q}^*}\langle \mathcal{B}v_h, \eta \mathcal{B}v_h \rangle_{\mathcal{Q}} \right) + \left( 1 - \frac{1}{\CSconstant} \right) {}_{\mathcal{Q}^*}\langle \epsilon^{-1} \mathcal{T} v_h, \mathcal{T} v_h \rangle_{\mathcal{Q}} \nonumber \\
& = \frac{1}{2} \left( 1 - \frac{1}{\CSconstant} \right) \vvvertiii{v_h}^2\nonumber
\end{align*}
for all $v_h \in \mathcal{V}_h$.  Young's inequality ($|ab| \leq \frac{1}{2}(a^2 + b^2)$ for $a,b \in \mathbb{R}$) is used here from line three to four.
\end{proof}

\end{lemma}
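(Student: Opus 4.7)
The plan is to expand $a_h(v_h,v_h)$ directly from its definition and then absorb the indefinite cross term $-2\,{}_{\mathcal{Q}^*}\langle \mathcal{B}v_h,\mathcal{T}v_h\rangle_{\mathcal{Q}}$ into the two positive contributions $a(v_h,v_h)$ and ${}_{\mathcal{Q}^*}\langle \epsilon^{-1}\mathcal{T}v_h,\mathcal{T}v_h\rangle_{\mathcal{Q}}$ using the three tools made available by Assumption~\ref{assumption2}: the generalized Cauchy--Schwarz inequality, Young's inequality, and the generalized trace inequality. The factor $\gamma>1$ is what creates the slack that ultimately leaves a strictly positive residual.

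First I would write
\begin{equation*}
a_h(v_h,v_h) = a(v_h,v_h) - 2\,{}_{\mathcal{Q}^*}\langle \mathcal{B}v_h,\mathcal{T}v_h\rangle_{\mathcal{Q}} + {}_{\mathcal{Q}^*}\langle \epsilon^{-1}\mathcal{T}v_h,\mathcal{T}v_h\rangle_{\mathcal{Q}},
\end{equation*}
pass to absolute values on the cross term, and apply the generalized Cauchy--Schwarz inequality to introduce the constant $1/\CSconstant$ together with the square roots of ${}_{\mathcal{Q}^*}\langle \mathcal{B}v_h,\eta\mathcal{B}v_h\rangle_{\mathcal{Q}}$ and ${}_{\mathcal{Q}^*}\langle \epsilon^{-1}\mathcal{T}v_h,\mathcal{T}v_h\rangle_{\mathcal{Q}}$. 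Young's inequality then splits the resulting product into a sum of these two squared quantities, each carrying a factor $1/(2\CSconstant)$, so overall the cross term is controlled by $\frac{1}{\CSconstant}\big({}_{\mathcal{Q}^*}\langle \mathcal{B}v_h,\eta\mathcal{B}v_h\rangle_{\mathcal{Q}} + {}_{\mathcal{Q}^*}\langle \epsilon^{-1}\mathcal{T}v_h,\mathcal{T}v_h\rangle_{\mathcal{Q}}\big)$.

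Next, I would apply the generalized trace inequality to replace ${}_{\mathcal{Q}^*}\langle \mathcal{B}v_h,\eta\mathcal{B}v_h\rangle_{\mathcal{Q}}$ by $a(v_h,v_h)$ inside the negative contribution. This collapses the estimate to the form $(1-1/\CSconstant)\,a(v_h,v_h) + (1-1/\CSconstant)\,{}_{\mathcal{Q}^*}\langle \epsilon^{-1}\mathcal{T}v_h,\mathcal{T}v_h\rangle_{\mathcal{Q}}$, in which both coefficients are strictly positive by hypothesis.

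The final step is a rebalancing to recover exactly the three terms composing $\vvvertiii{v_h}^2$. I would split $(1-1/\CSconstant)\,a(v_h,v_h)$ into two equal halves and apply the generalized trace inequality a \emph{second} time to one half, trading it for $\frac{1}{2}(1-1/\CSconstant)\,{}_{\mathcal{Q}^*}\langle \mathcal{B}v_h,\eta\mathcal{B}v_h\rangle_{\mathcal{Q}}$. Because the weight of ${}_{\mathcal{Q}^*}\langle \epsilon^{-1}\mathcal{T}v_h,\mathcal{T}v_h\rangle_{\mathcal{Q}}$ inside $\vvvertiii{\cdot}^2$ is $2$, the surviving coefficient $(1-1/\CSconstant)$ matches $\tfrac{1}{2}(1-1/\CSconstant)\cdot 2$ exactly, and collecting the three pieces yields $\tfrac{1}{2}(1-1/\CSconstant)\vvvertiii{v_h}^2$. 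The only real obstacle is bookkeeping: one factor of $2$ is consumed by Young's inequality and another by matching the weights in the energy norm, and these must close simultaneously, which is precisely why the trace inequality has to be invoked twice rather than once.
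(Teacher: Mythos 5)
Your proof is correct and follows essentially the same route as the paper's: expand $a_h(v_h,v_h)$, control the cross term via the generalized Cauchy--Schwarz inequality followed by Young's inequality, invoke the generalized trace inequality once to absorb $\langle \mathcal{B}v_h,\eta\mathcal{B}v_h\rangle$ into $a(v_h,v_h)$, and then apply it a second time after halving the $a(v_h,v_h)$ term to reassemble the energy norm. The bookkeeping observation you flag at the end (two distinct factors of $2$, one eaten by Young and one by the weight of the penalty term in $\vvvertiii{\cdot}^2$, necessitating two uses of the trace inequality) is exactly the crux of the paper's argument as well.
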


We need one more result before we can establish an error estimate for Nitsche's method.

\begin{lemma}[Continuity]
  \label{lemma:abstract_continuity}
Suppose that Assumption~\ref{assumption2} holds.  Then
\begin{equation*}
|a_h(w,v)| \leq \vvvertiii{w} \cdot \vvvertiii{v}
\end{equation*}
for all $w, v \in \tilde{\mathcal{V}} + \mathcal{V}_h$.

\begin{proof}
Since Assumption~\ref{assumption2} holds, it follows that
\begin{align*}
a_h(w,v) &= a(w, v) - {}_{\mathcal{Q}^*}\langle \mathcal{B} w, \mathcal{T}v \rangle_{\mathcal{Q}} - {}_{\mathcal{Q}^*}\langle \mathcal{B} v, \mathcal{T}w \rangle_{\mathcal{Q}} + {}_{\mathcal{Q}^*}\langle \epsilon^{-1} \mathcal{T} v, \mathcal{T} w \rangle_{\mathcal{Q}} \nonumber \\
& \leq a(w,v) + \left| {}_{\mathcal{Q}^*}\langle \mathcal{B} w, \mathcal{T}v \rangle_{\mathcal{Q}} \right| + \left| {}_{\mathcal{Q}^*}\langle \mathcal{B} v, \mathcal{T}w \rangle_{\mathcal{Q}} \right| + {}_{\mathcal{Q}^*}\langle \epsilon^{-1} \mathcal{T} v, \mathcal{T} w \rangle_{\mathcal{Q}} \nonumber \\
& \leq a(w, w)^{1/2} a(v,v)^{1/2} + {}_{\mathcal{Q}^*}\langle \mathcal{B}w, \eta \mathcal{B}w \rangle^{1/2}_{\mathcal{Q}} {}_{\mathcal{Q}^*}\langle \epsilon^{-1} \mathcal{T} v, \mathcal{T} v \rangle^{1/2}_{\mathcal{Q}} + {}_{\mathcal{Q}^*}\langle \mathcal{B}v, \eta \mathcal{B}v \rangle^{1/2}_{\mathcal{Q}} {}_{\mathcal{Q}^*}\langle \epsilon^{-1} \mathcal{T} w, \mathcal{T} w \rangle^{1/2}_{\mathcal{Q}} \nonumber \\
&\phantom{\leq} + {}_{\mathcal{Q}^*}\langle \epsilon^{-1} \mathcal{T} v, \mathcal{T} v \rangle^{1/2}_{\mathcal{Q}} {}_{\mathcal{Q}^*}\langle \epsilon^{-1} \mathcal{T} w, \mathcal{T} w \rangle^{1/2}_{\mathcal{Q}} \nonumber \\
& \leq \vvvertiii{w} \cdot \vvvertiii{v}
\end{align*}
for all $w,v \in \tilde{\mathcal{V}} + \mathcal{V}_h$. The standard Cauchy-Schwarz inequality $(|(x,y)| \leq \| x \|_2 \|y \|_2$ for $x,y \in \mathbb{R}^n)$ is used from line two to three above.
\end{proof}
\end{lemma}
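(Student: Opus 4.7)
The plan is to decompose $a_h(w,v)$ into its four defining summands, bound each by a product of a $w$-factor and a $v$-factor compatible with the pieces of the triple norm $\vvvertiii{\cdot}$, and finish with one application of the Cauchy-Schwarz inequality in $\mathbb{R}^4$.

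First, I would apply the triangle inequality to the absolute value to reduce the task to bounding
\begin{equation*}
|a(w,v)| + \left| {}_{\mathcal{Q}^*}\langle \mathcal{B}w,\mathcal{T}v\rangle_{\mathcal{Q}} \right| + \left| {}_{\mathcal{Q}^*}\langle \mathcal{B}v,\mathcal{T}w\rangle_{\mathcal{Q}} \right| + \left| {}_{\mathcal{Q}^*}\langle \epsilon^{-1}\mathcal{T}v,\mathcal{T}w\rangle_{\mathcal{Q}} \right|.
\end{equation*}
For the $a$ term I would use the standard Cauchy-Schwarz inequality associated with the symmetric positive semi-definite bilinear form $a$, obtaining $a(w,w)^{1/2} a(v,v)^{1/2}$. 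For the $\epsilon^{-1}$-weighted pairing I would note that since $\epsilon$ is densely defined, positive, and self-adjoint, so is $\epsilon^{-1}$, making $(x,y)\mapsto {}_{\mathcal{Q}^*}\langle \epsilon^{-1}x, y\rangle_{\mathcal{Q}}$ itself a symmetric positive bilinear form to which Cauchy-Schwarz applies directly. The two remaining cross terms are precisely what the generalized Cauchy-Schwarz inequality in Assumption~\ref{assumption2} was designed to control, and since the constant $1/\CSconstant$ is less than $1$, I can discard it while still preserving an upper bound.

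At this stage the right-hand side is a sum of four products, each of the form ``a piece in $w$'' times ``a piece in $v$'', and I would view it as the Euclidean inner product of two vectors in $\mathbb{R}^4$. A final Cauchy-Schwarz in $\mathbb{R}^4$, with the components interleaved so that the $\mathcal{B}w$-with-$\mathcal{T}v$ and $\mathcal{B}v$-with-$\mathcal{T}w$ contributions line up on opposite sides, collapses the sum into a product of two square roots, and a direct check identifies each with $\vvvertiii{\cdot}$. There is no real obstacle here: the argument is essentially bookkeeping combined with repeated Cauchy-Schwarz applications. The only point requiring care is that the factor of $2$ weighting the $\epsilon^{-1}\mathcal{T}\cdot$ term in the definition of $\vvvertiii{\cdot}^2$ is exactly what is needed to absorb both the diagonal $\epsilon^{-1}$ contribution and the two $\langle \epsilon^{-1}\mathcal{T}\cdot,\mathcal{T}\cdot\rangle^{1/2}$ factors spawned by the cross terms; this matching pins down the constants in the energy norm.
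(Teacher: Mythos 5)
Your proposal is correct and mirrors the paper's proof: bound the $a$ term by Cauchy-Schwarz for the PSD bilinear form, bound the two $\mathcal{B}$-$\mathcal{T}$ cross terms by the generalized Cauchy-Schwarz of Assumption~\ref{assumption2} (dropping the $1/\gamma<1$ prefactor), bound the $\epsilon^{-1}$ term by Cauchy-Schwarz for the positive pairing, and then recombine via a discrete Cauchy-Schwarz in $\mathbb{R}^4$ with the interleaved vectors $(a(w,w)^{1/2},\ {}_{\mathcal{Q}^*}\langle\mathcal{B}w,\eta\mathcal{B}w\rangle^{1/2}_{\mathcal{Q}},\ {}_{\mathcal{Q}^*}\langle\epsilon^{-1}\mathcal{T}w,\mathcal{T}w\rangle^{1/2}_{\mathcal{Q}},\ {}_{\mathcal{Q}^*}\langle\epsilon^{-1}\mathcal{T}w,\mathcal{T}w\rangle^{1/2}_{\mathcal{Q}})$ and its $v$-counterpart permuted so the cross terms match, which is exactly why the factor $2$ appears in $\vvvertiii{\cdot}^2$. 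Your observation about that factor of $2$ is precisely the bookkeeping the paper relies on, so the two arguments are the same up to presentation.
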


We are now ready to prove well-posedness and an error estimate for Nitsche's method.

\begin{theorem}[Well-Posedness and Error Estimate]
\label{theorem:error_estimate}
Suppose that Assumptions~\ref{assumption1} and~\ref{assumption2} hold.  Then there exists a unique discrete solution $u_h \in \mathcal{V}_h$ to Problem $(N_h)$.  Moreover, if the continuous solution $u \in \mathcal{V}$ to Problem $(M)$ satisfies $u \in \tilde{\mathcal{V}}$, then the discrete solution $u_h$ satisfies the error estimate
\begin{equation*}
\vvvertiii{u - u_h} \leq \left( 1+ \frac{2}{1-\frac{1}{\CSconstant}} \right) \min_{v_h \in \mathcal{V}_h} \vvvertiii{u - v_h}.
\end{equation*}

\begin{proof}
Well-posedness is a direct result of the Lax-Milgram Theorem and coercivity and continuity as established by Lemmas~\ref{lemma:abstract_coercivity} and \ref{lemma:abstract_continuity}.  To prove the error estimate, let $v_h \in \mathcal{V}_h$ be an arbitrary function.  Since Assumption~\ref{assumption2} holds, by Lemma~\ref{lemma:abstract_coercivity}, we have that
\begin{align*}
\vvvertiii{u_h - v_h}^2 \leq \frac{2}{1-\frac{1}{\CSconstant}} a\left(u_h - v_h, u_h - v_h\right).
\end{align*}
By Assumption~\ref{assumption1} and Lemma~\ref{lemma:abstract_consistency}, we have that
\begin{align*}
\vvvertiii{u_h - v_h}^2 \leq \frac{2}{1-\frac{1}{\CSconstant}} a\left(u - v_h, u_h - v_h\right).
\end{align*}
By Assumption~\ref{assumption2} and Lemma~\ref{lemma:abstract_continuity}, we have that
\begin{align*}
\vvvertiii{u_h - v_h}^2 \leq \frac{2}{1-\frac{1}{\CSconstant}} \vvvertiii{u - v_h} \vvvertiii{u_h - v_h}
\end{align*}
and, hence,
\begin{align*}
\vvvertiii{u_h - v_h} \leq \frac{2}{1-\frac{1}{\CSconstant}} \vvvertiii{u - v_h}.
\end{align*}
By the triangle inequality, we have that
\begin{align*}
\vvvertiii{u - u_h} &\leq \vvvertiii{u - v_h} + \vvvertiii{u_h - v_h} \nonumber
\leq  \left( 1+ \frac{2}{1-\frac{1}{\CSconstant}} \right) \vvvertiii{u - v_h}
\end{align*}
and, since $v_h \in \mathcal{V}_h$ is arbitrary, the final result holds.
\end{proof}
\end{theorem}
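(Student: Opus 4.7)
The plan is to combine the four properties established in Lemmas 2--5 in a fairly standard Céa-type argument, treating the existence/uniqueness claim and the quasi-optimality estimate separately but using the same key ingredients.

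For the well-posedness claim, I would invoke the Lax--Milgram theorem on the finite-dimensional Hilbert space $\mathcal{V}_h$ equipped with the energy norm $\vvvertiii{\cdot}$. Lemma~\ref{lemma:abstract_continuity} (with $w,v \in \mathcal{V}_h$) directly gives boundedness of $a_h(\cdot,\cdot)$, while Lemma~\ref{lemma:abstract_coercivity} gives coercivity with constant $\tfrac{1}{2}(1-\tfrac{1}{\CSconstant})>0$, which is strictly positive by the assumption $\CSconstant \in (1,\infty)$. One must also check that the right-hand side defines a bounded linear functional on $\mathcal{V}_h$, which follows from $f \in \mathcal{V}^*$, the boundedness of $\mathcal{T}$ (hence of $\epsilon^{-1}\mathcal{T}$ in the appropriate duality) and the fact that $\mathcal{B}$ is defined on $\tilde{\mathcal{V}} + \mathcal{V}_h$ with $g \in \mathcal{Q}$.

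For the error estimate, I would fix an arbitrary $v_h \in \mathcal{V}_h$ and control $\vvvertiii{u_h - v_h}$ by testing the coercive bilinear form against itself. Concretely, coercivity from Lemma~\ref{lemma:abstract_coercivity} gives
\begin{equation*}
\tfrac{1}{2}\bigl(1-\tfrac{1}{\CSconstant}\bigr)\vvvertiii{u_h - v_h}^2 \le a_h(u_h - v_h, u_h - v_h).
\end{equation*}
Then I would produce a Galerkin-orthogonality identity: subtracting the consistency statement of Lemma~\ref{lemma:abstract_consistency} from the definition of Problem $(N_h)$ with the same test function yields $a_h(u - u_h, \delta u_h) = 0$ for every $\delta u_h \in \mathcal{V}_h$. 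This lets me replace $a_h(u_h - v_h, u_h - v_h)$ by $a_h(u - v_h, u_h - v_h)$ (both $u$ and $v_h$ live in $\tilde{\mathcal{V}} + \mathcal{V}_h$, which is exactly the space on which $a_h$ is defined, so this step is legitimate). Continuity from Lemma~\ref{lemma:abstract_continuity} then gives
\begin{equation*}
\vvvertiii{u_h - v_h} \le \frac{2}{1-\tfrac{1}{\CSconstant}}\,\vvvertiii{u - v_h},
\end{equation*}
and a triangle inequality followed by the minimum over $v_h \in \mathcal{V}_h$ produces the stated constant $1 + \tfrac{2}{1-1/\CSconstant}$.

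The main obstacle, as I see it, is bookkeeping around the enlarged space $\tilde{\mathcal{V}} + \mathcal{V}_h$ rather than any deep estimate: one must verify that $a_h$, $\mathcal{B}$, and $\epsilon^{-1}\mathcal{T}$ are all well-defined on every element that appears in each step (in particular on $u - v_h$, which mixes continuous and discrete regularity), and that the consistency identity of Lemma~\ref{lemma:abstract_consistency} genuinely holds for the specific test functions used to derive Galerkin orthogonality. Once these domain issues are handled, every inequality is a direct citation of one of the four preceding lemmas, and the constant $1 + \tfrac{2}{1-1/\CSconstant}$ emerges automatically from combining the coercivity constant $\tfrac{1}{2}(1-\tfrac{1}{\CSconstant})$ with a triangle inequality.
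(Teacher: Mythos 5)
Your proposal is correct and follows the same Céa-type argument as the paper: coercivity of $a_h$ bounds $\vvvertiii{u_h - v_h}^2$, Galerkin orthogonality derived from the consistency lemma swaps $u_h$ for $u$ in the first argument, continuity closes the estimate, and a triangle inequality plus minimizing over $v_h$ finishes. If anything you are slightly more careful than the paper's displayed proof, which writes $a(\cdot,\cdot)$ in the intermediate inequalities where the Nitsche form $a_h(\cdot,\cdot)$ is clearly intended, and you correctly flag that every quantity must live on $\tilde{\mathcal{V}} + \mathcal{V}_h$, the domain on which $a_h$ and $\mathcal{B}$ are defined.
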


Note that the above theorem applies to any formulation and problem setup for which Assumptions~\ref{assumption1} and~\ref{assumption2} hold.  Consequently, constructing Nitsche-based formulations for a new problem class should proceed according to the following steps:\\

\noindent \textbf{Step 1:} Construct an appropriate variational formulation (including specification of the Hilbert spaces $\mathcal{V}$ and $\mathcal{Q}$, the map $\mathcal{T}: \mathcal{V} \rightarrow \mathcal{Q}$, and the bilinear form $a : \mathcal{V} \times \mathcal{V} \rightarrow \mathbb{R}$) such that Assumption~\ref{assumption1} is satisfied, and determine the space $\tilde{\mathcal{V}}$ and the linear maps $\mathcal{L} : \tilde{\mathcal{V}} \rightarrow \mathcal{V}^*$ and $\mathcal{B} : \tilde{\mathcal{V}} \rightarrow \mathcal{Q}^*$ associated with Assumption~\ref{assumption1}. Note that the relevant operators will ultimately be defined over the extended domain $\tilde{\mathcal{V}} + \mathcal{V}_h$ for discretization.\\

\noindent \textbf{Step 2:} Construct suitable linear maps $\epsilon: \textup{dom}(\epsilon) \subseteq Q^* \rightarrow Q$ and $\eta: \textup{dom}(\eta) \subseteq Q^* \rightarrow Q$ such that Assumption~\ref{assumption2} is satisfied.\\

\noindent \textbf{Step 3:} Pose Nitsche's method according to Problem $(N_h)$.\\

In the following, we complete the above three steps to construct a new Nitsche-based formulation for the linear Kirchhoff-Love shell.  Note that we do not need to conduct a full stability and convergence analysis, since we can readily employ the abstract framework presented here.  It should further be mentioned that symmetry can be employed to arrive at error estimates in norms other than the energy norm using the well-known Aubin-Nitsche trick \cite{ciarlet1991basic}.  To complete our analysis, and in particular to construct the maps $\epsilon$ and $\eta$ such that Assumption~\ref{assumption2} is satisfied, we need to use function analytic results such as trace inequalities. We discuss later how to compute trace constants in a practical manner.  Finally, to ensure that our results are invariant with respect to scaling, we take special care in constructing the maps $\epsilon$ and $\eta$ so that the resulting energy norm is dimensionally consistent.  This requires a little finesse and rigor, but we believe that arriving at scale-invariant error estimates is worth the added effort.


\section{Nitsche's Method for the Linear Kirchhoff-Love Shell Problem}
\label{sec:KL_Shell}

Our abstract framework provides a convenient means for constructing and analyzing Nitsche-based formulations for problems of interest, regardless of their complexity. In this section, we apply our framework to the vector-valued, fourth-order PDE that governs the linear Kirchhoff-Love shell to arrive at a provably convergent Nitsche-based formulation. We also derive and discuss what are known as the \textbf{\emph{ersatz forces}}, or modified boundary shear forces used to maintain variational consistency of the Nitsche formulation. These are either incorrect or incomplete in the existing literature. Because we only consider the linear case in what follows, we drop ``linear'' from ``linear Kirchhoff-Love shell'' in this and subsequent sections.

In the following, underline and double underline ($\surfVec{\bullet}$ and $\surfTens{\bullet}$) are used to denote manifold quantities, that is, quantities that can be expressed through a linear combination of tensorial quantities lying in the tensor bundle of the manifold, with the number of underlines indicating the order of the tensor. By contrast, \fullTens{bold-faced text} denotes quantities residing in three-dimensional space. The concepts presented in this section, and those that follow, rely heavily on differential geometry and continuum mechanics posed over differentiable manifolds. For a brief discussion of the necessary differential-geometric subjects, see to \ref{sec:Appendix_Diff_Geo} and for a review of continuum mechanics, see to \ref{sec:Appendix_Cont_Mech}.

\begin{figure}[ht!]
\includegraphics[width=\textwidth]{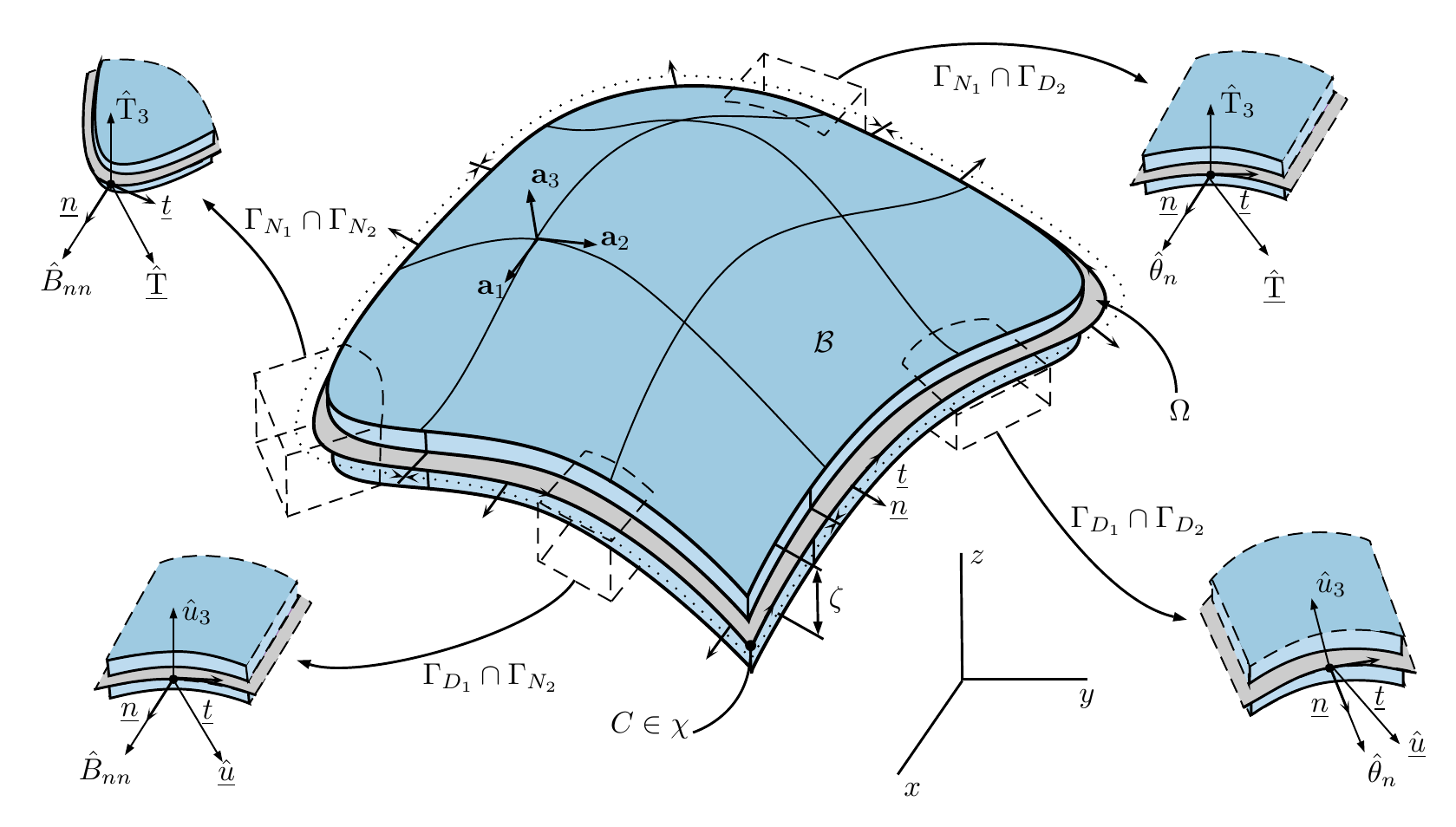}
\caption{An arbitrary shell domain. All positive conventions for degrees of freedom and applied loadings are depicted.}
\label{fig:KL_shell_domain}
\end{figure}

Shell models simulate the structural response of curved, load-bearing members subject to both in-plane and out-of-plane loadings. They are idealized through a midsurface model with linearized through-thickness displacement profiles, where we use $\thickness$ to denote the thickness variable. The midsurface is chosen to be the surface midway through the thickness of the shell body. In special cases, namely, small strains and displacements and shells comprised of an isotropic material, the midsurface coincides with the neutral plane, that is, the plane that undergoes no compressive or tensile forces due to bending. A general shell model employs a displacement variable, denoted ${\bf u}$, as well as a rotational degree of freedom, denoted $\surfVec{\theta}$. The Kirchhoff-Love shell displacement field is assumed to be free of transverse shear strain. Consequently, this introduces a constraint between the rotational and displacement degrees of freedom, namely, $\surfVec{\theta}({\bf u}) = - \aVec_{3} \cdot \surfVec{\nabla} {\bf u}$, which appears later in our derivations. We integrate through-thickness before discretization, which introduces a $\thickness$-dependence in the expression for membrane action and a $\thickness^3$-dependence in the expression for bending action due to a zero-transverse shear strain constraint imposed on the displacement variable. It is this discrepancy between thickness-dependence in the presence of intrinsic curvature coupling that gives rise to \textbf{\emph{membrane locking}}, a parasitic numerical phenomenon that causes little-to-no displacement for thin shells in specific configurations until sufficient mesh resolution is attained.


\subsection{The Variational Formulation}

Let $\physDomain \subset \mathbb{R}^3$ be an immersed two-dimensional manifold with Lipschitz-continuous boundary $\physBoundary = \partial \physDomain$. Assume that $\physDomain$ is smooth enough that the derivatives of the curvature are finite. Note that less-smooth manifolds are admissible to the methodology we present in this section; however, special care must be taken in regions without appropriate smoothness and, hence, we invoke this assumption for simplicity of exposition. Since the Kirchhoff-Love shell accommodates both prescribed displacements and rotations as well as their energetically conjugate shears and moments on the boundary, we partition the boundary accordingly. In particular, let $\physBoundary_{D_1}$ and $\physBoundary_{N_1}$ be the Dirichlet-1 and Neumann-1 boundaries associated with prescribed transverse displacements and applied transverse shears, respectively. Let $\physBoundary_{D_2}$ and $\physBoundary_{N_2}$ be the Dirichlet-2 and Neumann-2 boundaries associated with prescribed normal rotations and applied bending moments, respectively. For a well-posed PDE, we require that $\physBoundary = \overline{\physBoundary_{D_\alpha} \cup \physBoundary_{N_\alpha}}$, $\physBoundary_{D_\alpha} \cap \physBoundary_{N_\alpha} = \emptyset$, and $\physBoundary_{D_\alpha} \neq \emptyset$ for $\alpha = 1,2$. Note that there are no constraints between the 1- and 2-boundaries because there is no energetic exchange between the two sets. In general, $\physBoundary_{D_\alpha} \neq \emptyset$ for $\alpha = 1,2$ is not necessary, e.g., $\physBoundary_{D_2}$ can be empty provided sufficient conditions are imposed on $\physBoundary_{D_1}$; however, for simplicity, we make the broader assumption. We further introduce the set $\cornerSet{} \subset \physBoundary$ as the set of ``corners'', that is, the non-differentiable loci with zero-Lebesgue measure, along the boundary.  We further decompose this set into $\cornerSet{D} := \cornerSet{} \cap \overline{\physBoundary_{D_1}}$ and $\cornerSet{N} := \cornerSet{} \cap \physBoundary_{N_1}$ and note that, by construction, $\cornerSet{} = \cornerSet{D} \cup \cornerSet{N}$ and $\cornerSet{D} \cap \cornerSet{N} = \emptyset$.  We denote corners as $C \in \cornerSet{}$.

Given a geometric mapping ${\bf x}$ from a parametric domain to the midsurface $\physDomain$, we are able to construct a \textbf{\emph{covariant}} coordinate basis through the derivatives of the convected coordinates. In particular, the $\alpha^{th}$ covariant basis vector is given by $\aVec_\alpha = {\bf x}_{,\alpha}$, where the comma notation refers to differentiation of the geometric mapping with respect to the $\alpha^{th}$ coordinate. The midsurface normal director $\aVec_3$ can be constructed through a cross product of these in-plane vectors and, provided the geometric mapping ${\bf x}$ is non-degenerate, the resulting covariant set can be shown to form a basis of $\mathbb{R}^3$. According to differential-geometric theory, we can uniquely construct an algebraically dual set of \textbf{\emph{contravariant}} basis vectors to this set denoted $\aVec^\alpha$, which, by definition, satisfy the Kronecker relationship $\aVec_\alpha \cdot \aVec^\beta = \delta^\beta_\alpha$. Note by construction that $\aVec_3 = \aVec^3$. For a deeper discussion of the required differential-geometric tools, see \ref{sec:Appendix_Diff_Geo}. These contravariant basis vectors allow us to effectively combine the in-plane and out-of-plane behaviors via ${\bf w} = \surfVec{w} + w_3 \aVec^3$, where $\surfVec{w} = w_\alpha \aVec^\alpha$.  Later in this section, we will invoke the \textit{\textbf{in-plane projector}} $\surfTens{\Projector} := \textbf{I} - \aVec^3 \otimes \aVec_3$ that, when acting on a vector, returns the in-plane part of that vector, where $\textbf{I}$ is the identity tensor. Note that $\surfTens{\Projector}$ is symmetric and thus also satisfies the definition $\surfTens{\Projector} = {\bf I} - \aVec_3 \otimes \aVec^3$. Finally, we present various quantities defined over manifolds with their required regularities in terms of Sobolev embeddings. Since these spaces are defined over manifolds, we present what this entails more rigorously in Section~\ref{sec:apriori}.

Let $\applied{\textbf{\forcing}} \in \left( L^2(\physDomain) \right)^3$ be the applied body loading, $\applied{\bf u} = \applied{\surfVec{u}} + \applied{u}_3 \aVec^3$ such that $(\applied{u}_1,\applied{u}_2) \in \left( H^{1/2}(\physBoundary_{D_1}) \right)^2$ and $\applied{u}_3 \in H^{3/2}(\physBoundary_{D_1})$ are the prescribed displacement, and let $\applied{\theta}_n \in H^{1/2}(\physBoundary_{D_2})$ be the prescribed normal rotation. In general, ``hat'' notation $(\applied{\bullet})$ is used to denote a quantity that is prescribed or applied. Note that, by Sobolev embedding, $\applied{u}_3 \in C^0(\physBoundary_{D_1})$. Given an applied traction $\applied{\bm{\traction}} \colon \physBoundary_{N_1} \rightarrow \mathbb{R}^3$ and an applied twisting moment $\applied{\moment}_{nt} \colon \physBoundary_{N_1} \rightarrow \mathbb{R}$, define the \textbf{\emph{ersatz traction}} via
\begin{equation}
  \applied{\bf T} = \underbrace{ \applied{\surfVec{\traction}} - \applied{\moment}_{nt} \surfTens{\SFF} \cdot \surfVec{\bdyTangent} }_{\displaystyle \surfVec{\applied{\ersatz}}} + \underbrace{ \left[ \shear + \frac{\partial \applied{\moment}_{nt}}{\partial \bdyTangent} \right] }_{\displaystyle \applied{\ersatz}_3} \aVec^{3},
  \label{eqn:Ersatz_traction_KLS}
\end{equation}
where the term $\surfTens{b}$ is the second fundamental form, or \textbf{\emph{curvature tensor}} \eqref{eqn:secondFF}, associated with the manifold. The corresponding \textbf{\emph{corner forces}} are defined via
\begin{equation*}
  \applied{\force} = \llbracket \applied{\moment}_{nt} \rrbracket,
\end{equation*}
where
\begin{equation}
\llbracket \applied{\moment}_{nt} \rrbracket = \lim_{\epsilon \rightarrow 0} \left( \applied{\moment}_{nt}(\textbf{x} + \epsilon \surfVec{\bdyTangent}) - \applied{\moment}_{nt}(\textbf{x} - \epsilon \surfVec{\bdyTangent}) \right)
\label{eqn:jumpDef}
\end{equation}
and $\surfVec{\bdyTangent}$ is the positively oriented, counter-clockwise unit tangent vector to $\physBoundary$. The corner forces and the ersatz traction arise from the integration-by-parts formula
\begin{equation*}
  \begin{aligned}
    \int_{\physBoundary_{N_1}} \applied{\moment}_{nt} \midsurfRot_t({\bf v}) \ d \physBoundary &= \int_{\physBoundary_{N_1}} \outOfPlane{v} \frac{\partial \applied{\moment}_{nt}}{\partial \bdyTangent} \ d \physBoundary + \sum_{C \in \cornerSet{N}} \left. \left( \llbracket \applied{\moment}_{nt} \rrbracket \outOfPlane{v} \right) \right|_{C} - \int_{\physBoundary_{N_1}} \left( \applied{\moment}_{nt} \ \surfTens{b} \cdot \surfVec{\bdyTangent} \right) \cdot \surfVec{v} \ d \physBoundary
  \end{aligned}
\label{eqn:Ersatz_IBP_KLS}
\end{equation*}
for any ${\bf v} \colon \physBoundary_{N_1} \rightarrow \mathbb{R}^3$ with $\left. v_3 \right|_{\partial \physBoundary_{N_1}} = 0$, where $\midsurfRot_t({\bf v}) = - \left( \aVec_3 \cdot \surfVec{\nabla} {\bf v} \right) \cdot \surfVec{t}$ is the \textbf{\emph{twisting rotation}} and $\surfVec{t}$ is again the positively oriented unit tangent vector to $\physBoundary$. In contrast to the boundary traction and twisting moment, the ersatz traction and corner forces are energetically conjugate to the boundary displacement, so they are the the natural entities to use in our derivation of Nitsche's method for the Kirchhoff-Love shell through our abstract framework (see Remark~\ref{alternative_f_KLS} below). Assume that $\applied{\bf T} \in \left( L^2(\physBoundary_{N_1}) \right)^3$ and $\left\{ \applied{\force}|_{C} \right\}_{C \in \cornerSet{N}} \in \mathbb{R}^{\#\cornerSet{N}}$. Finally, let $\applied{\moment}_{nn} \in L^2(\physBoundary_{N_2})$ be the applied bending moment that is energetically conjugate to the boundary rotation.

Throughout the remainder of the paper, we use superscript $\KLS$ to denote quantities associated with the Kirchhoff-Love shell problem to differentiate them from those in the abstract framework. In order to apply the abstract results from Section~\ref{sec:Nitsche} to the Kirchhoff-Love shell, let
\begin{equation*}
  \mathcal{V}^{\KLS} := \left\{ {\bf v} = \surfVec{v} + v_3 \aVec^3 \ \colon \ (v_1,v_2) \in \left( H^1(\physDomain) \right)^2 \hspace{5pt} \text{and} \hspace{5pt} v_3 \in H^2(\physDomain) \right\}
\end{equation*}
and
\begin{equation*}
  \mathcal{Q}^{\KLS} := \left\{ ({\bf v},\mu_n) \ \colon \ (v_1,v_2) \in \left( H^{1/2}(\physBoundary_{D_1}) \right)^2, \ v_3 \in H^{3/2}(\physBoundary_{D_1}), \hspace{5pt} \text{and} \hspace{5pt} \mu_n \in H^{1/2}(\physBoundary_{D_2}) \right\}.
\end{equation*}
These spaces are selected in this way to accommodate the required smoothness of a weak solution to the underlying PDE. In particular, $\mathcal{V}^{\KLS}$ is constructed such that members of $\mathcal{V}^{\KLS}$ have one integrable derivative in-plane and two integrable derivatives out-of-plane, respectively. Accordingly, $\mathcal{Q}^{\KLS}$ is the corresponding trace space that outlines the necessary smoothness for the applied displacement field and normal rotation field along the Dirichlet boundary. As such, define the trace operator $\mathcal{T}^{\KLS} \colon \mathcal{V}^{\KLS} \rightarrow \mathcal{Q}^{\KLS}$ via its action on the displacement field ${\bf v} \in \mathcal{V}^{\KLS}$, i.e., $\mathcal{T}^{\KLS} {\bf v} = \left. \left( {\bf v}, \midsurfRot_n({\bf v}) \right) \right|_{\physBoundary_D}$, where $\midsurfRot_n({\bf v}) = - \left( \aVec_3 \cdot \surfVec{\nabla} {\bf v} \right) \cdot \surfVec{n}$ is the \textbf{\emph{normal rotation}} and $\surfVec{n}$ is the outward-facing unit normal to $\physBoundary$. Given $\left( \applied{\bf u}, \applied{\midsurfRot}_n \right) \in \mathcal{Q}^{\KLS}$, define
\begin{equation*}
  \mathcal{V}^{\KLS}_{\applied{\bf u},\applied{\midsurfRot}_n} := \left\{ {\bf v} \in \mathcal{V}^{\KLS} \colon \mathcal{T}^{\KLS} {\bf v} = \left( \applied{\bf u},\applied{\midsurfRot}_n \right) \right\}
\end{equation*}
as the trial space of displacement fields satisfying the prescribed Dirichlet boundary conditions. To this end, $\mathcal{V}^{\KLS}_{{\bf 0},0}$ denotes the test space of displacement fields satisfying homogeneous Dirichlet boundary conditions, particularly $\applied{\bf u} = {\bf 0}$ and $\applied{\midsurfRot}_n = 0$.

Given external loadings and boundary conditions, we introduce the corresponding strain and stress measures that serve as our proxy for the resulting displacement field. For ${\bf w} \in \mathcal{V}^{\KLS}$, the midsurface rotation is given by the negative gradient of this displacement variable projected onto the midsurface normal director through the Kirchhoff-Love kinematical assumption \eqref{eqn:KL_constraint}, namely, $\surfVec{\theta}({\bf w}) = - \aVec_3 \cdot \surfVec{\nabla} {\bf w}$. This is readily seen by setting the transverse shear strain to zero and solving algebraically for $\surfVec{\theta}$ in terms of ${\bf w}$ in Table~\ref{table:VariousStrains}. The \textbf{\emph{membrane strain}} \eqref{eqn:mem_strain} is defined as $\surfTens{\memStrain}({\bf w}) := \surfTens{\Projector} \cdot \Sym\left( \surfVec{\nabla} {\bf w} \right) \cdot \surfTens{\Projector}$, where the operator $\Sym(\cdot)$ returns the symmetric part of the displacement gradient, in particular, $\Sym \left( \surfVec{\nabla} \ {\bf w} \right) := \frac{1}{2}\left[ \left( \surfVec{\nabla} \ {\bf w} \right) + \left( \surfVec{\nabla} \ {\bf w} \right)^T \right]$. The \textbf{\emph{membrane stress} \eqref{eqn:mem_stress}} is defined via $\surfTens{\memStress}({\bf w}) := \thickness \mathbb{\constitutive} \colon \surfTens{\memStrain}({\bf w})$, that is, the composition of the membrane strain with the elasticity tensor. Analogously, the \textbf{\emph{bending strain}} \eqref{eqn:bend_strain} is defined as $\surfTens{\bendStrain}({\bf w}) := - \surfTens{\Projector} \cdot \Sym\left( \aVec_3 \cdot \surfVec{\nabla} \ \surfVec{\nabla} {\bf w} \right) \cdot \surfTens{\Projector}$ and the \textbf{\emph{bending stress} \eqref{eqn:bend_stress} }is defined as $\surfTens{\bendStress}({\bf w}) := \frac{\thickness^3}{12}\mathbb{C} \colon \surfTens{\bendStrain}({\bf w})$. The \textbf{\emph{surface gradient}}, which we denote $\surfVec{\nabla}$, is defined in \eqref{eqn:surfGradDef}.

\begin{remark}
  It can be shown that the magnitude of $\mathbb{\constitutive}$ is given by
  \begin{equation*}
    | \mathbb{\constitutive} |^2 = \mathbb{\constitutive}^{\alpha\beta\lambda\mu} \mathbb{\constitutive}_{\alpha\beta\lambda\mu} =  \frac{3 \nu^2 - 2 \nu + 3}{\left( 1-\nu^2 \right)^2} E^2,
  \end{equation*}
  where $E$ is Young's modulus and $\nu$ is Poisson's ratio. Since $0 \le \nu \le \frac{1}{2}$, it follows that $| \mathbb{\constitutive} |^2 \le \frac{44}{9} E^2$.
\end{remark}

We are interested in the following variational constrained minimization problem for the Kirchhoff-Love shell:
$$
(M^{\KLS}) \left\{ \hspace{5pt}
\parbox{6.00in}{
\noindent Find ${\bf u} \in \mathcal{V}^{\KLS}_{\applied{\bf u},\applied{\theta}_n}$ that minimizes the total energy
\begin{eqnarray*}
  E^{\KLS}_\text{total}({\bf u}) = E^{\KLS}_\text{int}({\bf u}) + E^{\KLS}_\text{ext}({\bf u}),
\end{eqnarray*}
where
\begin{equation}
  E^{\KLS}_\text{int}({\bf v}) = \underbrace{ \frac{1}{2} \int_{\physDomain} \surfTens{\memStress}({\bf v}) \colon \surfTens{\memStrain}({\bf v}) \ d \physDomain}_{\text{Membrane Energy}} + \underbrace{ \frac{1}{2} \int_{\physDomain} \surfTens{\bendStress}({\bf v}) \colon \surfTens{\bendStrain}({\bf v}) \ d \physDomain}_{\text{Bending Energy}}
  \label{eqn:E_int_S}
\end{equation}
is the internal strain energy due to both membrane and bending effects and
\begin{equation}
  E^{\KLS}_\text{ext}({\bf v}) = -\int_{\physDomain} \applied{\textbf{\forcing}} \cdot {\bf v} \ d \physDomain - \int_{\physBoundary_{N_1}} \applied{\bf T} \cdot {\bf v} \ d \physBoundary - \sum_{C \in \cornerSet{N}} \left. \left( \applied{\force} \outOfPlane{v} \right) \right|_{C} - \int_{\physBoundary_{N_2}} \applied{\moment}_{nn} \midsurfRot_n({\bf v}) \ d \physBoundary
  \label{eqn:E_ext_S}
\end{equation}
is the external energy due to applied loadings.
}
\right.
$$

\noindent We define an associated bilinear form $a^{\KLS}(\cdot,\cdot) \colon \mathcal{V}^{\KLS} \times \mathcal{V}^{\KLS} \rightarrow \mathbb{R}$ as twice the shell strain energy
\begin{equation*}
  a^{\KLS}({\bf w},{\bf v}) := \int_{\physDomain} \surfTens{\memStress}({\bf w}) \colon \surfTens{\memStrain}({\bf v}) \ d \physDomain + \int_{\physDomain} \surfTens{\bendStress}({\bf w}) \colon \surfTens{\bendStrain}({\bf v}) \ d \physDomain
\end{equation*}
for all ${\bf w},{\bf v} \in \mathcal{V}^{\KLS}$. The linear functional $\linFunctional^{\KLS} \in \left( \mathcal{V}^{\KLS} \right)^*$ is defined via
\begin{equation*}
\begin{aligned}
 \left\langle \linFunctional^{\KLS}, {\bf v} \right\rangle = \int_{\physDomain} \applied{\textbf{\forcing}} \cdot {\bf v} \ d \physDomain + \int_{\physBoundary_{N_1}} \applied{\bf T} \cdot {\bf v} \ d \physBoundary + \sum_{C \in \cornerSet{N}} \left. \left( \applied{\force} \outOfPlane{v} \right) \right|_{C} + \int_{\physBoundary_{N_2}} \applied{\moment}_{nn} \midsurfRot_n({\bf v}) \ d \physBoundary
\end{aligned}
\end{equation*}
for all ${\bf v} \in \mathcal{V}^{\KLS}$. Therefore, the solution to Problem $(M^{\KLS})$ is also the solution to the following variational problem:
$$
(V^{\KLS}) \left\{ \hspace{5pt}
\parbox{6.00in}{
\noindent Find ${\bf u} \in \mathcal{V}^{\KLS}_{\applied{\bf u},\applied{\theta}_n}$ such that
\begin{eqnarray*}
a^{\KLS}({\bf u},\delta {\bf u}) = \left\langle \linFunctional^{\KLS}, \delta {\bf u} \right\rangle
\label{eqn:Shell_weak}
\end{eqnarray*}
\noindent for every $\delta {\bf u} \in \mathcal{V}^{\KLS}_{{\bf 0},0}$.
}
\right.
$$

Note that the bilinear form $a^{\KLS}(\cdot,\cdot)$ is symmetric and positive semi-definite, and the kernel consists of constant and linear functions that are the rigid-body modes of the shell. Furthermore, note that $a^{\KLS}(\cdot,\cdot)$ is coercive on $\mathcal{V}^{\KLS}_{{\bf 0},0}$ (i.e., the kernel of $\mathcal{T}^{\KLS}$) with respect to the induced norm (see \cite[Theorem 4.3-4]{Ciarlet2005}). The Lax-Milgram Theorem guarantees that Problem $(V^{\KLS})$ has a unique solution ${\bf u} \in \mathcal{V}^{\KLS}$ that depends continuously on the external loading $\linFunctional^{\KLS} \in \left( \mathcal{V}^{\KLS} \right)^*$ and the boundary data $\left( \applied{\bf u}, \applied{\midsurfRot}_n \right) \in \mathcal{Q}^{\KLS}$, $\applied{\bf T} \in \left[ L^2(\physBoundary_{N_1}) \right]^3$, $\left\{ \applied{\force}|_{C} \right\}_{C \in \cornerSet{N}} \in \mathbb{R}^{\#\cornerSet{N}}$, and $\applied{\moment}_{nn} \in L^2(\physBoundary_{N_2})$.

\begin{remark}
  Often when dealing with homogeneous boundary conditions, it is convenient to split the domain boundary in to four disjoint sets, i.e., $\Gamma = \Gamma_C \cup \Gamma_{SS} \cup \Gamma_S \cup \Gamma_F$, where $\Gamma_C$ is the clamped portion, $\Gamma_{SS}$ is the simply supported portion, $\Gamma_S$ is the symmetric portion, and $\Gamma_F$ is the free portion. Physically, these boundary segments are summarized in the following:
  \begin{equation*}
    \begin{array}{llllll}
      \text{(clamped)} & \hat{\bf u} = {\bf 0} ,& \hat{\theta}_n = 0 & & &\text{on} \ \Gamma_{C} := \Gamma_{D_1} \cap \Gamma_{D_2}\\
      \text{(simply supported)}&\hat{\bf u} = {\bf 0} ,& & & \hat{B}_{nn} = 0 &\text{on} \ \Gamma_{SS} := \Gamma_{D_1} \cap \Gamma_{N_2}\\
      \text{(symmetric)}& & \hat{\theta}_n = 0, &\hat{\bf T} = {\bf 0} &  &\text{on} \ \Gamma_{S} := \Gamma_{N_1} \cap \Gamma_{D_2}\\
      \text{(free)} & & & \hat{\bf T} = {\bf 0} ,& \hat{B}_{nn} = 0  &\text{on} \ \Gamma_{F} := \Gamma_{N_1} \cap \Gamma_{N_2}\\
    \end{array}
  \end{equation*}
  \label{remark:shellBCs}
\end{remark}

\begin{remark}
  \label{alternative_f_KLS}
  The linear functional $\linFunctional^{\KLS} \in \left( \mathcal{V}^{\KLS} \right)^*$ we employ in this section may be replaced by its more common definition
  \begin{equation*}
    \begin{aligned}
      \left\langle \linFunctional^{\KLS}, {\bf v} \right\rangle = \int_{\physDomain} \applied{\textbf{\forcing}} \cdot {\bf v} \ d \physDomain + \int_{\physBoundary_{N_1}} \applied{\bm{\tau}} \cdot {\bf v} \ d \physBoundary +
      \int_{\physBoundary_{N_1}} \applied{\moment}_{nt} \midsurfRot_t({\bf v}) \ d \physBoundary + \int_{\physBoundary_{N_2}} \applied{\moment}_{nn} \midsurfRot_n({\bf v}) \ d \physBoundary
    \end{aligned}
  \end{equation*}
  without changing the solution of the Kirchhoff-Love shell problem. This is because both linear functionals return the same result when acting on ${\bf v} \in \mathcal{V}^{\KLS}_{{\bf 0},0}$.  However, they do not return the same result when acting on arbitrary ${\bf v} \in \mathcal{V}^{\KLS}$.  In fact, it turns out that Assumption \ref{assumption1} from our abstract framework does not hold for the above definition of $\linFunctional^{\KLS} \in \left( \mathcal{V}^{\KLS} \right)^*$ since the transverse shearing and twisting moment are not energetically conjugate to the boundary displacement and, hence, are not the Lagrange multiplier fields associated with enforcing the displacement boundary condition.  Instead, the ersatz traction and corner forces are the Lagrange multiplier fields associated with enforcing the displacement boundary condition.
\end{remark}


\subsection{A Generalized Green's Identity}
Now that we have stated a suitable variational formulation for the Kirchhoff-Love shell problem, we present a generalized Green's identity to be used later in constructing Nitsche's method. We first have the following lemma regarding performing integration by parts along a manifold.

\begin{lemma}[Green's Theorems for In-Plane Vector and Tensor Fields on Manifolds]
  \label{lemma:gen_greens_man}
  Let $\phi$ be a differentiable scalar field, $\fullTens{v} = \surfVec{v} + v_3 \aVec^3$ be a differentiable vector field, and $\surfTens{M}$ be a differentiable in-plane tensor field. Then
  \begin{equation*}
  \begin{aligned}
  \int_{\physDomain} \surfVec{\nabla} \phi \cdot \surfVec{v} \ d \physDomain = \int_{\physBoundary} \phi \left( \surfVec{v} \cdot \surfVec{\bdyNormal} \right) \ d \physBoundary - \int_{\physDomain} \phi \left( \surfVec{\nabla} \cdot \surfVec{v} \right) \ d \physDomain
  \end{aligned}
\end{equation*}
and
  \begin{equation*}
  \begin{aligned}
  \int_{\physDomain} \left( \surfVec{\nabla} \fullTens{\midsurfDispTest} \right) \colon \surfTens{M} \ d \physDomain = \underbrace{\int_{\physBoundary} \surfVec{\midsurfDispTest} \cdot \surfTens{M} \cdot \surfVec{\bdyNormal} \ d \physBoundary - \int_{\physDomain} \surfVec{\midsurfDispTest} \cdot \left( \surfVec{\nabla} \cdot \surfTens{M} \right) \ d \physDomain}_{\text{in-plane}} - \underbrace{\int_{\physDomain} \outOfPlane{\midsurfDispTest} \left( \surfTens{M} \colon \surfTens{\bCurv} \right) \ d \physDomain}_{\text{out-of-plane}}.
  \end{aligned}
\end{equation*}

\begin{proof}
  By the product rule, we can write
  \begin{equation*}
    \int_{\physDomain} \surfVec{\nabla} \cdot \left( \phi \ \surfVec{v} \right) \ d \physDomain = \int_{\physDomain} \surfVec{\nabla} \phi \cdot \surfVec{v} \ d \physDomain + \int_{\physDomain} \phi \left( \surfVec{\nabla} \cdot \surfVec{v} \right) \ d \physDomain
  \end{equation*}
  and by the divergence theorem, it follows that
  \begin{equation*}
    \int_{\physDomain} \surfVec{\nabla} \cdot \left( \phi \ \surfVec{v} \right) \ d \physDomain = \int_{\physBoundary} \phi \left( \surfVec{v} \cdot \surfVec{\bdyNormal} \right) \ d \physBoundary.
  \end{equation*}
  Combining these two expressions yields the first result.

  To establish the second result, we begin by expressing the vector field as $\fullTens{v} = \surfVec{v} + v_3 \aVec^3$ and observe that
  \begin{equation*}
    \int_{\physDomain} \left( \surfVec{\nabla} \fullTens{\midsurfDispTest} \right) \colon \surfTens{M} \ d \physDomain = \int_{\physDomain} \left( \surfVec{\nabla} \  \surfVec{\midsurfDispTest} \right) \colon \surfTens{M} \ d \physDomain + \int_{\physDomain} \left( \surfVec{\nabla} \outOfPlane{\midsurfDispTest} \otimes \aVec^3 \right) \colon \surfTens{M} \ d \physDomain + \int_{\physDomain} v_3 \left( \surfVec{\nabla} \aVec^3 \right) \colon \surfTens{M} \ d \physDomain.
  \end{equation*}
  The second-to-last integral in the above expression vanishes by the orthogonality between $\aVec^{3}$ and $(\aVec^{1},\aVec^{2})$ and the last integral can be rewritten as
  \begin{equation*}
    \int_{\physDomain} v_3 \left( \surfVec{\nabla} \aVec^3 \right) \colon \surfTens{M} \ d \physDomain = - \int_{\physDomain} \outOfPlane{\midsurfDispTest} \left( \surfTens{M} \colon \surfTens{\bCurv} \right) \ d \physDomain
  \end{equation*}
  by the relationship $\surfVec{\nabla} \aVec^{3} = - \surfTens{\bCurv}$, ultimately arriving at the out-of-plane expression in the result of the Green's identity. By the product rule, it follows that
  \begin{equation*}
    \int_{\physDomain} \surfVec{\nabla} \cdot \left( \surfVec{\midsurfDispTest} \cdot \surfTens{M} \right) \ d \physDomain = \int_{\physDomain} \left( \surfVec{\nabla} \ \surfVec{\midsurfDispTest} \right) \colon \surfTens{M} \ d \physDomain + \int_{\physDomain}  \surfVec{\midsurfDispTest} \cdot \left( \surfVec{\nabla} \cdot \surfTens{M} \right) \ d \physDomain,
  \end{equation*}
  and by the divergence theorem, it follows that
  \begin{equation*}
    \int_{\physDomain} \surfVec{\nabla} \cdot \left( \surfVec{\midsurfDispTest} \cdot \surfTens{M} \right) \ d \physDomain = \int_{\physBoundary} \surfVec{\midsurfDispTest} \cdot \surfTens{M} \cdot \surfVec{\bdyNormal} \ d \physBoundary.
  \end{equation*}
  Combining these two expressions yields the in-plane result of the Green's identity.
  \end{proof}
\end{lemma}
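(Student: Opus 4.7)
The plan is to establish both identities via the combination of (i) the surface product rule for the divergence operator $\surfVec{\nabla}\cdot$, and (ii) the surface divergence theorem applied to in-plane vector fields on $\physDomain$. The only genuinely manifold-specific ingredient beyond the Euclidean case is the Weingarten relation $\surfVec{\nabla}\aVec^{3} = -\surfTens{\bCurv}$, which will couple the out-of-plane component of $\fullTens{v}$ to the curvature tensor and produce the extra bulk term that distinguishes the second identity from its flat-space counterpart.

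For the first identity, I would apply the scalar product rule $\surfVec{\nabla}\cdot(\phi\,\surfVec{v}) = \surfVec{\nabla}\phi\cdot\surfVec{v} + \phi\,(\surfVec{\nabla}\cdot\surfVec{v})$, integrate over $\physDomain$, and invoke the surface divergence theorem to rewrite the left-hand side as $\int_{\physBoundary}\phi(\surfVec{v}\cdot\surfVec{\bdyNormal})\,d\physBoundary$. Rearranging produces the stated formula; since $\surfVec{v}$ and $\phi$ are manipulated purely in-plane, no curvature terms arise.

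For the second identity, I would split $\fullTens{v} = \surfVec{v} + v_3\aVec^{3}$ and decompose the surface gradient accordingly,
\begin{equation*}
\surfVec{\nabla}\fullTens{v} = \surfVec{\nabla}\,\surfVec{v} + \surfVec{\nabla}v_3\otimes\aVec^{3} + v_3\,\surfVec{\nabla}\aVec^{3},
\end{equation*}
and contract each piece against the in-plane tensor $\surfTens{M}$. The middle contraction vanishes because $\surfTens{M}$ is in-plane and $\aVec^{3}$ is orthogonal to the tangent plane, so $(\surfVec{\nabla}v_3 \otimes \aVec^{3}):\surfTens{M} = 0$. The third piece produces the curvature bulk term $-v_3(\surfTens{M}:\surfTens{\bCurv})$ via the Weingarten relation. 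The remaining in-plane piece $(\surfVec{\nabla}\,\surfVec{v}):\surfTens{M}$ is handled by the tensorial product rule $\surfVec{\nabla}\cdot(\surfVec{v}\cdot\surfTens{M}) = (\surfVec{\nabla}\,\surfVec{v}):\surfTens{M} + \surfVec{v}\cdot(\surfVec{\nabla}\cdot\surfTens{M})$ followed by the surface divergence theorem, which contributes the boundary integral $\int_{\physBoundary}\surfVec{v}\cdot\surfTens{M}\cdot\surfVec{\bdyNormal}\,d\physBoundary$. Assembling these yields the claimed identity.

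The main obstacle I anticipate is not conceptual but bookkeeping: verifying that the mixed term in the decomposition of $\surfVec{\nabla}\fullTens{v}$ truly drops out requires the in-plane hypothesis on $\surfTens{M}$ to be used cleanly, and the sign in the Weingarten relation must be tracked carefully so that the curvature contribution enters with the correct orientation. Everything else reduces to the classical surface divergence theorem, which I would invoke as a given background result rather than re-derive.
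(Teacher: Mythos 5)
Your proposal matches the paper's proof essentially term by term: the scalar product rule plus surface divergence theorem for the first identity, and for the second the decomposition $\surfVec{\nabla}\fullTens{v} = \surfVec{\nabla}\,\surfVec{v} + \surfVec{\nabla}v_3\otimes\aVec^{3} + v_3\,\surfVec{\nabla}\aVec^{3}$ with the middle term annihilated by the in-plane hypothesis, the Weingarten relation $\surfVec{\nabla}\aVec^{3}=-\surfTens{\bCurv}$ producing the curvature bulk term, and the tensorial product rule plus divergence theorem handling the in-plane piece. There is nothing to add; the argument is correct and is the same one the authors use.
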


With the ability to perform vector integration by parts along manifolds, we are ready to state and prove our generalized Green's identity for the Kirchhoff-Love shell. Let
\begin{equation}
  \tilde{\mathcal{V}}^{\KLS} := \left\{ {\bf v} = \surfVec{v} + v_3 \aVec^3 \ \colon \ (v_1,v_2) \in \left[ H^2(\Omega) \right]^2 \hspace{5pt} \text{and} \hspace{5pt} v_3 \in H^4(\physDomain) \right\}
  \label{eqn:V_tilde_KLS}
\end{equation}
and note that $\tilde{\mathcal{V}}^{\KLS} \subset \mathcal{V}^{\KLS}$ is indeed a subspace by Sobolev embedding \cite{EvansPDEs}. Recall from Remark~\ref{remark:V_tilde} that $\tilde{\mathcal{V}}^{\KLS}$ is more regular than what is ultimately required for discretization, a point to be addressed in the next subsection. Then the following generalized Green's identity holds for the Kirchhoff-Love shell:

\begin{lemma}[Generalized Green's Identity for the Kirchhoff-Love Shell]
  For ${\bf w} \in \tilde{\mathcal{V}}^{\KLS}$ and ${\bf v} \in \mathcal{V}^{\KLS}$, the following Green's identity holds:
  \begin{equation}
    \begin{aligned}
      a^{\KLS}&({\bf w},{\bf v}) = \\
      &\clipbox{-2 0 495 0}{$\underbrace{ \int_{\physDomain} \surfVec{\midsurfDispTest} \cdot \left[ \surfVec{\nabla} \cdot \left( \surfTens{\SFF} \cdot \surfTens{\bendStress}({\bf w}) \right) + \left( \surfVec{\nabla} \cdot \surfTens{\bendStress}({\bf w}) \right) \cdot \surfTens{\SFF} - \surfVec{\nabla} \cdot \surfTens{\memStress}({\bf w}) \right]  \ d \physDomain + \int_{\physDomain} \outOfPlane{\midsurfDispTest} \left[ \surfTens{\bendStress}({\bf w}) \colon \surfTens{\TFF} - \surfVec{\nabla} \cdot \left( \surfTens{\Projector} \cdot \left( \surfVec{\nabla} \cdot \surfTens{\bendStress}({\bf w}) \right) \right) - \surfTens{\memStress}({\bf w}) \colon \surfTens{\SFF} \right] \ d \physDomain \hspace{50em}}$} \\
      &\phantom{=} \clipbox{10 0 -2 0}{$\underbrace{ \hspace{1em} + \int_{\physBoundary_{N_2}} \moment_{nn}({\bf w}) \midsurfRot_n({\bf v}) \ d \physBoundary + \sum_{C \in \cornerSet{N}} \left. \left( \llbracket \moment_{nt}({\bf w}) \rrbracket \outOfPlane{v} \right) \right|_{C} + \int_{\physBoundary_{N_1}} \fullTens{\midsurfDispTest} \cdot {\bf T}({\bf w}) \ d \physBoundary }_{\displaystyle \langle \mathcal{L}^{\KLS} {\bf w}, {\bf v} \rangle}$}\\
      &\phantom{=} + \underbrace{ \int_{\physBoundary_{D_2}} \moment_{nn}({\bf w}) \midsurfRot_n(\fullTens{\midsurfDispTest}) \ d \physBoundary + \sum_{C \in \cornerSet{D}} \left. \left( \llbracket \moment_{nt}({\bf w}) \rrbracket \outOfPlane{v} \right) \right|_{C} + \int_{\physBoundary_{D_1}} \fullTens{\midsurfDispTest} \cdot {\bf T}({\bf w}) \ d \physBoundary }_{ \displaystyle \langle \mathcal{B}^{\KLS} {\bf w}, \mathcal{T}^{\KLS} {\bf v} \rangle},
    \end{aligned}
    \label{eqn:Greens_ID_KLS}
  \end{equation}
  where $\surfTens{c} = \surfTens{b} \cdot \surfTens{b}$ is the third fundamental form of $\physDomain$, $\moment_{nn}({\bf w}) = \surfVec{n} \cdot \surfTens{\bendStress}({\bf w})\cdot \surfVec{n}$ is the \textbf{bending moment}, $\moment_{nt}({\bf w}) = \surfVec{n} \cdot \surfTens{\bendStress}({\bf w})\cdot \surfVec{t}$ is the \textbf{twisting moment}, and
  \begin{equation}
    {\bf T}({\bf w}) = \underbrace{ \overbrace{ \surfTens{\memStress}({\bf w}) \cdot \surfVec{\bdyNormal} }^{\surfVec{\ersatz}^{(\memStress)}({\bf w})} \overbrace{ - \surfTens{\SFF} \cdot \left( \surfTens{\bendStress}({\bf w}) \cdot \surfVec{\bdyNormal} + \surfVec{\bdyTangent} \moment_{nt}({\bf w}) \right) }^{\surfVec{\ersatz}^{(\bendStress)}({\bf w})} }_{\surfVec{\ersatz}({\bf w})} + \underbrace{ \left[ \left( \surfVec{\nabla} \cdot \surfTens{\bendStress}({\bf w}) \right) \cdot \surfVec{\bdyNormal} + \frac{\partial \moment_{nt}({\bf w})}{\partial t} \right] }_{\outOfPlane{\ersatz}({\bf w})} \aVec^{3}
    \label{eqn:KL_Shell_ersatz}
  \end{equation}
  is the \textbf{ersatz force}.  Moreover, the solution ${\bf u}$ of Problem $(V^{\KLS})$ satisfies $\mathcal{L}^{\KLS} {\bf u} = \linFunctional^{\KLS}$ provided the problem parameters are smooth enough that ${\bf u} \in \tilde{\mathcal{V}}^{\KLS}$.

  \begin{proof}
    The Green's identity follows immediately by one application of reverse integration by parts on the membrane contribution and two applications of reverse integration by parts on the bending contribution through the results of Lemma~\ref{lemma:gen_greens_man}. Beginning with the membrane portion of the variational form, we have
    \begin{equation*}
      \begin{aligned}
        \int_{\physDomain} \surfTens{\memStress}({\bf w}) \colon \surfTens{\memStrain}({\bf v}) \ d \physDomain &= \int_{\physDomain} \surfTens{\memStress}({\bf w}) \colon \Sym \left( \surfVec{\nabla} {\bf v} \right) \ d \physDomain\\
        &= \int_{\physBoundary} \surfVec{v} \cdot \surfTens{\memStress}({\bf w}) \cdot \surfVec{\bdyNormal}  \ d \physBoundary - \int_{\physDomain} \surfVec{v} \cdot \left( \surfVec{\nabla} \cdot \surfTens{\memStress}({\bf w}) \right) \ d \physDomain - \int_{\physDomain} v_3 \left( \surfTens{\memStress}({\bf w}) \colon \surfTens{\bCurv} \right)  \ d \physDomain.
      \end{aligned}
    \end{equation*}
    For the bending portion of the variational form, we begin by employing the decomposition for the bending strain found in Table~\ref{table:VariousStrains}, in particular,
    \begin{equation*}
        \int_{\physDomain} \surfTens{\bendStress}({\bf w}) \colon \surfTens{\bendStrain}({\bf v}) \ d \physDomain = - \int_{\physDomain} \left( \surfTens{b} \cdot \surfTens{\bendStress}({\bf w}) \right) \colon \Sym \left( \surfVec{\nabla} {\bf v} \right) \ d \physDomain + \int_{\physDomain} \surfTens{\bendStress}({\bf w}) \colon \Sym \left( \surfVec{\nabla} \ \surfVec{\theta}({\bf v}) \right) \ d \physDomain.
    \end{equation*}
    We handle each of these integrals individually. Beginning with the first, we apply the results of Lemma~\ref{lemma:gen_greens_man} once to obtain
    \begin{equation*}
        \int_{\physDomain} \left( \surfTens{b} \cdot \surfTens{\bendStress}({\bf w}) \right) \colon \Sym \left( \surfVec{\nabla} {\bf v} \right) \ d \physDomain = \int_{\physBoundary} \surfVec{v} \cdot \left( \surfTens{b} \cdot \surfTens{\bendStress}({\bf w}) \right) \cdot \surfVec{n} \ d \physBoundary - \int_{\physDomain} \surfVec{v} \cdot \left[ \surfVec{\nabla} \cdot \left( \surfTens{b} \cdot \surfTens{\bendStress}({\bf w}) \right) \right] \ d \physDomain - \int_{\physDomain} v_3 \left( \surfTens{\bendStress}({\bf w}) \colon \surfTens{c} \right) \ d \physDomain.
    \end{equation*}
    The second integral proceeds as follows:
    \begin{equation*}
      \begin{aligned}
        \int_{\physDomain} \surfTens{\bendStress}({\bf w}) \colon \Sym \left( \surfVec{\nabla} \ \surfVec{\theta}({\bf v}) \right) \ d \physDomain &= \int_{\physBoundary} \bendStress_{nn}({\bf w}) \theta_n({\bf v}) \ d \physBoundary + \int_{\physBoundary} \bendStress_{nt}({\bf w}) \theta_t({\bf v}) \ d \physBoundary + \int_{\physDomain} \left( \surfVec{\nabla} \cdot \surfTens{\bendStress}({\bf w}) \right) \cdot \left( \aVec_3 \cdot \surfVec{\nabla} {\bf v} \right) \ d \physDomain\\
        &= \int_{\physBoundary} \bendStress_{nn}({\bf w}) \theta_n({\bf v}) \ d \physBoundary + \int_{\physBoundary} \bendStress_{nt}({\bf w}) \theta_t({\bf v}) \ d \physBoundary + \int_{\physDomain}  \left( \surfVec{\nabla} \cdot \surfTens{\bendStress}({\bf w}) \right) \cdot \surfVec{\nabla} v_3 \ d \physDomain \\
        &\phantom{=} - \int_{\physDomain} \left( \surfVec{\nabla} \cdot \surfTens{\bendStress}({\bf w}) \right) \cdot \left( {\bf v} \cdot \surfVec{\nabla} \aVec_3 \right) \ d \physDomain\\
        &= \int_{\physBoundary} \bendStress_{nn}({\bf w}) \theta_n({\bf v}) \ d \physBoundary + \int_{\physBoundary} v_3 \frac{\partial \bendStress_{nt}({\bf w})}{\partial t} \ d \physBoundary + \sum_{C \in \cornerSet{}} \left. \left( \llbracket \moment_{nt}({\bf w}) \rrbracket \outOfPlane{v} \right) \right|_{C} - \int_{\physBoundary} \bendStress_{nt}({\bf w}) \left( \surfVec{t} \cdot \surfTens{b} \cdot \surfVec{v} \right) \ d \physBoundary\\
        &\phantom{=} + \int_{\physDomain} \left( \surfTens{\Projector} \cdot \left( \surfVec{\nabla} \cdot \surfTens{\bendStress}({\bf w}) \right) \right) \cdot \surfVec{\nabla} v_3 \ d \physDomain + \int_{\physDomain} \surfVec{v} \cdot \left( \left( \surfVec{\nabla} \cdot \surfTens{\bendStress}({\bf w}) \right) \cdot \surfTens{b} \right) \ d \physDomain \\
        &= \int_{\physBoundary} \bendStress_{nn}({\bf w}) \theta_n({\bf v}) \ d \physBoundary + \int_{\physBoundary} v_3 \frac{\partial \bendStress_{nt}({\bf w})}{\partial t} \ d \physBoundary + \sum_{C \in \cornerSet{}} \left. \left( \llbracket \moment_{nt}({\bf w}) \rrbracket \outOfPlane{v} \right) \right|_{C} - \int_{\physBoundary} \bendStress_{nt}({\bf w}) \left( \surfVec{t} \cdot \surfTens{b} \cdot \surfVec{v} \right) \ d \physBoundary\\
        &\phantom{=} + \int_{\physBoundary} v_3 \left[ \surfVec{n} \cdot \left( \surfVec{\nabla} \cdot \surfTens{\bendStress}({\bf w}) \right) \right] \ d \physBoundary - \int_{\physDomain} v_3 \left[ \surfVec{\nabla} \cdot \left( \surfTens{\Projector} \cdot \left( \surfVec{\nabla} \cdot \surfTens{\bendStress}({\bf w}) \right) \right) \right] \ d \physDomain\\
        &\phantom{=} + \int_{\physDomain} \surfVec{v} \cdot \left( \left( \surfVec{\nabla} \cdot \surfTens{\bendStress}({\bf w}) \right) \cdot \surfTens{b} \right) \ d \physDomain
      \end{aligned}
    \end{equation*}
    for ${\bf w} \in \tilde{\mathcal{V}}^{\KLS}$ and ${\bf v} \in \mathcal{V}^{\KLS}$. Note that $\surfTens{\Projector}$ arises in the above relations to make explicit that the tensor contraction between $\surfVec{\nabla} \cdot \surfTens{\bendStress}({\bf w})$ and ${\bf v} \cdot \surfVec{\nabla} \aVec_3$ only contains in-plane quantities. In general, $\surfVec{\nabla} \cdot \surfTens{\bendStress}({\bf w})$ contains both in-plane and out-of-plane components and, in order to apply Lemma~\ref{lemma:gen_greens_man} correctly, $\surfTens{\Projector}$ is needed.

    Combining these relationships for membrane and bending contributions, utilizing the definition of the ersatz forces \eqref{eqn:KL_Shell_ersatz}, and splitting the boundary along the 1- and 2-portions yields the presented Green's identity for the linearized Kirchhoff-Love shell. Note that all integrals present in this relationship are well defined since ${\bf T}({\bf w}) \in \left( L^2(\physBoundary_{D_1}) \right)^3$ and $\moment_{nn}({\bf w}) \in L^2(\physBoundary_{D_2})$ by the Trace theorem for Sobolev spaces.

    Now suppose that the problem parameters are sufficiently smooth such that ${\bf u} \in \tilde{\mathcal{V}}^{\KLS}$. We can then write
    \begin{equation}
      \begin{aligned}
        0 &= \left\langle \linFunctional^{\KLS}, \delta {\bf u} \right\rangle - a^{\KLS}({\bf u}, \delta {\bf u}) \\
          &= \int_{\physDomain} \delta \surfVec{u} \cdot \left( \applied{\surfVec{\forcing}} - \surfVec{\nabla} \cdot \left( \surfTens{b} \cdot \surfTens{\bendStress}({\bf u}) \right) - \left( \surfVec{\nabla} \cdot \surfTens{\bendStress}({\bf u}) \right) \cdot \surfTens{b} + \surfVec{\nabla} \cdot \surfTens{\memStress}({\bf u}) \right) \ d \physDomain + \int_{\physBoundary_{N_1}} \delta \surfVec{u} \cdot \left( \applied{\surfVec{\ersatz}} - \surfVec{\ersatz}({\bf u}) \right) \ d \physBoundary \\
          &\phantom{=} + \int_{\physDomain} \delta u_3 \left( \applied{\forcing}_3 - \surfTens{\bendStress}({\bf u}) \colon \surfTens{c} + \surfVec{\nabla} \cdot \left( \surfTens{\Projector} \cdot \left( \surfVec{\nabla} \cdot \surfTens{\bendStress}({\bf u}) \right) \right) + \surfTens{\memStress}({\bf u}) \colon \surfTens{b} \right) \ d \physDomain + \int_{\physBoundary_{N_1}} \delta u_3 \left( \applied{\ersatz}_3 - \ersatz_3({\bf u}) \right) \ d \physBoundary\\
          &\phantom{=} + \int_{\physBoundary_{N_2}} \left( \applied{\moment}_{nn} - \moment_{nn}({\bf u}) \right) \theta_n(\delta {\bf u}) \ d \physBoundary + \sum_{C \in \cornerSet{N}} \left( \left( \applied{\force} - \llbracket \moment_{nt}({\bf u}) \rrbracket \right) \delta u_3 \right) \Big|_{C} \label{eq:g_inv_1}
      \end{aligned}
    \end{equation}
    for all $\delta {\bf u} \in \mathcal{V}^{\KLS}_{{\bf 0},0}$ as a consequence of the generalized Green's identity.  Since $\left( C^\infty_0 (\physDomain) \right)^3 \subset \mathcal{V}^{\KLS}_{{\bf 0},0}$,
    \begin{equation*}
      \begin{aligned}
        0 &= \int_{\physDomain} \delta \surfVec{u} \cdot \left( \applied{\surfVec{\forcing}} - \surfVec{\nabla} \cdot \left( \surfTens{b} \cdot \surfTens{\bendStress}({\bf u}) \right) - \left( \surfVec{\nabla} \cdot \surfTens{\bendStress}({\bf u}) \right) \cdot \surfTens{b} + \surfVec{\nabla} \cdot \surfTens{\memStress}({\bf u}) \right) \ d \physDomain\\
        &\phantom{=} + \int_{\physDomain} \delta u_3 \left( \applied{\forcing}_3 - \surfTens{\bendStress}({\bf u}) \colon \surfTens{c} + \surfVec{\nabla} \cdot \left( \surfTens{\Projector} \cdot \left( \surfVec{\nabla} \cdot \surfTens{\bendStress}({\bf u}) \right) \right) + \surfTens{\memStress}({\bf u}) \colon \surfTens{b} \right) \ d \physDomain
      \end{aligned}
    \end{equation*}
    for all infinitely smooth compact test functions $\delta {\bf u} \in \left( C^\infty_0 (\physDomain) \right)^3$. Since $\left( C^\infty_0 (\physDomain) \right)^3$ is dense in $\left( L^2(\physDomain) \right)^3$, $\applied{\bf f} \in \left( L^2(\physDomain) \right)^3$, $\left[ \surfVec{\nabla} \cdot \left( \surfTens{b} \cdot \surfTens{\bendStress}({\bf u}) \right) + \left( \surfVec{\nabla} \cdot \surfTens{\bendStress}({\bf u}) \right) \cdot \surfTens{b} - \surfVec{\nabla} \cdot \surfTens{\memStress}({\bf u}) \right] \in  \left( L^2(\physDomain) \right)^3$, and $\surfTens{\bendStress}({\bf u}) \colon \surfTens{c} - \surfVec{\nabla} \cdot \left( \surfTens{\Projector} \cdot \left( \surfVec{\nabla} \cdot \surfTens{\bendStress}({\bf u}) \right) \right) - \surfTens{\memStress}({\bf u}) \colon \surfTens{b} \in L^2(\physDomain)$, it follows that
    \begin{equation}
      \applied{\surfVec{\forcing}} = \surfTens{\Projector} \cdot \left[ \surfVec{\nabla} \cdot \left( \surfTens{b} \cdot \surfTens{\bendStress}({\bf u}) \right) + \left( \surfVec{\nabla} \cdot \surfTens{\bendStress}({\bf u}) \right) \cdot \surfTens{b} - \surfVec{\nabla} \cdot \surfTens{\memStress}({\bf u}) \right] \label{eq:g_inv_2}
    \end{equation}
    and
    \begin{equation}
      \applied{\forcing}_3 = \surfTens{\bendStress}({\bf u}) \colon \surfTens{c} - \surfVec{\nabla} \cdot \left( \surfTens{\Projector} \cdot \left( \surfVec{\nabla} \cdot \surfTens{\bendStress}({\bf u}) \right) \right) - \surfTens{\memStress}({\bf u}) \colon \surfTens{b} \label{eq:g_inv_3}
    \end{equation}
    almost everywhere in $\physDomain$.  Inserting \eqref{eq:g_inv_2} and \eqref{eq:g_inv_3} into \eqref{eq:g_inv_1} in turn yields
    \begin{equation}
      \begin{aligned}
        0 &= \int_{\physBoundary_{N_1}} \delta {\bf u} \cdot \left( \applied{\bf T} - {\bf T}({\bf u}) \right) \ d \physBoundary + \int_{\physBoundary_{N_2}} \left( \applied{\moment}_{nn} - \moment_{nn}({\bf u}) \right) \theta_n(\delta {\bf u}) \ d \physBoundary + \sum_{C \in \cornerSet{N}} \left( \left( \applied{\force} - \llbracket \moment_{nt}({\bf u}) \rrbracket \right) \delta u_3 \right) \Big|_{C} \label{eq:g_inv_4}
      \end{aligned}
    \end{equation}
    for all $\delta {\bf u} \in \mathcal{V}^{\KLS}_{{\bf 0},0}$.

    To proceed, let
    \begin{equation*}
    \mathcal{Q}^{\KLS}_1 := \left\{ {\bf q} \in \left( L^{2}\left(\physBoundary_{N_1}\right) \right)^3: \left( \mathscr{E}_1^{\KLS} q_1, \mathscr{E}_1^{\KLS} q_2 \right) \in \left(H^{1/2}(\physBoundary)\right)^2,  \mathscr{E}_1^{\KLS} q_3 \in H^{3/2}(\physBoundary), \textup{ and } q_3 |_{\chi_N} = 0 \right\}
    \end{equation*}
    where $\mathscr{E}_1^{\KLS} : L^2(\physBoundary_{N_1}) \rightarrow L^2(\physBoundary)$ is an extension-by-zero operator.  By the surjectivity of the trace operator, we can define a linear and bounded lifting operator $\mathscr{L}_1^{\KLS} : \mathcal{Q}^{\KLS}_1 \rightarrow \mathcal{V}^{\KLS}_{{\bf 0},0}$ such that $\mathscr{L}_1^{\KLS} {\bf q} |_{\physBoundary} = \left(\mathscr{E}_1^{\KLS} q_i\right) \aVec^i$ and $\theta_n(\mathscr{L}_1^{\KLS} {\bf q})|_{\physBoundary} = 0$ for all ${\bf q} \in \mathcal{Q}^{\KLS}_1$.  Then, for ${\bf q} \in \mathcal{Q}^{\KLS}_1$, we can choose $\delta {\bf u} = \mathscr{L}_1^{\KLS} {\bf q}$ in \eqref{eq:g_inv_4}, yielding
    \begin{equation*}
      \begin{aligned}
        0 &= \int_{\physBoundary_{N_1}} {\bf q} \cdot \left( \applied{\bf T} - {\bf T}({\bf u}) \right) \ d \physBoundary.
      \end{aligned}
    \end{equation*}
    Since $\mathcal{Q}^{\KLS}_1$ is dense in $\left( L^{2}\left(\physBoundary_{N_1}\right) \right)^3$, $\applied{\bf T} \in \left( L^{2}\left(\physBoundary_{N_1}\right) \right)^3$, and ${\bf T}({\bf u}) \in \left( L^{2}\left(\physBoundary_{N_1}\right) \right)^3$, it follows that
    \begin{equation}
    \applied{\bf T} = {\bf T}({\bf u}) \label{eq:g_inv_5}
    \end{equation}
    almost everywhere on $\Gamma_{N_1}$.  Next, let
    \begin{equation*}
    \mathcal{Q}^{\KLS}_2 := \left\{ q \in L^2\left(\physBoundary_{N_2}\right): \mathscr{E}_2^{\KLS} q \in H^{1/2}(\physBoundary) \right\}
    \end{equation*}
    where $\mathscr{E}_2^{\KLS} : L^2\left(\physBoundary_{N_2}\right) \rightarrow L^2(\physBoundary)$ is an extension-by-zero operator.  By the surjectivity of the trace operator, we can define a linear and bounded lifting operator $\mathscr{L}_2^{\KLS} : \mathcal{Q}^{\KLS}_2 \rightarrow \mathcal{V}^{\KLS}_{{\bf 0},0}$ such that $\mathscr{L}_2^{\KLS} q |_{\physBoundary} = {\bf 0}$ and $\theta_n(\mathscr{L}_1^{\KLS} {\bf q})|_{\physBoundary} = \mathscr{E}_2^{\KLS} q$ for all $q \in \mathcal{Q}^{\KLS}_2$.  Then, for $q \in \mathcal{Q}^{\KLS}_2$, we can choose $\delta {\bf u} = \mathscr{L}_2^{\KLS} q$ in \eqref{eq:g_inv_4}, yielding
    \begin{equation*}
      \begin{aligned}
        0 &= \int_{\physBoundary_{N_2}} \left( \applied{\moment}_{nn} - \moment_{nn}({\bf u}) \right) q \ d \physBoundary.
      \end{aligned}
    \end{equation*}
    Since $\mathcal{Q}^{\KLS}_2$ is dense in $L^{2}\left(\physBoundary_{N_2}\right)$, $\applied{\moment}_{nn} \in L^2\left(\physBoundary_{N_2}\right)$, and $\moment_{nn}({\bf u}) \in L^2\left(\physBoundary_{N_2}\right)$, it follows that
    \begin{equation}
    \applied{\moment}_{nn} = \moment_{nn}({\bf u}) \label{eq:g_inv_6}
    \end{equation}
    almost everywhere on $\Gamma_{N_2}$.  Finally, inserting \eqref{eq:g_inv_5} and \eqref{eq:g_inv_6} into \eqref{eq:g_inv_4} yields
    \begin{equation*}
    \begin{aligned}
        0 &= \sum_{C \in \cornerSet{N}} \left( \left( \applied{\force} - \llbracket \moment_{nt}({\bf u}) \rrbracket \right) \delta u_3 \right) \Big|_{C}
      \end{aligned}
    \end{equation*}
    for all $\delta {\bf u} \in \mathcal{V}^{\KLS}_{{\bf 0},0}$.  For each $C \in \cornerSet{N}$, there exists a $\delta {\bf u} \in \mathcal{V}^{\KLS}_{{\bf 0},0}$ such that $\delta u_3|_C = 1$ and $\delta u_3|_{C'} = 0$ for $C' \in \cornerSet{N}$ such that $C' \neq C$.  It follows that
    \begin{equation}
    \applied{\force} = \llbracket \moment_{nt}({\bf u}) \rrbracket \label{eq:g_inv_7}
    \end{equation}
    on $\cornerSet{N}$.
    Combining \eqref{eq:g_inv_2}, \eqref{eq:g_inv_3}, \eqref{eq:g_inv_5}, \eqref{eq:g_inv_6}, and \eqref{eq:g_inv_7} yields the desired result that $\mathcal{L}^{\KLS} {\bf u} = \linFunctional^{\KLS}$.

  \end{proof}
    \label{lemma:Greens_ID_KLS}
\end{lemma}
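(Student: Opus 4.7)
The plan is to derive the generalized Green's identity by applying Lemma~\ref{lemma:gen_greens_man} once to the membrane term and twice to the bending term of $a^{\KLS}(\mathbf{w},\mathbf{v})$, then collect the resulting boundary contributions and split them along the $\physBoundary_{D_\alpha}$/$\physBoundary_{N_\alpha}$ partition in order to identify $\mathcal{L}^{\KLS}$ and $\mathcal{B}^{\KLS}$. The second assertion will then follow by subtracting the variational equation from $\langle\linFunctional^{\KLS},\delta\mathbf{u}\rangle$, using the identity, and extracting the strong-form equations through density and surjectivity arguments.

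First, for the membrane contribution I would apply Lemma~\ref{lemma:gen_greens_man} directly with $\surfTens{M}=\surfTens{\memStress}(\mathbf{w})$, using symmetry of the stress to replace $\surfTens{\memStrain}(\mathbf{v})$ by $\surfVec{\nabla}\mathbf{v}$; this yields the volume divergence term $-\surfVec{v}\cdot(\surfVec{\nabla}\cdot\surfTens{\memStress}(\mathbf{w}))$, the curvature-coupling term $-\outOfPlane{v}\,\surfTens{\memStress}(\mathbf{w}){:}\surfTens{\bCurv}$, and the boundary trace $\surfVec{v}\cdot\surfTens{\memStress}(\mathbf{w})\cdot\surfVec{\bdyNormal}$. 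For the bending contribution I would invoke the decomposition $\surfTens{\bendStrain}(\mathbf{v})=-\surfTens{\bCurv}\cdot\surfTens{\memStrain}(\mathbf{v})+\surfTens{\Projector}\cdot\Sym(\surfVec{\nabla}\surfVec{\theta}(\mathbf{v}))\cdot\surfTens{\Projector}$ from Table~\ref{table:VariousStrains}. The first sub-term reduces to a membrane-type calculation with $\surfTens{\bCurv}\cdot\surfTens{\bendStress}(\mathbf{w})$ playing the role of stress, and contributes a $\surfTens{\bendStress}(\mathbf{w}){:}\surfTens{\cCurv}$ volume term via the Weingarten relation $\surfVec{\nabla}\aVec_3=-\surfTens{\bCurv}$ that is already embedded in Lemma~\ref{lemma:gen_greens_man}. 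The second sub-term requires a first application of the lemma to transfer a derivative off $\surfVec{\theta}(\mathbf{v})$, substitution of the Kirchhoff-Love constraint $\surfVec{\theta}(\mathbf{v})=-\aVec_3\cdot\surfVec{\nabla}\mathbf{v}$, and a second application of the lemma to reach second derivatives of $\mathbf{v}$, with the in-plane projector $\surfTens{\Projector}$ inserted wherever $\surfVec{\nabla}\cdot\surfTens{\bendStress}(\mathbf{w})$ pairs against an in-plane vector (since its out-of-plane component should not contract with $\surfVec{v}$).

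The main obstacle is the boundary term $\int_{\physBoundary}\moment_{nt}(\mathbf{w})\theta_t(\mathbf{v})\,d\physBoundary$ that emerges from the bending step. After resolving $\theta_t(\mathbf{v})$ into a tangential derivative of $\outOfPlane{v}$ plus a curvature contribution $\surfVec{v}\cdot\surfTens{\bCurv}\cdot\surfVec{\bdyTangent}$ (obtained by testing the Kirchhoff-Love constraint against $\surfVec{\bdyTangent}$), a tangential integration by parts along the piecewise smooth boundary — Kirchhoff's classical trick — produces (i) the $\outOfPlane{v}\,\partial \moment_{nt}(\mathbf{w})/\partial t$ contribution that combines with $\surfVec{\bdyNormal}\cdot(\surfVec{\nabla}\cdot\surfTens{\bendStress}(\mathbf{w}))$ to form the transverse ersatz shear $\outOfPlane{\ersatz}(\mathbf{w})$, (ii) the curvature correction that combines with $\surfTens{\bendStress}(\mathbf{w})\cdot\surfVec{\bdyNormal}$ and with $\surfTens{\memStress}(\mathbf{w})\cdot\surfVec{\bdyNormal}$ to assemble the in-plane ersatz $\surfVec{\ersatz}(\mathbf{w})$, and (iii) the jump sum $\sum_{C\in\cornerSet{}}\llbracket \moment_{nt}(\mathbf{w})\rrbracket\outOfPlane{v}|_C$ coming from the endpoints of the tangential integrations on smooth arcs between corners. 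Splitting each boundary integral and each corner sum along the Dirichlet/Neumann decomposition then delivers the two bracketed groups in \eqref{eqn:Greens_ID_KLS}, and integrability of every piece is inherited from $\mathbf{w}\in\tilde{\mathcal{V}}^{\KLS}$ via the trace theorem, which places $\mathbf{T}(\mathbf{w})\in(L^2(\physBoundary))^3$ and $\moment_{nn}(\mathbf{w})\in L^2(\physBoundary)$.

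For the second assertion, assuming $\mathbf{u}\in\tilde{\mathcal{V}}^{\KLS}$, I would subtract the variational identity $a^{\KLS}(\mathbf{u},\delta\mathbf{u})=\langle\linFunctional^{\KLS},\delta\mathbf{u}\rangle$ from the Green's identity to obtain, for every $\delta\mathbf{u}\in\mathcal{V}^{\KLS}_{\mathbf{0},0}$, an equality whose left-hand side collects the in-plane and out-of-plane bulk residuals together with the Neumann and corner residuals. Testing first against compactly supported $\delta\mathbf{u}\in(C^\infty_0(\physDomain))^3$ and invoking the fundamental lemma of the calculus of variations via density of $(C^\infty_0(\physDomain))^3$ in $(L^2(\physDomain))^3$ extracts the strong-form in-plane and out-of-plane PDEs; reinserting them cancels all volume contributions. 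To extract the remaining boundary residuals I would construct bounded lifting operators from suitable Neumann trace spaces into $\mathcal{V}^{\KLS}_{\mathbf{0},0}$ (guaranteed by surjectivity of the shell trace and a zero-extension into the Dirichlet portion), yielding in turn the ersatz-force balance on $\physBoundary_{N_1}$ and the bending-moment balance on $\physBoundary_{N_2}$. Finally, the discreteness of $\cornerSet{N}$ permits a test displacement whose normal component is a Kronecker indicator at each corner, extracting $\applied{\force}=\llbracket\moment_{nt}(\mathbf{u})\rrbracket$ and concluding $\mathcal{L}^{\KLS}\mathbf{u}=\linFunctional^{\KLS}$.
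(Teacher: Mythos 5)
Your proposal is correct and follows essentially the same route as the paper's proof: you use the identical decomposition of $\surfTens{\bendStrain}(\mathbf{v})$ from Table~\ref{table:VariousStrains}, the same three applications of Lemma~\ref{lemma:gen_greens_man} (one on the membrane term and two on the bending term), the same tangential integration by parts of the $\moment_{nt}(\mathbf{w})\,\theta_t(\mathbf{v})$ boundary term to assemble the ersatz shear and corner jumps, and the same density/lifting-operator argument to pass from the weak residual to the Euler--Lagrange equations and the conclusion $\mathcal{L}^{\KLS}\mathbf{u}=\linFunctional^{\KLS}$.
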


\begin{remark}
  The Euler-Lagrange equations of Problem $(V^{\KLS})$ give rise to the following strong formulation:
  $$
  (S^{\KLS}) \left\{ \hspace{5pt}
  \parbox{6.00in}{
  \noindent \textup{Find ${\bf u} \colon \overline{\physDomain} \rightarrow \R^3$ such that:}
  \begin{equation*}
    \begin{aligned}
      \begin{array}{rll}
        \surfTens{\Projector} \cdot \left[ \surfVec{\nabla} \cdot \left( \surfTens{b} \cdot \surfTens{\bendStress}({\bf u}) \right) + \left( \surfVec{\nabla} \cdot \surfTens{\bendStress}({\bf u}) \right) \cdot \surfTens{b} - \surfVec{\nabla} \cdot \surfTens{\memStress}({\bf u}) \right] &= \surfVec{\applied{\forcing}} \hspace{10pt} &\text{in} \ \physDomain\\
        \surfTens{\bendStress}({\bf u}) \colon \surfTens{c} - \surfVec{\nabla} \cdot \left( \surfTens{\Projector} \cdot \left( \surfVec{\nabla} \cdot \surfTens{\bendStress}({\bf u}) \right) \right) - \surfTens{\memStress}({\bf u}) \colon \surfTens{b} & = \applied{\forcing}_3 \hspace{10pt} &\text{in} \ \physDomain\\
        {\bf u} &= \applied{\bf u} \hspace{10pt} &\text{on} \ \physBoundary_{D_1}\\
        \theta_n({\bf u}) &= \applied{\theta}_n \hspace{10pt} &\text{on} \ \physBoundary_{D_2}\\
        {\bf T}({\bf u}) &= \applied{\bf T} \hspace{10pt} &\text{on} \ \physBoundary_{N_1}\\
        \moment_{nn}({\bf u}) &= \applied{\moment}_{nn} \hspace{10pt} &\text{on} \ \physBoundary_{N_2}\\
        \llbracket \moment_{nt}({\bf u}) \rrbracket &= \applied{\force} &\text{on} \ \chi_N.
      \end{array}
    \end{aligned}
    \label{eqn:KLS_Strong}
  \end{equation*}
  }
  \right.
  $$
  \noindent This result follows immediately from the relationship $\mathcal{L}^{\KLS} {\bf u} = \linFunctional^{\KLS}$ that was proved in Lemma~\ref{lemma:Greens_ID_KLS}.
\end{remark}

\begin{remark}
  \label{rem:IncorrectErsatz}
The Euler-Lagrange equations presented above differ from those commonly presented in the literature, for example, from those presented in \cite[p.155]{Ciarlet2005} and in \cite[p.156]{Koiter1973foundations}. In those references, the in-plane bending contribution to the ersatz force is reported to be (in our notation) ``$-2 \surfTens{\SFF} \cdot \surfTens{\bendStress}({\bf w}) \cdot \surfVec{\bdyNormal}$'', which does not agree when compared with our derived forces in \eqref{eqn:KL_Shell_ersatz}, i.e., $\surfVec{\ersatz}^{(\bendStress)}({\bf w})$. However, we believe that the ones presented here are correct for several reasons.  First of all, the Euler-Lagrange equations presented here derive directly from the Green's identity presented in Lemma \ref{lemma:Greens_ID_KLS}.  Furthermore, we later use the Euler-Lagrange equations presented here to derive the required applied forces, tractions, and bending moments for a set of manufactured solutions.  These manufactured solutions are then employed to numerically confirm convergence rates for our proposed Nitsche formulation in conjunction with an isogeometric Kirchhoff-Love shell discretization.  By contrast, when using the equations in \cite{Ciarlet2005} to derive applied forces, tractions, and bending moments, we do not see convergence in the corresponding numerical results to the manufactured solutions. Although the origin of the erroneous term is unclear, we have traced several references back to Koiter's early work \cite[(3.10)]{Koiter1970foundation} which does not include the full derivation. Later work by Koiter and his student, van der Heijden, \cite[p.20]{van1976modified} states that these incorrect boundary terms arise ``after fairly lengthy algebra'' and cites a paper listed in the references section as ``to be published''. As such, we have been unable to trace exactly where the algebra leading to the incorrect result went awry.
\end{remark}

\begin{remark}
The decomposition of the in-plane ersatz force into membrane and bending contributions, i.e.,  $\surfVec{\ersatz}^{(\memStress)}({\bf u})$ and $\surfVec{\ersatz}^{(\bendStress)}({\bf u})$, respectively, is done for later convenience in order to establish trace inequality and penalty constants that are independent of thickness.
\end{remark}


\subsection{Generalized Trace and Cauchy-Schwarz Inequalities}

With a Green's identity in place, we are ready to provide generalized trace and Cauchy-Schwarz inequalities satisfying Assumption~\ref{assumption2}, the final pieces required before presenting Nitsche's method for the Kirchhoff-Love shell. We establish a mesh $\mathcal{K}$ of non-overlapping (mapped) polygons, which we refer to henceforth as elements, associated with $\physDomain$ that is comprised of elements such that $\physDomain = \text{int}(\overline{\cup_{K \in \mathcal{K}} K})$. Next, we assume that the approximation space $\mathcal{V}^{\KLS}_h$ consists of (at least) $C^1$-continuous piecewise polynomial or rational approximations over the mesh $\mathcal{K}$. For each element $K \in \mathcal{K}$, we associate an element size $h_K = \text{diam}(K)$, and we associate with the entire mesh $\mathcal{K}$ a mesh size $h = \max_{K \in \mathcal{K}} h_K$. We collect the boundary edges into an edge mesh $\mathcal{E}$. In the case of the Kirchhoff-Love shell, we must construct two additional edge meshes, $\mathcal{E}_{D_1}$ and $\mathcal{E}_{D_2}$. We associate the members of $\mathcal{E}_{D_1}$ with elements whose edges belong to $\physBoundary_{D_1}$ and likewise for members of $\mathcal{E}_{D_2}$, i.e., for $\alpha = 1,2$,
\begin{equation*}
 \mathcal{E}_{D_\alpha} = \left\{ E \in \mathcal{E} \colon E \subset \physBoundary_{D_\alpha} \right\}.
\end{equation*}
To ensure that each edge in $\mathcal{E}$ belongs to either the Neumann or Dirichlet boundaries, assume that $\physBoundary_{D_\alpha} = \text{int}(\overline{\cup_{E \in \mathcal{E}_{D_\alpha}} E})$ for $\alpha = 1,2$.  We associate an edge size $h_E = h_K$ for each edge $E \in \mathcal{E}$, where $K \in \mathcal{K}$ is the element for which $E$ is the edge. This is not the only size we can associate with the edge, but it is the simplest.  For anisotropic meshes, other prescriptions may be more appropriate (see, e.g., \cite{bazilevs2007weak}).  Note that when it is necessary to differentiate between edges in $\mathcal{E}_{D_1}$ and $\mathcal{E}_{D_2}$, we will introduce subscripts on the edge variable, e.g., $E_1 \in \mathcal{E}_1$. Lastly, since each $C \in \cornerSet{}$ is associated with an element $K \in \mathcal{K}$, we define $h_C = h_K$. With these definitions in place, we have the following lemma:

\begin{lemma}[Trace Inequalities]
  There exists five positive, dimensionless constants $\cTrace{,1}^{\KLS}, \cTrace{,2}^{\KLS}, \cTrace{,3}^{\KLS}, \cTrace{,4}^{\KLS}, \cTrace{,5}^{\KLS} > 0$ such that
  \begin{equation}
    \sum_{E_1 \in \mathcal{E}_{D_1}} \int_{E_1} \frac{h_{E_1}^3}{\cTrace{,1}^{\KLS} \thickness^3 | \mathbb{\constitutive} |} \left( \ersatz_3({\bf v}_h) \right)^2 \ d \physBoundary \le \frac{1}{5} a^{\KLS}({\bf v}_h,{\bf v}_h)
   \label{eqn:TI_KLS_1}
  \end{equation}
  \begin{equation}
    \sum_{C \in \cornerSet{D}} \frac{h_C^2}{\cTrace{,2}^{\KLS} \thickness^3 | \mathbb{\constitutive} |}  \llbracket \moment_{nt}({\bf v}_h) \rrbracket^2 \Big|_C \le \frac{1}{5} a^{\KLS}({\bf v}_h,{\bf v}_h)
     \label{eqn:TI_KLS_2}
  \end{equation}
  \begin{equation}
   \sum_{E_2 \in \mathcal{E}_{D_2}} \int_{E_2} \frac{h_{E_2}}{\cTrace{,3}^{\KLS} \thickness^3 | \mathbb{\constitutive} |} \left( \moment_{nn}({\bf v}_h) \right)^2 \ d \physBoundary \le \frac{1}{5} a^{\KLS}({\bf v}_h,{\bf v}_h)
   \label{eqn:TI_KLS_3}
  \end{equation}
  \begin{equation}
    \sum_{E_1 \in \mathcal{E}_{D_1}} \int_{E_1} \frac{h_{E_1}}{\cTrace{,4}^{\KLS} \thickness^3 | \mathbb{\constitutive} |} \left| \surfVec{\ersatz}^{(\bendStress)}({\bf v}_h) \right|^2 \ d \physBoundary \le \frac{1}{5} a^{\KLS}({\bf v}_h,{\bf v}_h)
    \label{eqn:TI_KLS_4}
  \end{equation}
  \begin{equation}
    \sum_{E_1 \in \mathcal{E}_{D_1}} \int_{E_1} \frac{h_{E_1}}{\cTrace{,5}^{\KLS} \thickness | \mathbb{\constitutive} |} \left| \surfVec{\ersatz}^{(\memStress)}({\bf v}_h) \right|^2 \ d \physBoundary \le \frac{1}{5} a^{\KLS}({\bf v}_h,{\bf v}_h)
     \label{eqn:TI_KLS_5}
  \end{equation}
  for all ${\bf v}_h \in \mathcal{V}^{\KLS}_h$.
  \begin{proof}
      We prove \eqref{eqn:TI_KLS_1} and remark that the proofs for \eqref{eqn:TI_KLS_2}, \eqref{eqn:TI_KLS_3}, \eqref{eqn:TI_KLS_4}, and \eqref{eqn:TI_KLS_5} follow in an identical manner.

      We begin by denoting the space of rigid body modes associated with the Kirchhoff-Love shell by
      \begin{equation*}
        \text{Rig}^{\KLS}(\Omega) \colon = \left\{ {\bf v}_h \in \mathcal{V}_h^{\KLS} \colon \surfTens{\memStrain}({\bf v}_h) = \surfTens{\bendStrain}({\bf v}_h) = \surfTens{0} \right\}\footnote{The nomenclature for this space refers to the fact that it contains the \textbf{\emph{rigid body modes}} associated with the various strain tensors.}.
      \end{equation*}
      We then denote the orthogonal complement of this space by
      \begin{equation}
      \mathring{\mathcal{V}}^{\KLS}_h \colon = \left\{ {\bf v} \in \mathcal{V}_h^{\KLS} \colon ({\bf v}, {\bf r})_{L^2} = 0 \ \forall \ {\bf r} \in \text{Rig}^{\KLS}(\Omega) \right\}.
      \end{equation}
      Since $\text{Rig}^{\KLS}(\Omega)$ is the kernel of $\memStrain$ and $\bendStrain$, it follows that $\memStrain\left( \mathcal{V}_h^{\KLS} \right) = \memStrain\left( \mathring{\mathcal{V}}^{\KLS}_h \right)$ and $\bendStrain\left( \mathcal{V}_h^{\KLS} \right) = \bendStrain\left( \mathring{\mathcal{V}}^{\KLS}_h \right)$; hence, for any ${\bf v}_h \in \mathcal{V}^{\KLS}_h$, there exists $\mathring{{\bf v}}_h \in \mathring{\mathcal{V}}^{\KLS}_h$ such that $\surfTens{\memStrain}({\bf v}_h) = \surfTens{\memStrain}(\mathring{\bf v}_h)$ and $\surfTens{\bendStrain}({\bf v}_h) = \surfTens{\bendStrain}(\mathring{\bf v}_h)$. Consequently, if there exists a positive dimensionless constant $\cTrace{,1}^{\KLS} > 0$ such that \eqref{eqn:TI_KLS_1} holds for all ${\bf v}_h \in \mathring{\mathcal{V}}^{\KLS}_h$, then \eqref{eqn:TI_KLS_1} holds with the same constant $\cTrace{,1}^{\KLS}$ for all ${\bf v}_h \in \mathcal{V}^{\KLS}_h$.

      Now consider the generalized eigenproblem: Find $({\bf u}_h, \lambda_h) \in \mathring{\mathcal{V}}^{\KLS}_h \times \mathbb{R}$ such that
      \begin{equation}
        \sum_{E_1 \in \mathcal{E}_{D_1}} \int_{E_1} \frac{h_{E_1}^3}{ \thickness^3 | \mathbb{\constitutive} |} \ersatz_3({\bf v}_h) \ersatz_3(\delta {\bf v}_h) \ d \physBoundary = \lambda_h a^{\KLS}({\bf v}_h, \delta {\bf v}_h)
      \label{eq:Gen_EP_KLS}
      \end{equation}
      for all $\delta {\bf u}_h \in \mathring{\mathcal{V}}^{\KLS}_h$.  Since the bilinear form $a^{\KLS}({\bf v}_h, \delta {\bf v}_h)$ is coercive on $\mathring{\mathcal{V}}^{\KLS}_h$, all eigenvalues of the above generalized eigenproblem are non-negative and finite, and they are finite in number.  Moreover, the min-max theorem states that the max eigenvalue satisfies
      \begin{equation}
      \lambda^{\KLS}_{\textup{max}} = \sup_{\substack{ {\bf v}_h \in \mathring{\mathcal{V}}^{\KLS}_h \\ {\bf v}_h \neq {\bf 0} }} \frac{ \sum_{E_1 \in \mathcal{E}_{D_1}} \int_{E_1} \frac{h_{E_1}^3}{ \thickness^3 | \mathbb{\constitutive} |} \left( \ersatz_3({\bf v}_h) \right)^2 \ d \physBoundary}{a^{\KLS}({\bf v}_h, {\bf v}_h)}. \nonumber
      \end{equation}
        It is easily seen then that the lemma is satisfied for $\cTrace{,1}^{\KLS} = \lambda^{\KLS}_{\textup{max}}/5$.
  \end{proof}
  \label{lemma:TI_KLS}
\end{lemma}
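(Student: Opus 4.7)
The plan is to establish each of the five trace inequalities by the same strategy already hinted at in the partial proof: quotient out the kernel of $a^{\KLS}$, recast the inequality as a finite-dimensional generalized eigenproblem, and invoke the min-max theorem to extract a finite maximum eigenvalue which determines the trace constant.

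First I would observe that the rigid-body space $\text{Rig}^{\KLS}(\Omega)$ is precisely the kernel of $a^{\KLS}(\cdot, \cdot)$ on $\mathcal{V}^{\KLS}_h$, since $a^{\KLS}({\bf v}_h,{\bf v}_h) = 0$ is equivalent to $\surfTens{\memStrain}({\bf v}_h) = \surfTens{\bendStrain}({\bf v}_h) = \surfTens{0}$ by positive-definiteness of $\mathbb{\constitutive}$. Consequently, $a^{\KLS}$ is coercive on the $L^2$-orthogonal complement $\mathring{\mathcal{V}}^{\KLS}_h$, which is finite-dimensional. Moreover, each of the five boundary functionals on the left-hand sides, namely $\ersatz_3(\cdot)$, $\llbracket \moment_{nt}(\cdot) \rrbracket$, $\moment_{nn}(\cdot)$, $\surfVec{\ersatz}^{(\bendStress)}(\cdot)$, and $\surfVec{\ersatz}^{(\memStress)}(\cdot)$, is built solely from the stress measures $\surfTens{\memStress}$ and $\surfTens{\bendStress}$ (see \eqref{eqn:KL_Shell_ersatz}), and therefore vanishes on $\text{Rig}^{\KLS}(\Omega)$. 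The argument made in the excerpt for the strains $\memStrain$ and $\bendStrain$ then extends verbatim: for each ${\bf v}_h \in \mathcal{V}^{\KLS}_h$ there is a representative $\mathring{{\bf v}}_h \in \mathring{\mathcal{V}}^{\KLS}_h$ for which both sides of each inequality agree. Hence it suffices to prove each inequality on the reduced space $\mathring{\mathcal{V}}^{\KLS}_h$.

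For each of the five cases, I would then introduce the associated generalized eigenproblem, analogous to \eqref{eq:Gen_EP_KLS}, in which the left-hand side is the quadratic form obtained from the boundary integrals appearing in that specific inequality (without the $\cTrace{,i}^{\KLS}$) and the right-hand side is $\lambda \cdot a^{\KLS}(\cdot,\cdot)$. Because $\mathring{\mathcal{V}}^{\KLS}_h$ is finite-dimensional, $a^{\KLS}$ is coercive on it, and each boundary quadratic form is non-negative and bounded (which follows from continuity of the trace and boundedness of the stress operators on the discrete space), the spectrum is finite and non-negative. The min-max theorem then characterizes the maximum eigenvalue $\lambda^{\KLS}_{\max,i}$ as the supremum of the Rayleigh quotient, exactly as written for inequality \eqref{eqn:TI_KLS_1}. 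Setting $\cTrace{,i}^{\KLS} = 5\, \lambda^{\KLS}_{\max,i}$ yields the desired factor $\tfrac{1}{5}$ on the right-hand side.

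The main obstacle, if any, is purely bookkeeping: one must keep track of the physical scalings so that each constant $\cTrace{,i}^{\KLS}$ is genuinely dimensionless, which is why the powers of $h_{E}$, $\thickness$, and $|\mathbb{\constitutive}|$ appear as they do in \eqref{eqn:TI_KLS_1}--\eqref{eqn:TI_KLS_5}. In particular, the membrane case \eqref{eqn:TI_KLS_5} carries a $\thickness$ rather than $\thickness^3$ because $\surfTens{\memStress}$ is $O(\thickness)$, while the four bending-flavored inequalities carry $\thickness^3$; verifying the stated scalings reduces to dimensional inspection of the definitions of $\surfTens{\memStress}$, $\surfTens{\bendStress}$, and the ersatz quantities in \eqref{eqn:KL_Shell_ersatz}. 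Once the quotient reduction and the eigenproblem setup are in place, the remainder of the argument is a textbook application of Rayleigh--Ritz, and I do not anticipate any further substantive difficulty.
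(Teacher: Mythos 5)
Your proof is correct and follows essentially the same approach as the paper: quotient out the rigid-body kernel of $a^{\KLS}$, observe that the boundary functionals (being built from the stresses, hence the strains) are invariant under passing to a representative in $\mathring{\mathcal{V}}^{\KLS}_h$, recast the inequality as a finite-dimensional generalized eigenproblem, and identify the constant via the max eigenvalue through Rayleigh--Ritz. Note that your choice $\cTrace{,i}^{\KLS} = 5\,\lambda^{\KLS}_{\max,i}$ is the algebraically correct one, since the eigenvalue characterization gives $Q({\bf v}_h) \le \lambda_{\max}\, a^{\KLS}({\bf v}_h,{\bf v}_h)$ and we need $\tfrac{1}{\cTrace{,i}^{\KLS}} Q({\bf v}_h) \le \tfrac{1}{5} a^{\KLS}({\bf v}_h,{\bf v}_h)$; the paper's stated $\cTrace{,1}^{\KLS} = \lambda^{\KLS}_{\max}/5$ appears to be a typo.
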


\begin{remark}
  From its proof, we see that Lemma~\ref{lemma:TI_KLS} is satisfied for $C^{\KLS}_{\textup{tr},1} = \lambda^{\KLS}_{\textup{max},1}/5$, where $ \lambda^{\KLS}_{\textup{max},1}$ is the largest eigenvalue of the generalized eigenproblem \eqref{eq:Gen_EP_KLS}. Unfortunately, it is very difficult to construct a basis for the space $\mathring{\mathcal{V}}^{\KLS}_h$.  Fortunately, $\lambda^{\KLS}_{\textup{max}}$ is also the largest finite eigenvalue of this simpler generalized eigenproblem: Find $({\bf u}_h, \lambda_h) \in \mathcal{V}^{\KLS}_h \times \mathbb{R}$ such that
 \begin{equation*}
 \sum_{E_1 \in \mathcal{E}_{D_1}} \int_{E_1} \frac{h_{E_1}^3}{ \thickness^3 | \mathbb{\constitutive} |} \ersatz_3({\bf u}_h) \ersatz_3(\delta {\bf u}_h) \ d \physBoundary = \lambda_h a^{\KLS}({\bf u}_h, \delta {\bf u}_h)
 \end{equation*}
 for all $\delta {\bf u}_h \in \mathcal{V}^{\KLS}_h$.  Given a basis $\{ N_i {\bf e}_j \}_{i=1}^n$ and $j = 1,2,3$ for the space $\mathcal{V}^{\KLS}_h$, it then follows that $\lambda^{\KLS}_{\textup{max}}$ may be computed as the largest finite eigenvalue of the generalized matrix eigenproblem $\left({\bf A} - \lambda {\bf B} \right) {\bf x} = {\bf 0}$, where
 \begin{equation*}
 \left[ {\bf A} \right]_{ij} = \sum_{E_1 \in \mathcal{E}_{D_1}} \int_{E_1} \frac{h_{E_1}^3}{\thickness^3 | \mathbb{\constitutive} |} \ersatz_3({\bf u}_h) \ersatz_3(\delta {\bf u}_h) \ d \physBoundary
 \end{equation*}
 and
 \begin{equation*}
 \left[ {\bf B} \right]_{ij} = a^{\KLS}({\bf u}_h, \delta {\bf u}_h).
 \end{equation*}
 Thus, it is tractable to compute an explicit value for the trace constant $\cTrace{,1}^{\KLS}$. In a similar vein, $C^{\KLS}_{\textup{tr},2} = \lambda^{\KLS}_{\textup{max},2}/5$, $C^{\KLS}_{\textup{tr},3} = \lambda^{\KLS}_{\textup{max},3}/5$, $C^{\KLS}_{\textup{tr},4} = \lambda^{\KLS}_{\textup{max},4}/5$, and $C^{\KLS}_{\textup{tr},5} = \lambda^{\KLS}_{\textup{max},5}/5$, where $\lambda^{\KLS}_{\textup{max},i}$, for $i = 1,2,\ldots,5$, correspond to the largest finite eigenvalues of generalized eigenproblems derived from \eqref{eqn:TI_KLS_1}, \eqref{eqn:TI_KLS_2}, \eqref{eqn:TI_KLS_3}, \eqref{eqn:TI_KLS_4}, and \eqref{eqn:TI_KLS_5}, respectively. The associated eigenproblems for these constants can likewise be constructed and solved for explicitly.
\end{remark}

To construct Nitsche's method for the Kirchhoff-Love shell, we must specify suitable linear maps $\epsilon^{\KLS}$ and $\eta^{\KLS}$ such that the generalized trace and Cauchy-Schwarz inequalities appearing in Assumption~\ref{assumption2} are satisfied. We begin by extending the domain of definition of the boundary operator $\mathcal{B}^{\KLS} \colon \tilde{\mathcal{V}}^{\KLS} \rightarrow \left( \mathcal{Q}^{\KLS} \right)^*$, defined in \eqref{eqn:Greens_ID_KLS}, to the enlarged space $\tilde{\mathcal{V}}^{\KLS} + \mathcal{V}^{\KLS}_h$. We accomplish this by expressing this boundary operator as a summation of integrals and point evaluations over the edge meshes and corners, rather than as a single integration and function evaluation in the continuous setting. In particular,
\begin{equation}
  \begin{aligned}
  \left\langle \mathcal{B}^{\KLS} {\bf w}, \mathcal{T}^{\KLS} {\bf v} \right\rangle &= \int_{\physBoundary_1} {\bf T}({\bf w}) \cdot \fullTens{\midsurfDispTest} \ d \physBoundary + \sum_{C \in \cornerSet{D}} \left( \llbracket \moment_{nt}({\bf w}) \rrbracket \outOfPlane{v} \right) \Big|_{C} + \int_{\physBoundary_2} \moment_{nn}({\bf w}) \midsurfRot_n(\fullTens{\midsurfDispTest}) \ d \physBoundary\\
  &= \sum_{E_1 \in \mathcal{E}_{D_1}} \int_{E_1} {\bf T}({\bf w}) \cdot \fullTens{\midsurfDispTest} \ d \physBoundary + \sum_{C \in \cornerSet{D}} \left( \llbracket \moment_{nt}({\bf w}) \rrbracket \outOfPlane{v} \right) \Big|_{C} + \sum_{E_2 \in \mathcal{E}_{D_2}} \int_{E_2} \moment_{nn}({\bf w}) \midsurfRot_n(\fullTens{\midsurfDispTest}) \ d \physBoundary
\end{aligned}
  \label{eqn:bdy_integrals_KLS}
\end{equation}
for all ${\bf w} \in \tilde{\mathcal{V}}^{\KLS}$ and ${\bf v} \in \mathcal{V}^{\KLS}$.

Expressing the duality pairing in this manner permits a trivial extension of the domain of definition of $\mathcal{B}^{\KLS}$ to the enlarged space $\tilde{\mathcal{V}}^{\KLS} + \mathcal{V}^{\KLS}_h$. This extension is necessary because the first set of integrals in the above equation may not be well defined for arbitrary ${\bf w} \in \mathcal{V}_h^{\KLS}$ because the Kirchhoff-Love shell requires third derivatives along the boundary rendering low continuity discretizations inadmissible. However, the second set of integrals is well defined for any piecewise $C^1$-continuous polynomial or rational approximation over the mesh $\mathcal{K}$ since these types of discretizations are $C^\infty$ over each edge.

Next, we define the linear map $\epsilon^{\KLS}: \textup{dom}(\epsilon^{\KLS}) \subseteq \left( \mathcal{Q}^{\KLS} \right)^* \rightarrow \mathcal{Q}^{\KLS}$ through its action:
\begin{equation*}
  \begin{aligned}
    \left\langle \left( \epsilon^{\KLS} \right)^{-1} {\bf w}, {\bf v} \right\rangle &= \thickness^3 | \mathbb{\constitutive} | \left( \sum_{E_1 \in \mathcal{E}_{D_1}} \int_{E_1} \frac{\cPen{,1}^{\KLS}}{h^3_{E_1}} w_3 v_3 \ d \physBoundary + \sum_{C \in \cornerSet{D}} \frac{\cPen{,2}^{\KLS}}{h^2_{C}} ( w_3 v_3 ) \Big|_{C} + \sum_{E_2 \in \mathcal{E}_{D_2}} \int_{E_2} \frac{\cPen{,3}^{\KLS}}{h_{E_2}} \theta_n({\bf w}) \theta_n({\bf v}) \ d \physBoundary \right)\\
    &\phantom{=} + \sum_{E_1 \in \mathcal{E}_{D_1}} \int_{E_1} \frac{\cPen{,4}^{\KLS} \thickness | \mathbb{\constitutive} |}{h_{E_1}} \surfVec{w} \cdot \surfVec{v} \ d \physBoundary
  \end{aligned}
\end{equation*}
for all ${\bf w}, {\bf v} \in \mathcal{Q}^{\KLS}$, where $\cPen{,1}^{\KLS} > \cTrace{,1}^{\KLS}$, $\cPen{,2}^{\KLS} > \cTrace{,2}^{\KLS}$, $\cPen{,3}^{\KLS} > \cTrace{,3}^{\KLS}$, and $\cPen{,4}^{\KLS} > \cTrace{,4}^{\KLS} + \cTrace{,5}^{\KLS}$ are positive dimensionless constants.

\begin{remark}
The choice of penalty constants presented here in this paper is not the only stable choice.  For user-specified dimensionless constants $\alpha_1 > 0$, $\alpha_2 > 0$, $\alpha_3 > 0$, $\alpha_4 > 0$, and $\alpha_5 > 0$, we can alternatively select $\cPen{,1}^{\KLS} > \alpha_1 \cTrace{}^{\KLS}$, $\cPen{,2}^{\KLS} > \alpha_2 \cTrace{}^{\KLS}$, $\cPen{,3}^{\KLS} > \alpha_3 \cTrace{}^{\KLS}$, and $\cPen{,4}^{\KLS} > \left( \alpha_4 + \alpha_5 \right) \cTrace{}^{\KLS}$, where $\cTrace{}^{\KLS} > 0$ is a dimensionless constant such that
  \begin{equation*}
  \begin{aligned}
    \frac{1}{\thickness^3 | \mathbb{\constitutive} |} &\left( \sum_{E_1 \in \mathcal{E}_{D_1}} \int_{E_1} \frac{h_{E_1}^3}{\alpha_1} \ersatz_3({\bf w}) \ersatz_3({\bf v}) \ d \physBoundary + \sum_{C \in \cornerSet{D}} \frac{h_C^2}{\alpha_2} ( \llbracket \moment_{nt}({\bf w}) \rrbracket \llbracket \moment_{nt}({\bf v}) \rrbracket ) \Big|_C + \sum_{E_2 \in \mathcal{E}_{D_2}} \int_{E_2} \frac{h_{E_2}}{\alpha_3} \moment_{nn}({\bf w}) \moment_{nn}({\bf v}) \ d \physBoundary \right. \\
    &\phantom{=} \left. + \sum_{E_1 \in \mathcal{E}_{D_1}} \int_{E_1} \frac{h_{E_1}}{\alpha_4} \surfVec{\ersatz}^{(\bendStress)}({\bf w}) \cdot \surfVec{\ersatz}^{(\bendStress)}({\bf v}) \ d \physBoundary \right) + \sum_{E_1 \in \mathcal{E}_{D_1}} \int_{E_1} \frac{h_{E_1}}{\thickness | \mathbb{\constitutive} | \alpha_5} \surfVec{\ersatz}^{(\memStress)}({\bf w}) \cdot \surfVec{\ersatz}^{(\memStress)}({\bf v}) \ d \physBoundary \le \cTrace{}^{\KLS} a^{\KLS}({\bf v}_h, {\bf v}_h)
  \end{aligned}
  \end{equation*}
  for all ${\bf v}_{h} \in \mathcal{V}^{\KLS}_h$.  The advantage of this approach is that only one trace constant, namely, $\cTrace{}^{\KLS}$, must be estimated.  The disadvantage of this approach is that $\alpha_1$, $\alpha_2$, $\alpha_3$, $\alpha_4$, and $\alpha_5$, which control the relative weightings of the out-of-plane displacement boundary condition along $\Gamma_{D_1}$, the displacement boundary condition at corners in $\cornerSet{D}$, the rotation boundary conditions along $\Gamma_{D_2}$, and the in-plane displacement boundary conditions along $\Gamma_{D_1}$, respectively, must be specified.
\label{remark:eig_KLS}
\end{remark}

Let $\eta^{\KLS} \colon \text{dom}(\eta^{\KLS}) \subseteq \left( \mathcal{Q}^{\KLS} \right)^* \rightarrow \mathcal{Q}^{\KLS}$ be a densely defined, positive, self-adjoint linear map that is defined on the enlarged space
\begin{equation*}
  \left\{ \mathcal{B}^{\KLS} {\bf v} \colon {\bf v} \in \tilde{\mathcal{V}}^{\KLS} + \mathcal{V}^{\KLS}_h \right\}
\end{equation*}
and satisfies
\begin{equation*}
  \begin{aligned}
    \left\langle \mathcal{B}^{\KLS} {\bf w}, \eta^{\KLS} \mathcal{B}^{\KLS} {\bf v} \right\rangle &=  \frac{1}{\thickness^3 | \mathbb{\constitutive} |} \left( \sum_{E_1 \in \mathcal{E}_{D_1}} \int_{E_1} \frac{h_{E_1}^3}{\cTrace{,1}^{\KLS}} \ersatz_3({\bf w}) \ersatz_3({\bf v}) \ d \physBoundary + \sum_{C \in \cornerSet{D}} \frac{h_C^2}{\cTrace{,2}^{\KLS}} ( \llbracket \moment_{nt}({\bf w}) \rrbracket \llbracket \moment_{nt}({\bf v}) \rrbracket ) \Big|_C \right.\\
    &\phantom{=} \left. + \sum_{E_2 \in \mathcal{E}_{D_2}} \int_{E_2} \frac{h_{E_2}}{\cTrace{,3}^{\KLS}} \moment_{nn}({\bf w}) \moment_{nn}({\bf v}) \ d \physBoundary + \sum_{E_1 \in \mathcal{E}_{D_1}} \int_{E_1} \frac{h_{E_1}}{\cTrace{,4}^{\KLS}} \surfVec{\ersatz}^{(\bendStress)}({\bf w}) \cdot \surfVec{\ersatz}^{(\bendStress)}({\bf v}) \ d \physBoundary \right)\\
    &\phantom{=} + \sum_{E_1 \in \mathcal{E}_{D_1}} \int_{E_1} \frac{h_{E_1}}{\thickness | \mathbb{\constitutive} | \cTrace{,5}^{\KLS}} \surfVec{\ersatz}^{(\memStress)}({\bf w}) \cdot \surfVec{\ersatz}^{(\memStress)}({\bf v}) \ d \physBoundary
  \end{aligned}
\end{equation*}
for all ${\bf w}, {\bf v} \in \tilde{\mathcal{V}}^{\KLS} + \mathcal{V}^{\KLS}_h$. After these choices of linear maps have been made, the generalized trace and Cauchy-Schwarz inequalities appearing in Assumption~\ref{assumption2} are satisfied.

\begin{lemma}[Generalized Trace Inequality for the Kirchhoff-Love Shell]
  It holds that
  \begin{equation*}
    \left\langle \mathcal{B}^{\KLS} {\bf v}_h, \eta^{\KLS} \mathcal{B}^{\KLS} {\bf v}_h \right\rangle \leq a^{\KLS}({\bf v}_h, {\bf v}_h)
  \end{equation*}
  for all ${\bf v}_h \in \mathcal{V}^{\KLS}_h$.
  \begin{proof}
    The proof follows immediately from Lemma~\ref{lemma:TI_KLS} and the definition of $\eta^{\KLS}$.
  \end{proof}
  \label{lemma:TI_KLS_gen}
\end{lemma}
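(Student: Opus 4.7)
The plan is to unfold the left-hand side using the explicit definition of $\eta^{\KLS}$ together with the decomposition \eqref{eqn:bdy_integrals_KLS} of $\mathcal{B}^{\KLS}$, and then invoke the five trace inequalities of Lemma~\ref{lemma:TI_KLS} to bound each resulting summand by $\tfrac{1}{5} a^{\KLS}({\bf v}_h,{\bf v}_h)$.

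First, I would substitute ${\bf w} = {\bf v} = {\bf v}_h$ into the defining action of $\eta^{\KLS}$ stated just above. This produces exactly five summands: the squared out-of-plane ersatz traction on $\mathcal{E}_{D_1}$, the squared corner twisting-moment jump on $\cornerSet{D}$, the squared normal bending moment on $\mathcal{E}_{D_2}$, the squared in-plane bending ersatz force $\surfVec{\ersatz}^{(\bendStress)}({\bf v}_h)$ on $\mathcal{E}_{D_1}$, and the squared in-plane membrane ersatz force $\surfVec{\ersatz}^{(\memStress)}({\bf v}_h)$ on $\mathcal{E}_{D_1}$. Each of these quantities coincides precisely with the left-hand side of one of the inequalities \eqref{eqn:TI_KLS_1}--\eqref{eqn:TI_KLS_5}. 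This alignment is by design: the edgewise and cornerwise weights $h_{E_1}^3/(\cTrace{,1}^{\KLS}\thickness^3|\mathbb{\constitutive}|)$, $h_C^2/(\cTrace{,2}^{\KLS}\thickness^3|\mathbb{\constitutive}|)$, $h_{E_2}/(\cTrace{,3}^{\KLS}\thickness^3|\mathbb{\constitutive}|)$, $h_{E_1}/(\cTrace{,4}^{\KLS}\thickness^3|\mathbb{\constitutive}|)$, and $h_{E_1}/(\cTrace{,5}^{\KLS}\thickness|\mathbb{\constitutive}|)$ baked into $\eta^{\KLS}$ were calibrated exactly so that each summand matches a left-hand side of Lemma~\ref{lemma:TI_KLS} with its corresponding constant.

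Second, applying each of \eqref{eqn:TI_KLS_1}--\eqref{eqn:TI_KLS_5} in turn bounds each of the five summands by $\tfrac{1}{5} a^{\KLS}({\bf v}_h,{\bf v}_h)$; summing these contributions gives the advertised bound of $a^{\KLS}({\bf v}_h,{\bf v}_h)$. The normalization $\cTrace{,i}^{\KLS} = \lambda^{\KLS}_{\textup{max},i}/5$ employed in the proof of Lemma~\ref{lemma:TI_KLS} was introduced precisely so that the five inequalities can be added without accruing a superfluous multiplicative factor.

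There is no substantive obstacle. All the analytically delicate work---relating scaled boundary and corner norms of the ersatz forces, twisting-moment jumps, and bending moments back to the shell energy $a^{\KLS}(\cdot,\cdot)$ via generalized eigenproblems over the rigid-body-mode complement $\mathring{\mathcal{V}}^{\KLS}_h$---was already discharged in Lemma~\ref{lemma:TI_KLS}. The present lemma is, in effect, a bookkeeping consequence of that calibration together with the definition of $\eta^{\KLS}$, and its proof is a one-line expansion followed by five termwise applications of the previous lemma.
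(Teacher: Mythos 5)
Your proposal is correct and is precisely the argument the paper intends: expand $\left\langle \mathcal{B}^{\KLS} {\bf v}_h, \eta^{\KLS} \mathcal{B}^{\KLS} {\bf v}_h \right\rangle$ via the defining action of $\eta^{\KLS}$ with ${\bf w}={\bf v}={\bf v}_h$ into the five weighted boundary and corner terms, bound each by $\tfrac{1}{5}a^{\KLS}({\bf v}_h,{\bf v}_h)$ using Lemma~\ref{lemma:TI_KLS}, and sum. The paper states this only as ``follows immediately from Lemma~\ref{lemma:TI_KLS} and the definition of $\eta^{\KLS}$,'' and you have simply spelled out the same one-line argument.
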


\begin{lemma}[Generalized Cauchy-Schwarz Inequality for the Kirchhoff-Love Shell]
  Let $\cPen{,1}^{\KLS} = \CSconstant_1^2 \cTrace{,1}^{\KLS}$, $\cPen{,2}^{\KLS} = \CSconstant_2^2 \cTrace{,2}^{\KLS}$, $\cPen{,3}^{\KLS} = \CSconstant_3^2 \cTrace{,3}^{\KLS}$, and $\cPen{,4}^{\KLS} = \CSconstant_4^2 \max(\cTrace{,4}^{\KLS},\cTrace{,5}^{\KLS})$, where $\CSconstant_1, \CSconstant_2, \CSconstant_3, \CSconstant_4 \in (1,\infty)$. Then
  \begin{equation*}
    \left| \left\langle \mathcal{B}^{\KLS} {\bf v}, \mathcal{T}^{\KLS} {\bf w} \right\rangle \right| \le \frac{1}{\CSconstant} \left\langle \mathcal{B}^{\KLS} {\bf v}, \eta^{\KLS} \mathcal{B}^{\KLS} {\bf v} \right\rangle^{1/2} \left\langle \left( \epsilon^{\KLS} \right)^{-1} \mathcal{T}^{\KLS} {\bf w}, \mathcal{T}^{\KLS} {\bf w} \right\rangle^{1/2}
  \end{equation*}
  for all ${\bf v}, {\bf w} \in \tilde{\mathcal{V}}^{\KLS} + \mathcal{V}^{\KLS}_h$, where $\CSconstant = \min(\CSconstant_1,\CSconstant_2,\CSconstant_3,\CSconstant_4)$.
  \begin{proof}
    Recall \eqref{eqn:bdy_integrals_KLS} and the ersatz force decomposition presented in \eqref{eqn:KL_Shell_ersatz}. We then write
    \begin{equation}
      \begin{aligned}
        \left\langle \mathcal{B}^{\KLS} {\bf w}, \mathcal{T}^{\KLS} {\bf v} \right\rangle &= \sum_{E_1 \in \mathcal{E}_{D_1}} \int_{E_1} \ersatz_3({\bf w}) v_3 \ d \physBoundary + \sum_{C \in \cornerSet{D}} \left( \llbracket \moment_{nt}({\bf w}) \rrbracket \outOfPlane{v} \right) \Big|_{C} + \sum_{E_2 \in \mathcal{E}_{D_2}} \int_{E_2} \moment_{nn}({\bf w}) \midsurfRot_n(\fullTens{\midsurfDispTest}) \ d \physBoundary\\
        &\phantom{=} + \sum_{E_1 \in \mathcal{E}_{D_1}} \int_{E_1} \surfVec{\ersatz}^{(\bendStress)}({\bf w}) \cdot \surfVec{v} \ d \physBoundary + \sum_{E_1 \in \mathcal{E}_{D_1}} \int_{E_1} \surfVec{\ersatz}^{(\memStress)}({\bf w}) \cdot \surfVec{v} \ d \physBoundary.
      \end{aligned}
      \label{eqn:KLS_BvTw_CS_proof}
    \end{equation}
    We individually bound these five terms in \eqref{eqn:KLS_BvTw_CS_proof} by utilizing standard continuous ($(f, g)_{L^2(D)} \leq \| f \|_{L^2(D)} \| g \|_{L^2(D)}$ for $f, g \in L^2(D)$) and discrete ($|(x,y)| \leq \| x \|_2 \| y \|_2$ for $x, y \in \mathbb{R}^n$) Cauchy-Schwarz inequalities. The first term is bounded according to
    \begin{equation*}
      \sum_{E_1 \in \mathcal{E}_{D_1}} \int_{E_1} \ersatz_3({\bf w}) v_3 \ d \physBoundary \le \frac{1}{\CSconstant_1} \left( \sum_{E_1 \in \mathcal{E}_{D_1}} \int_{E_1} \frac{h_{E_1}^3}{\cTrace{,1}^{\KLS} \thickness^3 | \mathbb{\constitutive} |} \outOfPlane{\ersatz}({\bf w}) \ d \physBoundary \right)^{1/2} \left( \sum_{E_1 \in \mathcal{E}_{D_1}} \int_{E_1} \frac{\cPen{,1}^{\KLS} \thickness^3 | \mathbb{\constitutive} |}{h_{E_1}^3} v_{3} \ d \physBoundary \right)^{1/2}.
    \end{equation*}
    The second term is bounded according to the following relationship:
    \begin{equation*}
      \sum_{C \in \cornerSet{D}} \left( \llbracket \moment_{nt}({\bf w}) \rrbracket \outOfPlane{v} \right) \Big|_{C} \le \frac{1}{\CSconstant_2} \left( \sum_{C \in \cornerSet{D}} \frac{h_C^2}{\cTrace{,2}^{\KLS} \thickness^3 | \mathbb{\constitutive} |} \llbracket \moment_{nt}({\bf w}) \rrbracket^2 \Big|_{C} \right)^{1/2} \left( \sum_{C \in \cornerSet{D}} \frac{\cPen{,2}^{\KLS} \thickness^3 | \mathbb{\constitutive} |}{h_C^2} ( \outOfPlane{v} )^2 \Big|_{C} \right)^{1/2}.
    \end{equation*}
    The third term is bounded according to
    \begin{equation*}
      \begin{aligned}
        \sum_{E_2 \in \mathcal{E}_{D_2}} \int_{E_2} \moment_{nn}({\bf w}) \midsurfRot_n({\bf v}) \ d \physBoundary &\le \frac{1}{\CSconstant_3} \left( \sum_{E_2 \in \mathcal{E}_{D_2}} \int_{E_2} \frac{h_{E_2}}{\cTrace{,3}^{\KLS} \thickness^3 | \mathbb{\constitutive} |} ( \moment_{nn}({\bf w}) )^2 \ d \physBoundary \right)^{1/2} \left( \sum_{E_2 \in \mathcal{E}_{D_2}} \int_{E_2} \frac{\cPen{,3}^{\KLS} \thickness^3 | \mathbb{\constitutive} |}{h_{E_2}} ( \midsurfRot_n({\bf v}) )^2 \ d \physBoundary \right)^{1/2}.
      \end{aligned}
    \end{equation*}
    The fourth term is bounded according to
    \begin{equation*}
      \begin{aligned}
        \sum_{E_1 \in \mathcal{E}_{D_1}} \int_{E_1} \surfVec{\ersatz}^{(B)}({\bf w}) \cdot \surfVec{v} \ d \physBoundary &\le \frac{1}{\CSconstant_4} \left( \sum_{E_1 \in \mathcal{E}_{D_1}} \int_{E_1} \frac{h_{E_2}}{\cTrace{,4}^{\KLS} \thickness^3 | \mathbb{\constitutive} |} \left| \surfVec{\ersatz}^{(B)}({\bf w}) \right|^2 \ d \physBoundary \right)^{1/2} \left( \sum_{E_1 \in \mathcal{E}_{D_1}} \int_{E_1} \frac{\cPen{,4}^{\KLS} \thickness^3 | \mathbb{\constitutive} |}{h_{E_2}} \left| \surfVec{v} \right|^2 \ d \physBoundary \right)^{1/2}.
      \end{aligned}
    \end{equation*}
    and finally, the fifth term is bounded according to
    \begin{equation*}
      \begin{aligned}
        \sum_{E_1 \in \mathcal{E}_{D_1}} \int_{E_1} \surfVec{\ersatz}^{(A)}({\bf w}) \cdot \surfVec{v} \ d \physBoundary &\le \frac{1}{\CSconstant_4} \left( \sum_{E_1 \in \mathcal{E}_{D_1}} \int_{E_1} \frac{h_{E_2}}{\cTrace{,5}^{\KLS} \thickness | \mathbb{\constitutive} |} \left| \surfVec{\ersatz}^{(A)}({\bf w}) \right|^2 \ d \physBoundary \right)^{1/2} \left( \sum_{E_1 \in \mathcal{E}_{D_1}} \int_{E_1} \frac{\cPen{,4}^{\KLS} \thickness | \mathbb{\constitutive} |}{h_{E_2}} \left| \surfVec{v} \right|^2 \ d \physBoundary \right)^{1/2}.
      \end{aligned}
    \end{equation*}
    Combining these bounds with the bounds $1/\CSconstant_1, 1/\CSconstant_2, 1/\CSconstant_3, 1/\CSconstant_4 < 1/\CSconstant$, where $\CSconstant = \min(\CSconstant_1,\CSconstant_2,\CSconstant_3,\CSconstant_4)$, yields the following:
    \begin{equation*}
      \begin{aligned}
        \left\langle \mathcal{B}^{\KLS} {\bf v}, \mathcal{T}^{\KLS} {\bf w} \right\rangle &\le \frac{1}{\CSconstant_1} \left( \sum_{E_1 \in \mathcal{E}_{D_1}} \int_{E_1} \frac{h_{E_1}^3}{\cTrace{,1}^{\KLS} \thickness^3 | \mathbb{\constitutive} |} \outOfPlane{\ersatz}({\bf w}) \ d \physBoundary \right)^{1/2} \left( \sum_{E_1 \in \mathcal{E}_{D_1}} \int_{E_1} \frac{\cPen{,1}^{\KLS} \thickness^3 | \mathbb{\constitutive} |}{h_{E_1}^3} v_{3} \ d \physBoundary \right)^{1/2}\\
        &\phantom{\le} + \frac{1}{\CSconstant_2} \left( \sum_{C \in \cornerSet{D}} \frac{h_C^2}{\cTrace{,2}^{\KLS} \thickness^3 | \mathbb{\constitutive} |} \llbracket \moment_{nt}({\bf w}) \rrbracket^2 \Big|_{C} \right)^{1/2} \left( \sum_{C \in \cornerSet{D}} \frac{\cPen{,2}^{\KLS} \thickness^3 | \mathbb{\constitutive} |}{h_C^2} ( \outOfPlane{v} )^2 \Big|_{C} \right)^{1/2}\\
        &\phantom{\le} + \frac{1}{\CSconstant_3} \left( \sum_{E_2 \in \mathcal{E}_{D_2}} \int_{E_2} \frac{h_{E_2}}{\cTrace{,3}^{\KLS} \thickness^3 | \mathbb{\constitutive} |} ( \moment_{nn}({\bf w}) )^2 \ d \physBoundary \right)^{1/2} \left( \sum_{E_2 \in \mathcal{E}_{D_2}} \int_{E_2} \frac{\cPen{,3}^{\KLS} \thickness^3 | \mathbb{\constitutive} |}{h_{E_2}} ( \midsurfRot_n({\bf v}) )^2 \ d \physBoundary \right)^{1/2}\\
        &\phantom{\le} + \frac{1}{\CSconstant_4} \left( \sum_{E_1 \in \mathcal{E}_{D_1}} \int_{E_1} \frac{h_{E_2}}{\cTrace{,4}^{\KLS} \thickness^3 | \mathbb{\constitutive} |} \left| \surfVec{\ersatz}^{(B)}({\bf w}) \right|^2 \ d \physBoundary \right)^{1/2} \left( \sum_{E_1 \in \mathcal{E}_{D_1}} \int_{E_1} \frac{\cPen{,4}^{\KLS} \thickness^3 | \mathbb{\constitutive} |}{h_{E_2}} \left| \surfVec{v} \right|^2 \ d \physBoundary \right)^{1/2} \\
        &\phantom{\le} + \frac{1}{\CSconstant_4} \left( \sum_{E_1 \in \mathcal{E}_{D_1}} \int_{E_1} \frac{h_{E_2}}{\cTrace{,5}^{\KLS} \thickness | \mathbb{\constitutive} |} \left| \surfVec{\ersatz}^{(A)}({\bf w}) \right|^2 \ d \physBoundary \right)^{1/2} \left( \sum_{E_1 \in \mathcal{E}_{D_1}} \int_{E_1} \frac{\cPen{,4}^{\KLS} \thickness | \mathbb{\constitutive} |}{h_{E_2}} \left| \surfVec{v} \right|^2 \ d \physBoundary \right)^{1/2}.
      \end{aligned}
    \end{equation*}
    Taking absolute values of both sides followed by an application of the discrete Cauchy-Schwarz inequality yields the desired result.
  \end{proof}
  \label{lemma:CS_KLS}
\end{lemma}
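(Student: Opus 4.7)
The plan is to expand the duality pairing $\langle \mathcal{B}^{\KLS} {\bf v}, \mathcal{T}^{\KLS} {\bf w} \rangle$ into its five constituent contributions using the edge/corner representation in \eqref{eqn:bdy_integrals_KLS} together with the split of the in-plane ersatz force $\surfVec{\ersatz}({\bf v}) = \surfVec{\ersatz}^{(\memStress)}({\bf v}) + \surfVec{\ersatz}^{(\bendStress)}({\bf v})$ given by \eqref{eqn:KL_Shell_ersatz}. Concretely, this yields an out-of-plane edge piece $\sum_{E_1 \in \mathcal{E}_{D_1}} \int_{E_1} \ersatz_3({\bf v}) w_3 \, d\physBoundary$, a corner piece $\sum_{C \in \cornerSet{D}} (\llbracket \moment_{nt}({\bf v}) \rrbracket w_3)|_C$, a normal-moment piece $\sum_{E_2 \in \mathcal{E}_{D_2}} \int_{E_2} \moment_{nn}({\bf v}) \midsurfRot_n({\bf w}) \, d\physBoundary$, and two in-plane pieces $\sum_{E_1 \in \mathcal{E}_{D_1}} \int_{E_1} \surfVec{\ersatz}^{(\bendStress)}({\bf v}) \cdot \surfVec{w} \, d\physBoundary$ and $\sum_{E_1 \in \mathcal{E}_{D_1}} \int_{E_1} \surfVec{\ersatz}^{(\memStress)}({\bf v}) \cdot \surfVec{w} \, d\physBoundary$ whose sum encodes $\surfVec{\ersatz}({\bf v}) \cdot \surfVec{w}$.

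Next, I would apply the pointwise (continuous) Cauchy-Schwarz inequality on each edge or at each corner, carefully introducing the weighting factors dictated by $\eta^{\KLS}$ and $(\epsilon^{\KLS})^{-1}$. For the first term, for instance, I would rewrite the integrand as the product of $\sqrt{h_{E_1}^{3}/(\cTrace{,1}^{\KLS}\thickness^{3}|\mathbb{\constitutive}|)}\, \ersatz_3({\bf v})$ and $\sqrt{\cPen{,1}^{\KLS}\thickness^{3}|\mathbb{\constitutive}|/h_{E_1}^{3}}\, w_3$, and exploit the identity $\cPen{,1}^{\KLS}/\cTrace{,1}^{\KLS} = \CSconstant_1^{2}$ to pull out the factor $1/\CSconstant_1$. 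The corner, normal-moment, and bending in-plane terms are reweighted in exactly the same fashion to extract the factors $1/\CSconstant_2$, $1/\CSconstant_3$, and $1/\CSconstant_4$, respectively.

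After bounding each of the five pieces in this manner, I would sum over the relevant edges and corners and invoke the discrete Cauchy-Schwarz inequality $|\sum_i x_i y_i| \le (\sum_i x_i^{2})^{1/2} (\sum_i y_i^{2})^{1/2}$ to consolidate the five resulting products of square roots into a single product. By the construction of $\eta^{\KLS}$ and $(\epsilon^{\KLS})^{-1}$, the first consolidated factor is exactly $\langle \mathcal{B}^{\KLS}{\bf v}, \eta^{\KLS}\mathcal{B}^{\KLS}{\bf v}\rangle^{1/2}$ and the second is exactly $\langle (\epsilon^{\KLS})^{-1}\mathcal{T}^{\KLS}{\bf w}, \mathcal{T}^{\KLS}{\bf w}\rangle^{1/2}$. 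Finally, replacing each extracted $1/\CSconstant_i$ by the uniform upper bound $1/\CSconstant$ with $\CSconstant = \min_i\CSconstant_i$ and taking absolute values on the left yields the claimed inequality.

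The only place where I expect genuine technical care is the membrane in-plane contribution. While $\eta^{\KLS}$ weights $\surfVec{\ersatz}^{(\memStress)}({\bf v})$ with a $\thickness^{-1}$ scaling and $\surfVec{\ersatz}^{(\bendStress)}({\bf v})$ with a $\thickness^{-3}$ scaling, both must ultimately be paired against the same single in-plane penalty contribution in $(\epsilon^{\KLS})^{-1}$, namely $\sum_{E_1}\int h_{E_1}^{-1}\cPen{,4}^{\KLS}\thickness|\mathbb{\constitutive}|\, \surfVec{w}\cdot\surfVec{w}\, d\physBoundary$. The prescription $\cPen{,4}^{\KLS} = \CSconstant_4^{2}\max(\cTrace{,4}^{\KLS},\cTrace{,5}^{\KLS})$ is designed precisely so that, after a suitable splitting of this penalty term between the two halves of the pairing, each half dominates its corresponding $\eta^{\KLS}$-weighted ersatz quantity with extracted factor no worse than $1/\CSconstant_4$, uniformly in $\thickness$. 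Once this bookkeeping is in place, the remainder is routine weighted Cauchy-Schwarz.
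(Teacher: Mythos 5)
Your proposal follows the paper's proof essentially step for step: decompose $\left\langle \mathcal{B}^{\KLS}{\bf v}, \mathcal{T}^{\KLS}{\bf w}\right\rangle$ into the five edge/corner contributions using \eqref{eqn:bdy_integrals_KLS} and the ersatz-force split \eqref{eqn:KL_Shell_ersatz}, apply weighted continuous Cauchy--Schwarz to each piece so that the ratio $\cPen{,i}^{\KLS}/\cTrace{,i}^{\KLS}$ yields the factor $1/\CSconstant_i$, then consolidate over the five pieces with the discrete Cauchy--Schwarz inequality and replace each $1/\CSconstant_i$ by $1/\CSconstant = 1/\min_i\CSconstant_i$. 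You are also right to single out the in-plane contribution as the delicate step: the bending piece $\surfVec{\ersatz}^{(\bendStress)}$ and the membrane piece $\surfVec{\ersatz}^{(\memStress)}$ carry different $\thickness$-scalings in $\eta^{\KLS}$ yet both must be controlled by the single in-plane penalty term in $\left(\epsilon^{\KLS}\right)^{-1}$, which is exactly where the choice $\cPen{,4}^{\KLS} = \CSconstant_4^2 \max\!\left(\cTrace{,4}^{\KLS},\cTrace{,5}^{\KLS}\right)$ is exploited; the paper handles these two terms by applying Cauchy--Schwarz with separate $\thickness^3$- and $\thickness$-weighted second factors before the final discrete Cauchy--Schwarz, which is a closely related bookkeeping to the ``splitting'' you describe.
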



\subsection{Nitsche's Method}

Following the abstract variational framework of Section~\ref{sec:Nitsche} and with the appropriate definitions of $\epsilon^{\KLS}$, $\eta^{\KLS}$, and $\mathcal{B}^{\KLS}$ in place, our Nitsche-based formulation for the Kirchhoff-Love shell is posed as:

\begin{mybox}[\emph{Nitsche's Method for the Kirchhoff-Love Shell}]
  \vspace{-7pt}
$$
(N^{\KLS}_h) \left\{ \hspace{5pt}
\parbox{6.00in}{
Given $\linFunctional^{\KLS} \in \left( \mathcal{V}^{\KLS} \right)^*$ and $\left( \hat{\bf u}, \hat{\theta}_n \right) \in \mathcal{Q}^{\KLS}$, find ${\bf u}_h \in \mathcal{V}^{\KLS}_h$ such that

\begin{equation*}
  \begin{aligned}
    a^{\KLS}_h({\bf u}_h, \delta {\bf u}_h) &= \underbrace{ \int_{\physDomain} \applied{\textbf{\forcing}} \cdot \delta {\bf u}_h \ d \physDomain + \int_{\physBoundary_{N_1}} \applied{\bf T} \cdot \delta {\bf u}_h \ d \physBoundary + \sum_{C \in \cornerSet{N}} \left. \left( \applied{\force} \delta u_{3,h} \right) \right|_{C} + \int_{\physBoundary_{N_2}} \applied{\moment}_{nn} \midsurfRot_n(\delta {\bf u}_h) \ d \physBoundary}_{\left\langle \linFunctional^{\KLS}, \delta {\bf u}_h \right\rangle}\\
    &\phantom{=} {\color{ForestGreen} \underbrace{ - \sum_{E_1 \in \mathcal{E}_{D_1}} \int_{E_1} {\bf T}(\delta {\bf u}_h) \cdot \applied{\bf u} \ d \physBoundary - \sum_{C \in \cornerSet{D}} \left( \llbracket \moment_{nt}(\delta {\bf u}_h) \rrbracket \applied{u}_3 \right) \Big|_{C} - \sum_{E_2 \in \mathcal{E}_{D_2}} \int_{E_2} \moment_{nn}(\delta {\bf u}_h) \applied{\midsurfRot}_n \ d \physBoundary}_{\text{Symmetry Terms}} }\\
    &\phantom{=} \clipbox{-2 0 400 0}{${\color{Orchid} \underbrace{ + \thickness^3 | \mathbb{\constitutive} | \left( \sum_{E_1 \in \mathcal{E}_{D_1}} \int_{E_1} \frac{\cPen{,1}^{\KLS}}{h^3_{E_1}} \delta u_{3,h} \applied{u}_{3} \ d \physBoundary + \sum_{C \in \cornerSet{D}} \frac{\cPen{,2}^{\KLS}}{h^2_{C}} ( \delta u_{3,h} \applied{u}_{3} ) \Big|_{C} \right. \hspace{40em} }}$} \\
    &\phantom{=} \hspace{2pt} \clipbox{10 0 -2 0}{$ {\color{Orchid} \underbrace{ \hspace{1em} \left. + \sum_{E_2 \in \mathcal{E}_{D_2}} \int_{E_2} \frac{\cPen{,3}^{\KLS}}{h_{E_2}} \theta_n(\delta {\bf u}_h) \applied{\theta}_n \ d \physBoundary \right) + \sum_{E_1 \in \mathcal{E}_{D_1}} \int_{E_1} \frac{\cPen{,4}^{\KLS} \thickness | \mathbb{\constitutive} |}{h_{E_1}} \delta \surfVec{u}_h \cdot \surfVec{\applied{u}} \ d \physBoundary}_{\text{Penalty Terms}} }$}
      \end{aligned}
\label{eqn:KL_Shell_Weak_Nitsche}
\end{equation*}
for every $\delta {\bf u}_h \in \mathcal{V}^{\KLS}_h$, where $a^{\KLS}_h \colon \left( \tilde{\mathcal{V}}^{\KLS} + \mathcal{V}^{\KLS}_h \right) \times \left( \tilde{\mathcal{V}}^{\KLS} + \mathcal{V}^{\KLS}_h \right) \rightarrow \mathbb{R}$ is the bilinear form defined by
\begin{equation*}
  \begin{aligned}
    a^{\KLS}_h({\bf u}_h, \delta {\bf u}_h) &= \underbrace{ \int_{\physDomain} \surfTens{\memStress}({\bf u}_h) \colon \surfTens{\memStrain}(\delta {\bf u}_h) \ d \physDomain + \int_{\physDomain} \surfTens{\bendStress}({\bf u}_h) \colon \surfTens{\bendStrain}(\delta {\bf u}_h) \ d \physDomain }_{a^{\KLS}({\bf u}_h, \delta {\bf u}_h)}\\
    &\phantom{=} {\color{Cerulean} \underbrace{ - \sum_{E_1 \in \mathcal{E}_{D_1}} \int_{E_1} {\bf T}({\bf u}_h) \cdot \delta {\bf u}_h \ d \physBoundary - \sum_{C \in \cornerSet{D}} \left( \llbracket \moment_{nt}({\bf u}_h) \rrbracket \delta u_{3,h} \right) \Big|_{C} - \sum_{E_2 \in \mathcal{E}_{D_2}} \int_{E_2} \moment_{nn}({\bf u}_h) \midsurfRot_n(\delta {\bf u}_h) \ d \physBoundary }_{\text{Consistency Terms}} }\\
    &\phantom{=} {\color{ForestGreen} \underbrace{ - \sum_{E_1 \in \mathcal{E}_{D_1}} \int_{E_1} {\bf T}(\delta {\bf u}_h) \cdot {\bf u}_h \ d \physBoundary - \sum_{C \in \cornerSet{D}} \left( \llbracket \moment_{nt}(\delta {\bf u}_h) \rrbracket u_{3,h} \right) \Big|_{C} - \sum_{E_2 \in \mathcal{E}_{D_2}} \int_{E_2} \moment_{nn}(\delta {\bf u}_h) \midsurfRot_n({\bf u}_h) \ d \physBoundary }_{\text{Symmetry Terms}} }\\
    &\phantom{=} \clipbox{-2 0 395 0}{${\color{Orchid} \underbrace{ + \thickness^3 | \mathbb{\constitutive} | \left( \sum_{E_1 \in \mathcal{E}_{D_1}} \int_{E_1} \frac{\cPen{,1}^{\KLS}}{h^3_{E_1}} \delta u_{3,h} u_{3,h} \ d \physBoundary + \sum_{C \in \cornerSet{D}} \frac{\cPen{,2}^{\KLS}}{h^2_{C}} ( \delta u_{3,h} u_{3,h} ) \Big|_{C} \right. \hspace{40em} }}$} \\
    &\phantom{=} \hspace{2pt} \clipbox{10 0 -2 0}{$ {\color{Orchid} \underbrace{ \hspace{1em} \left. + \sum_{E_2 \in \mathcal{E}_{D_2}} \int_{E_2} \frac{\cPen{,3}^{\KLS}}{h_{E_2}} \theta_n(\delta {\bf u}_h) \theta_n({\bf u}_h) \ d \physBoundary \right) + \sum_{E_1 \in \mathcal{E}_{D_1}} \int_{E_1} \frac{\cPen{,4}^{\KLS} \thickness | \mathbb{\constitutive} |}{h_{E_1}} \delta \surfVec{u}_h \cdot \surfVec{u}_h \ d \physBoundary}_{\text{Penalty Terms}} }$}.
  \end{aligned}
\end{equation*}
}
\right.
$$
\end{mybox}

Now that we have constructed a Nitsche-based formulation for the Kirchhoff-Love shell that satisfies Assumptions~\ref{assumption1} and~\ref{assumption2} according to Lemmas~\ref{lemma:Greens_ID_KLS},~\ref{lemma:TI_KLS_gen}, and~\ref{lemma:CS_KLS}, we have the following theorem stating well-posedness and an error estimate for our formulation.

\begin{theorem}[Well-Posedness and Error Estimate for the Kirchhoff-Love Shell]
  Let $\cPen{,1}^{\KLS} = \CSconstant_1^2 \cTrace{,1}^{\KLS}$, $\cPen{,2}^{\KLS} = \CSconstant_2^2 \cTrace{,2}^{\KLS}$, $\cPen{,3}^{\KLS} = \CSconstant_3^2 \cTrace{,3}^{\KLS}$, and $\cPen{,4}^{\KLS} = \CSconstant_4^2 \max(\cTrace{,4}^{\KLS}, \cTrace{,5}^{\KLS})$, where $\CSconstant_1, \CSconstant_2, \CSconstant_3, \CSconstant_4 \in (1,\infty)$. Then there exists a unique discrete solution ${\bf u} \in \mathcal{V}^{\KLS}_h$ to the Nitsche-based formulation of the Kirchhoff-Love shell Problem $(N_h^{\KLS})$. Moreover, if the continuous solution ${\bf u} \in \mathcal{V}^{\KLS}$ to Problem $(V^{\KLS})$ satisfies ${\bf u} \in \tilde{\mathcal{V}}^{\KLS}$, then the discrete solution ${\bf u}_h$ satisfies the error estimate
  \begin{equation*}
\vvvertiii{{\bf u} - {\bf u}_h}_{\KLS} \leq \left( 1+ \frac{2}{1-\frac{1}{\CSconstant}} \right) \min_{{\bf v}_h \in \mathcal{V}^{\KLS}_h} \vvvertiii{{\bf u} - {\bf v}_h}_{\KLS},
\label{eq:KLS_Error}
\end{equation*}
  where $\CSconstant = \min(\CSconstant_1,\CSconstant_2,\CSconstant_3,\CSconstant_4)$ and $\vvvertiii{\cdot}_{\KLS}: \tilde{\mathcal{V}}^{\KLS} + \mathcal{V}^{\KLS}_h \rightarrow \mathbb{R}$ is the energy norm defined by
\begin{equation*}
\vvvertiii{\bf v}_{\KLS}^2 := a^{\KLS}({\bf v},{\bf v}) + \left\langle \mathcal{B}^{\KLS} {\bf v}, \eta^{\KLS} \mathcal{B}^{\KLS} {\bf v} \right\rangle + 2 \left\langle \left( \epsilon^{\KLS}\right)^{-1} \mathcal{T}^{\KLS} {\bf v}, \mathcal{T}^{\KLS} {\bf v} \right\rangle.
\end{equation*}
  \begin{proof}
  Note that the presented Nitsche-based formulation for the Kirchhoff-Love shell precisely fits into the abstract variational framework presented in Section~\ref{sec:Nitsche} with $\mathcal{V} = \mathcal{V}^{\KLS}$, $\mathcal{Q} = \mathcal{Q}^{\KLS}$, $a(\cdot,\cdot) = a^{\KLS}(\cdot,\cdot)$, $\linFunctional = \linFunctional^{\KLS}$, $\mathcal{T} = \mathcal{T}^{\KLS}$, $\tilde{\mathcal{V}} = \tilde{\mathcal{V}}^{\KLS}$, $\mathcal{L} = \mathcal{L}^{\KLS}$, $\mathcal{B} = \mathcal{B}^{\KLS}$, $\mathcal{V}_h = \mathcal{V}^{\KLS}_h$, $\epsilon = \epsilon^{\KLS}$, and $\eta = \eta^{\KLS}$.  Moreover, Assumption~\ref{assumption1} of the abstract variational framework is satisfied due to Lemma~\ref{lemma:Greens_ID_KLS}, and Assumption~\ref{assumption2} is satisfied due to Lemmas~\ref{lemma:TI_KLS_gen} and~\ref{lemma:CS_KLS}.  Then well-posedness is a direct result of the Lax-Milgram theorem and coercivity and continuity as established by Lemmas~\ref{lemma:abstract_coercivity} and~\ref{lemma:abstract_continuity}, and the error estimate follows directly from Theorem~\ref{theorem:error_estimate}.
  \end{proof}
\label{thm:KLS_Error}
\end{theorem}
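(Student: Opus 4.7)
The plan is to recognize that the Nitsche-based formulation $(N_h^{\KLS})$ is precisely an instantiation of the abstract Nitsche problem $(N_h)$ from Section~\ref{sec:Nitsche}, so that Theorem~\ref{theorem:error_estimate} delivers the result with essentially no additional work. The identifications are the obvious ones: $\mathcal{V} \leftrightarrow \mathcal{V}^{\KLS}$, $\mathcal{Q} \leftrightarrow \mathcal{Q}^{\KLS}$, $a(\cdot,\cdot) \leftrightarrow a^{\KLS}(\cdot,\cdot)$, $f \leftrightarrow \linFunctional^{\KLS}$, $\mathcal{T} \leftrightarrow \mathcal{T}^{\KLS}$, $\tilde{\mathcal{V}} \leftrightarrow \tilde{\mathcal{V}}^{\KLS}$, $\mathcal{L} \leftrightarrow \mathcal{L}^{\KLS}$, $\mathcal{B} \leftrightarrow \mathcal{B}^{\KLS}$, $\mathcal{V}_h \leftrightarrow \mathcal{V}^{\KLS}_h$, $\epsilon \leftrightarrow \epsilon^{\KLS}$, and $\eta \leftrightarrow \eta^{\KLS}$. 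Under these identifications, the consistency, symmetry, and penalty terms in $a^{\KLS}_h(\cdot,\cdot)$ and on the right-hand side of $(N_h^{\KLS})$ agree term-for-term with the abstract bilinear form $a_h(\cdot,\cdot)$ and the associated Nitsche linear functional; this is by design, since $\epsilon^{\KLS}$ and $\eta^{\KLS}$ were defined in the previous subsection precisely to make this correspondence hold.

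With the identifications made, the two standing assumptions of the abstract framework need to be checked. Assumption~\ref{assumption1}---the existence of a dense $\tilde{\mathcal{V}}^{\KLS} \subset \mathcal{V}^{\KLS}$ together with the maps $\mathcal{L}^{\KLS}$ and $\mathcal{B}^{\KLS}$ realizing the generalized Green's identity and the strong-form relation $\mathcal{L}^{\KLS} {\bf u} = \linFunctional^{\KLS}$ for smooth data---is exactly the content of Lemma~\ref{lemma:Greens_ID_KLS}. Assumption~\ref{assumption2} requires a positive, self-adjoint, densely-defined map $\eta^{\KLS}$ whose domain contains $\{\mathcal{B}^{\KLS}{\bf v} : {\bf v} \in \tilde{\mathcal{V}}^{\KLS} + \mathcal{V}^{\KLS}_h\}$ and which together with $\epsilon^{\KLS}$ satisfies generalized trace and Cauchy--Schwarz inequalities. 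The structural requirements on $\epsilon^{\KLS}$ and $\eta^{\KLS}$ are immediate from their explicit bilinear definitions, the extension of $\mathcal{B}^{\KLS}$ provided by~\eqref{eqn:bdy_integrals_KLS} ensures that all relevant boundary quantities are well defined on the enlarged space, and the two inequalities are supplied by Lemma~\ref{lemma:TI_KLS_gen} and Lemma~\ref{lemma:CS_KLS} respectively. Crucially, the penalty-constant specification in the hypothesis, $\cPen{,i}^{\KLS} = \CSconstant_i^2 \cTrace{,i}^{\KLS}$ (for $i=1,2,3$) and $\cPen{,4}^{\KLS} = \CSconstant_4^2 \max(\cTrace{,4}^{\KLS}, \cTrace{,5}^{\KLS})$ with each $\CSconstant_i \in (1,\infty)$, is exactly the hypothesis of Lemma~\ref{lemma:CS_KLS}, so $\CSconstant = \min(\CSconstant_1,\CSconstant_2,\CSconstant_3,\CSconstant_4) \in (1,\infty)$ as required by the abstract theory.

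With both assumptions verified, Lemmas~\ref{lemma:abstract_coercivity} and~\ref{lemma:abstract_continuity} yield coercivity and continuity of $a^{\KLS}_h(\cdot,\cdot)$ on $\tilde{\mathcal{V}}^{\KLS} + \mathcal{V}^{\KLS}_h$ with respect to the energy norm $\vvvertiii{\cdot}_{\KLS}$ defined in the theorem, and the Lax--Milgram theorem furnishes existence and uniqueness of the discrete solution ${\bf u}_h \in \mathcal{V}^{\KLS}_h$. The Céa-type error bound then follows by the same argument used in Theorem~\ref{theorem:error_estimate}: invoke consistency (Lemma~\ref{lemma:abstract_consistency}) to obtain Galerkin orthogonality of ${\bf u} - {\bf u}_h$, apply coercivity and continuity to bound $\vvvertiii{{\bf u}_h - {\bf v}_h}_{\KLS}$ by $\vvvertiii{{\bf u} - {\bf v}_h}_{\KLS}$ for arbitrary ${\bf v}_h \in \mathcal{V}^{\KLS}_h$, and finish with the triangle inequality and an infimum over $\mathcal{V}^{\KLS}_h$, yielding the quasi-optimal constant $1 + 2/(1 - 1/\CSconstant)$.

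There is no real obstacle to carrying out this plan, precisely because the bulk of the technical work has already been done: Lemma~\ref{lemma:Greens_ID_KLS} discharges Assumption~\ref{assumption1}, Lemmas~\ref{lemma:TI_KLS_gen} and~\ref{lemma:CS_KLS} discharge Assumption~\ref{assumption2}, and Theorem~\ref{theorem:error_estimate} encapsulates the final argument abstractly. The only point requiring any care is verifying that the extended domains of $\mathcal{B}^{\KLS}$, $\epsilon^{\KLS}$, and $\eta^{\KLS}$ are compatible on $\tilde{\mathcal{V}}^{\KLS} + \mathcal{V}^{\KLS}_h$, and this has already been addressed by the elementwise-sum reformulation~\eqref{eqn:bdy_integrals_KLS}. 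The proof itself will therefore consist of a short paragraph listing the identifications, citing the three lemmas to invoke Assumptions~\ref{assumption1}--\ref{assumption2}, and applying Theorem~\ref{theorem:error_estimate}.
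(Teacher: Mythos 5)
Your proposal is correct and follows essentially the same route as the paper's proof: identify the abstract framework objects with their Kirchhoff--Love shell counterparts, discharge Assumption~\ref{assumption1} via Lemma~\ref{lemma:Greens_ID_KLS} and Assumption~\ref{assumption2} via Lemmas~\ref{lemma:TI_KLS_gen} and~\ref{lemma:CS_KLS}, then apply Theorem~\ref{theorem:error_estimate}. The extra remarks you include (compatibility of the extended domains, the role of the penalty-constant hypothesis) are useful commentary but do not change the argument.
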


The above result indicates that our Nitsche-based formulation is quasi-optimal in the energy norm (in the sense that the error in the discrete solution is proportional to the best approximation error) when the continuous solution ${\bf u} \in \mathcal{V}^{\KLS}$ to Problem $(V^{\KLS})$ satisfies ${\bf u} \in \tilde{\mathcal{V}}^{\KLS}$.  However, the above result does not reveal the rates of convergence of the energy norm error, nor does it reveal the rates of convergence for other norms one may care about (for instance, the $L^2$-norm).

\begin{remark}
  The presented Nitsche-based formulation for the Kirchhoff-Love shell as well as the presented well-posedness and error estimate results are new to the best of our knowledge, though the presented formulation is quite similar to the formulations presented in \cite{guo2015weak,guo2015nitsche}.  However, the formulations diverge in two important ways.  First, the formulation presented in this paper includes corner forces, while the formulations presented in \cite{guo2015weak,guo2015nitsche} do not.  Second, following \cite[p.155]{Ciarlet2005} and \cite[p.156]{Koiter1973foundations}, the formulations presented in \cite{guo2015weak,guo2015nitsche} employ incorrect plane bending contributions to the ersatz force (see Remark \ref{rem:IncorrectErsatz}).  Consequently, the formulations presented in \cite{guo2015weak,guo2015nitsche} are actually variationally inconsistent and yield sub-optimal convergence rates when used with common boundary condition specifications.  We demonstrate this through numerical example later in Section \ref{sec:num_results}.
\end{remark}

\begin{remark}
  Note that, according to our analysis, a practitioner may select any $\CSconstant_1, \CSconstant_2, \CSconstant_3, \CSconstant_4 \in (1,\infty)$.  Generally speaking, Dirichlet boundary conditions are enforced more strongly for larger $\CSconstant_1, \CSconstant_2, \CSconstant_3, \CSconstant_4$ as opposed to smaller $\CSconstant_1, \CSconstant_2, \CSconstant_3, \CSconstant_4$.  However, the condition number of the linear system associated with Nitsche's method scales linearly with $\max\left( \CSconstant_1, \CSconstant_2, \CSconstant_3, \CSconstant_4 \right)$ \cite{juntunen2009nitsche} and, in certain circumstances, the discrete solution becomes over-constrained and boundary locking occurs as $\max\left( \CSconstant_1, \CSconstant_2, \CSconstant_3, \CSconstant_4 \right) \rightarrow \infty$, resulting in a loss of solution accuracy \cite{lew2008discontinuous}.  On the other hand, as $\min\left( \CSconstant_1, \CSconstant_2, \CSconstant_3, \CSconstant_4 \right) \rightarrow 1$, Lemma~\ref{lemma:abstract_coercivity} suggests that the linear system associated with Nitsche's method may lose definiteness, and Theorem~\ref{thm:KLS_Error} suggests that the energy norm error may blow up in the limit $\min\left( \CSconstant_1, \CSconstant_2, \CSconstant_3, \CSconstant_4 \right) \rightarrow 1$.  It is advisable then to choose moderate values for $\CSconstant_1, \CSconstant_2, \CSconstant_3, \CSconstant_4$.  Based on our collective experience, we recommend setting $\CSconstant_1 = \CSconstant_2 = \CSconstant_3 = \CSconstant_4 = 2$.
\end{remark}

Now that we have derived, presented, and proved well-posedness and an error estimate for our Nitsche-based formulation for the Kirchhoff-Love shell, we proceed with a discussion of the spline discretization to be employed for our numerics followed by a discussion of the associated \textit{a priori} error estimates for the Kirchhoff-Love shell.


\section{NURBS-Based Isogeometric Kirchhoff-Love Shell Discretizations}
\label{sec:apriori}

In this section, we provide a brief discussion of the discretization we employ for our numerical results, namely, \textbf{\emph{Non-Uniform Rational B-Splines}}, or NURBS. After presenting a brief introduction to NURBS, we provide \textit{a priori} error estimates for the Kirchhoff-Love shell under a NURBS discretization. It is worth noting that although we choose to employ NURBS for our numerical results, our theoretical exposition is not limited to such discretizations. In fact, since we have not discussed discretization until this point, the abstract Nitsche's framework discussed in Section~\ref{sec:Nitsche} is amenable to any discretization, as long as it provides sufficient smoothness.

\subsection{B-splines and NURBS}

The $i^{th}$ univariate B-spline basis function of degree $p$, herein denoted by $\hat{N}_{i,p}(\xi)$, is generated recursively over a \textbf{\emph{parametric domain}}, denoted herein by $\hat{\Omega}$. This parametric domain is defined by a \textbf{\emph{knot vector}}, that is, a non-decreasing set of real numbers called knots $\Xi = \{ \xi_1,\xi_2,\ldots,\xi_{n+p+1} \}$, where $n$ is the number of basis functions. The knot vector describes the support and continuity of the resulting basis functions. In NURBS-based isogeometric analysis, we typically employ an \textbf{\emph{open knot vector}}, where the first and last knots are repeated $p+1$ times, thus ensuring that the basis interpolates the geometry and solution field at the boundaries in one dimension and at the corners in higher dimensions.  We consider the maximally smooth case when each interior knot is unique, which yields a basis that is $C^{p-1}$-continuous. We also assume that the first and last knots in the knot vector are $0$ and $1$, respectively, without loss of generality.  The parametric domain is then $\hat{\Omega} = (0,1)$ in the one-dimensional setting.

The multivariate, tensor-product B-spline basis is obtained through a product of one-dimensional basis functions. In particular,
\begin{equation*}
\hat{N}_{{\bf i},{\bf p}}(\bm{\xi}) = \prod_{j=1}^{d_p} \hat{N}_{i_j,p_j}(\xi^j)
\end{equation*}
for multi-indices ${\bf i} = (i_1,i_2,...,i_{d_p})$ and ${\bf p} = (p_1,p_2,...,p_{d_p})$ representing basis function number and polynomial degree, respectively. Here, $d_p$ refers to the \textbf{\emph{parametric dimension}} while $d_s$ later refers to the \textbf{\emph{spatial dimension}}. Note that $d_s \ge d_p$. For the Kirchhoff-Love shell, $d_p = 2$ and $d_s = 3$.

A NURBS function is a projective transformation of a B-spline function in one higher spatial dimension. Given a set of B-spline basis functions and \textbf{\emph{NURBS weights}}, $w_{\bf i} \in \R^+$, we define the corresponding set of NURBS basis functions via
\begin{equation*}
\hat{R}_{{\bf i},{\bf p}}(\bm{\xi}) = \frac{w_{\bf i} \hat{N}_{{\bf i},{\bf p}}(\bm{\xi})}{w(\bm{\xi})}, \hspace{20pt} \text{where} \hspace{20pt} w(\bm{\xi}) = \sum_{\bf i} w_{\bf i} \hat{N}_{{\bf i},{\bf p}}(\bm{\xi}).
\end{equation*}
Here we have adopted the multi-index notation used for the multivariate B-splines in this definition.

We construct the \textbf{\emph{control mesh}} in $d_s$-dimensions that, together with the complete set of NURBS basis functions, define a $d_s$-dimensional geometry $\physDomain \subset \R^{d_s}$. This serves as our \textbf{\emph{physical domain}}. More specifically, given a set of NURBS control points ${\bf P}_{\bf i}$ and weights $w_{\bf i}$, the parameterization of the physical domain $\midsurfMap \colon \parDomain \rightarrow \physDomain$ is given by

\begin{equation*}
\midsurfMap(\bm{\xi}) = \sum_{\bf i} \fullTens{P}_{\bf i} \hat{R}_{{\bf i}}(\bm{\xi})
\end{equation*}

\noindent for all $\bm{\xi} \in \parDomain$, where $\parDomain = (0,1)^{d_p}$. Note that we dropped the subscript ${\bf p}$ for notational ease, as we do henceforth.

Since the vector-valued PDE considered herein is cast over a spatial variable, we require an appropriate space of basis functions defined in the physical space. To this end, we leverage the isoparametric concept through our geometric parameterization. Namely, we use the \textbf{\emph{push-forward}} operator describing how the physical variable $\midsurfMap$ is related to the parametric variable $\bm{\xi}$ in order to define NURBS basis functions in physical space as

\begin{equation*}
  R_{{\bf i}}({\bf{x}}(\bm{\xi})) = \hat{R}_{{\bf i}}(\bm{\xi}).
\end{equation*}

\noindent We then describe test and trial functions in terms of NURBS basis functions in physical space.  For the Kirchhoff-Love shell, we set

\begin{equation*}
\mathcal{V}^{\KLS}_h := \left\{ {\bf{v}} : \Omega \rightarrow \mathbb{R}^3 : {\bf{v}}({\bf{x}}) = \sum_{\bf i} {\bf{v}}_{\bf i} R_{{\bf i}}({\bf{x}}) \right\},
\end{equation*}

\noindent where the coefficients ${\bf{v}}_{\bf i} \in \mathbb{R}^3$ are commonly referred to as \textbf{\emph{control variables}}. For a comprehensive discussion of NURBS, their properties, and their implementation see \cite{Piegl2012}, and for a deeper discussion of NURBS-based isogeometric analysis and various applications, see \cite{Hughes2005,Cottrell2009}.  It should be noted that complex geometries of arbitrary topology may be represented using so-called multi-patch NURBS mappings \cite[Chapter~2]{Cottrell2009} or alternative parameterization techniques such as subdivision surfaces \cite{Cirak2000} and T-splines \cite{Bazilevs2010}.

Now that the discretization we employ has been presented, we take note of a subtle but important detail.  As discussed in Remarks \ref{remark:eig_KLS}, suitable trace constants for an isogeometric shell discretization may be attained by solving generalized eigenproblems.  In the asymptotic range, it is well known that these trace constants are independent of the mesh size $h$ for quasi-uniform isogeometric discretizations \cite{evans2013explicit}.  In practice, it is usually sufficient to compute trace constants for a coarse isogeometric discretization and then employ them for finer isogeometric discretizations. However, this property does not hold in general for all discretizations since it relies on the existence of discrete trace inequalities with mesh-independent constants.

\subsection{Sobolev Spaces on Manifolds}
To establish \textit{a priori} error estimates for NURBS-based Kirchhoff-Love shell discretizations, we first need to extend the concept of a Sobolev space from the Euclidean setting to the more general manifold setting.  To this end, let $\physDomain \subset \mathbb{R}^3$ be a smooth two-dimensional immersed manifold with Lipschitz-continuous boundary $\Gamma = \partial \physDomain$, and assume that $\physDomain$ is represented in terms of a smooth bijective mapping $\undef{\midsurfMap}: \parDomain \rightarrow \physDomain$, where $\parDomain \subset \mathbb{R}^2$ is an open domain with Lipschitz-continuous boundary $\hat{\Gamma} = \partial \parDomain$.  In this subsection, we use the word smooth to mean infinitely differentiable.  We can define a number of differential geometric objects on the manifold as discussed in \ref{sec:Appendix_Diff_Geo}, \ref{sec:Appendix_Cont_Mech}, and \ref{sec:Appendix_Components} and, in particular, we can define the surface gradient of a smooth scalar-valued function $v: \physDomain \rightarrow \mathbb{R}$ as
\begin{equation*}
\surfVec{\nabla} v = \frac{\partial v}{\partial \xi^{\alpha}} {\bf a}^{\alpha},
\end{equation*}
where $\xi^{\alpha}$ is the $\alpha^{\text{th}}$ in-plane convective coordinate and ${\bf a}^{\alpha}$ is the $\alpha^{\text{th}}$ contravariant tangent vector.  Similarly, we can define the surface gradient of a smooth order-$r$ tensor-valued function ${\bf A} = A_{m_1 \ldots m_r} {\bf a}^{m_1} \otimes \ldots \otimes {\bf a}^{m_r}$ as
\begin{equation*}
\surfVec{\nabla} {\bf A} =  \frac{\partial {\bf A}}{\partial \xi^{\alpha}} \otimes {\bf a}^{\alpha}.
\end{equation*}
Thus, the surface gradient of a smooth order-$r$ tensor-valued function is a smooth order-$(r+1)$ tensor-valued function.  Higher-order surface derivatives are defined recursively (e.g., $\surfVec{\nabla}^2 {\bf A} = \surfVec{\nabla} \left( \surfVec{\nabla} {\bf A} \right)$), and it is easily seen that the $k^{\text{th}}$ surface gradient of a smooth order-$r$ tensor-valued function is a smooth order-$(r+k)$ tensor-valued function.  Consequently, we may write the $k^{\text{th}}$ surface gradient of a smooth order-$r$ tensor-valued function as
\begin{equation*}
\surfVec{\nabla}^k {\bf A} = \left( \surfVec{\nabla}^k {\bf A} \right)_{m_1 \ldots m_{r+k}} {\bf a}^{m_1} \otimes \ldots \otimes {\bf a}^{m_{r+k}},
\end{equation*}
and we define the magnitude of the $k^{\text{th}}$ surface gradient as
\begin{equation*}
| \surfVec{\nabla}^k {\bf A} |^2 = a^{m_1 n_1} \ldots a^{m_k n_{r+k}} \left( \surfVec{\nabla}^k {\bf A} \right)_{m_1 \ldots m_{r+k}} \left( \surfVec{\nabla}^k {\bf A} \right)_{n_1 \ldots n_{r+k}},
\end{equation*}
where $a^{ij} = {\bf a}^i \cdot {\bf a}^j$ are the contravariant metric coefficients. By convention, we define $\surfVec{\nabla}^0 {\bf A} = {\bf A}$. It should be noted that we can define the surface divergence of a smooth order-$r$ tensor-valued function similarly, namely,
\begin{equation*}
\surfVec{\nabla} \cdot {\bf A} = \frac{\partial {\bf A}}{\partial \xi^{\alpha}} \cdot {\bf a}^{\alpha},
\end{equation*}
and the surface divergence of a smooth order-$r$ tensor-valued function is a smooth order-$(r-1)$ tensor-valued function.  The above definitions of surface gradient and surface divergence generalize the definitions used in Section \ref{sec:KL_Shell}.

Now, let $C^{\infty}(\physDomain)$ denote the space of smooth scalar-valued functions over the manifold. Also, for $s$ a non-negative integer, let
\begin{equation*}
C^{\infty}_s(\physDomain) := \left\{ v \in C^{\infty}(\physDomain) : \left\| v \right\|^2_{H^s(\physDomain)} < \infty \right\},
\end{equation*}
where
\begin{equation*}
\left\| v \right\|^2_{H^s(\physDomain)} := \sum_{k = 0}^{s} \ell^{2k-2} \int_{\physDomain} | \surfVec{\nabla}^k v |^2 \ d \physDomain
\end{equation*}
and $\ell = \text{diam}(\physDomain)$. We then define the Sobolev space $H^s(\physDomain)$ of scalar-valued functions as the completion of $C^{\infty}_s(\physDomain)$ with respect to $\left\| \cdot \right\|_{H^s(\physDomain)}$, and the Sobolev spaces for tensor-valued functions analogously.  The Sobolev space $H^0(\physDomain)$ coincides with $L^2(\physDomain)$, the space of square-integrable scalar-valued functions equipped with the norm
\begin{equation*}
\left\| v \right\|^2_{L^2(\physDomain)} := \ell^{-2} \int_{\physDomain} v^2 d\physDomain.
\end{equation*}
Note that all of the above Sobolev norms have the same units, simplifying the following analysis.  The Sobolev spaces presented above also coincide with those employed in Section \ref{sec:KL_Shell}.

Let $L^2(\Gamma)$ denote the space of square-integrable functions over the boundary of the manifold, equipped with the norm
\begin{equation*}
    \left\| v \right\|^2_{L^2(\Gamma)} := \ell^{-1} \int_{\Gamma} v^2 d\Gamma.
\end{equation*}
As in the Euclidean setting (see, e.g., \cite{Adams2003}), we define a linear and bounded trace operator $\text{Tr} : H^1(\physDomain) \rightarrow L^2(\Gamma)$ such that $\text{Tr}(v) = v|_{\Gamma}$ for smooth scalar-valued functions $v \in H^1(\physDomain)$.  For non-negative integers $s$, define $H^{s+1/2}(\Gamma) = \text{Tr}(H^{s+1}(\physDomain))$ and
\begin{equation*}
\left\| w \right\|^2_{H^{s+1/2}(\Gamma)} := \inf_{\substack{v \in H^{s+1}(\physDomain) \\ \text{Tr}(v) = w}} \left\| v \right\|^2_{H^{s+1}(\physDomain)}.
\end{equation*}
These fractional Sobolev spaces on the manifold boundary coincide with those employed in Section \ref{sec:KL_Shell}.

As a final remark, note that we can also define Sobolev spaces on non-smooth manifolds.  In particular, if the geometric mapping $\undef{\midsurfMap}: \parDomain \rightarrow \physDomain$ is a $C^{s-1}$-continuous NURBS mapping, then we can define the space $H^s(\Omega)$ on the manifold similarly to that presented here.  We can also define Sobolev spaces on manifolds that cannot be described in terms of a single parametric mapping.  This requires the use of charts, atlases, and transition maps.  For more information, see \cite{Schick2001}.

\subsection{Interpolation Estimates for NURBS-Based Kirchhoff-Love Shell Discretizations}

We are now in a position to state interpolation estimates for NURBS-based Kirchhoff-Love shell discretizations.  Following the work of \cite{Bazilevs2006}, we can construct a quasi-interpolation operator $\mathcal{I}^{\KLS}_h: \left(L^2(\Omega)\right)^3 \rightarrow \mathcal{V}^{\KLS}_h$ such that, for each set of integers $0 \leq k < l \leq p + 1$ and for all ${\bf v} \in \left(H^l(\Omega)\right)^3$,
\begin{equation*}
\left\| {\bf v} - \mathcal{I}^{\KLS}_h {\bf v} \right\|_{\left(H^k(\Omega)\right)^3} \leq C_{\text{interp}} \left( \frac{h}{\ell} \right)^{l-k} \left\| {\bf v} \right\|_{\left(H^l(\Omega)\right)^3},
\end{equation*}
where $h = \max_{K \in \mathcal{K}} h_K$ is the mesh size and $C_{\text{interp}}$ is a dimensionless constant independent of the mesh-to-domain-size ratio $h/\ell$ but dependent on the integers $k$ and $l$, the polynomial degree $p$, the normalized geometric mapping $\left({\bf x}(\bm{\xi}) - {\bf x}(\bm{0})\right)/\ell$, and the parametric mesh regularity.  The quasi-interpolation operator is defined by first constructing a locally $L^2$-stable quasi-interpolation operator $\hat{\mathcal{I}}^{\KLS}_h$ over the parametric domain using locally supported dual basis functions \cite[Chapter~12]{Schumaker2007} and then setting $\mathcal{I}^{\KLS}_h = \hat{\mathcal{I}}^{\KLS}_h \circ {\bf x}^{-1}$.  Similar interpolation estimates hold over individual elements of the computational mesh, as in \cite[Theorem~3.1]{Bazilevs2006}.

\subsection{\textit{A Priori} Error Estimate in the Energy Norm for NURBS-Based Kirchhoff-Love Shell Discretizations}

Armed with interpolation estimates, we are able to prove the following result for NURBS-based Kirchhoff-Love shell discretizations.

\begin{theorem}[\textit{A Priori} Error Estimate in the Energy Norm for the Kirchhoff-Love Shell]
If $p \geq 2$, then for any ${\bf u} \in \left(H^{p+1}(\Omega)\right)^3$, we have the estimate
$$\vvvertiii{{\bf u} - {\bf u}_h}^2_{\KLS} \leq C_{\text{bound}} | \mathbb{C} | \ell \left( \left( \frac{\zeta}{\ell} \right) \left( \frac{h}{\ell} \right)^{2p} + \left( \frac{\zeta}{\ell} \right)^3 \left( \frac{h}{\ell} \right)^{2p-2} \right) \| {\bf u} \|^2_{\left(H^{p+1}(\Omega)\right)^3},$$
where $C_{\text{bound}}$ is a dimensionless constant independent of the mesh-to-domain-size ratio $h/\ell$, the thickness-to-domain-size ratio $\zeta/\ell$, and the normalized elasticity tensor $\mathbb{C}/|\mathbb{C}|$, but dependent on polynomial degree $p$, the normalized geometric mapping $\left({\bf x}(\bm{\xi}) - {\bf x}(\bm{0})\right)/\ell$, the trace constants $C^S_{\textup{tr},1}$, $C^S_{\textup{tr},2}$, $C^S_{\textup{tr},3}$, $C^S_{\textup{tr},4}$, and $C^S_{\textup{tr},5}$, the penalty constants $C^S_{\textup{pen},1}$, $C^S_{\textup{pen},2}$, $C^S_{\textup{pen},3}$, and $C^S_{\textup{pen},4}$, and the parametric mesh regularity.
\end{theorem}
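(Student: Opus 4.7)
The plan is to combine the quasi-optimality result of Theorem~\ref{thm:KLS_Error} with element-local trace inequalities and the NURBS quasi-interpolation estimate stated in this section. Starting from
$$\vvvertiii{{\bf u} - {\bf u}_h}_{\KLS} \le \left(1 + \frac{2}{1 - 1/\CSconstant}\right) \inf_{{\bf v}_h \in \mathcal{V}^{\KLS}_h} \vvvertiii{{\bf u} - {\bf v}_h}_{\KLS},$$
I would pick ${\bf v}_h = \mathcal{I}^{\KLS}_h {\bf u}$ to reduce the problem to bounding the energy norm of $\boldsymbol{\eta}_h := {\bf u} - \mathcal{I}^{\KLS}_h {\bf u}$. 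Since $\vvvertiii{\boldsymbol{\eta}_h}^2_{\KLS}$ decomposes into the bilinear form $a^{\KLS}(\boldsymbol{\eta}_h, \boldsymbol{\eta}_h)$ plus two boundary contributions, each piece may be estimated separately.

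For the strain-energy contribution, Cauchy--Schwarz on $a^{\KLS}(\cdot, \cdot)$ combined with the elementary pointwise bounds $|\surfTens{\memStrain}({\bf v})| \lesssim |\surfVec{\nabla}{\bf v}|$ and $|\surfTens{\bendStrain}({\bf v})| \lesssim |\surfVec{\nabla}^2 {\bf v}|$ (with constants depending on the normalized geometric mapping) yields
$$a^{\KLS}(\boldsymbol{\eta}_h, \boldsymbol{\eta}_h) \lesssim \thickness |\mathbb{C}| \|\boldsymbol{\eta}_h\|^2_{(H^1(\physDomain))^3} + \thickness^3 |\mathbb{C}| \ell^{-2} \|\boldsymbol{\eta}_h\|^2_{(H^2(\physDomain))^3}.$$
Invoking the interpolation estimate with $l = p+1$ and $k = 1, 2$ converts the right-hand side into exactly $|\mathbb{C}|\ell\bigl[(\thickness/\ell)(h/\ell)^{2p} + (\thickness/\ell)^3(h/\ell)^{2p-2}\bigr]\|{\bf u}\|^2_{(H^{p+1}(\physDomain))^3}$, which matches the target bound for this piece.

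For the nine boundary contributions making up $\langle\mathcal{B}^{\KLS}\boldsymbol{\eta}_h, \eta^{\KLS}\mathcal{B}^{\KLS}\boldsymbol{\eta}_h\rangle + 2\langle(\epsilon^{\KLS})^{-1}\mathcal{T}^{\KLS}\boldsymbol{\eta}_h, \mathcal{T}^{\KLS}\boldsymbol{\eta}_h\rangle$, I would pass from boundary to bulk element-by-element using the standard trace inequality $\|w\|^2_{L^2(E)} \lesssim h_K^{-1}\|w\|^2_{L^2(K)} + h_K \|\surfVec{\nabla} w\|^2_{L^2(K)}$, together with its point-evaluation analogue from $H^2(K) \hookrightarrow C^0(\overline{K})$ in two dimensions for the corner terms. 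Taking $w$ to be each relevant strain, stress, moment, or displacement component, and then applying element-local interpolation estimates of the form $\|\boldsymbol{\eta}_h\|_{(H^k(K))^3} \lesssim (h/\ell)^{p+1-k}\|{\bf u}\|_{(H^{p+1}(\tilde K))^3}$ on a local patch $\tilde K \supseteq K$, every individual contribution collapses into either the membrane scaling $(\thickness/\ell)(h/\ell)^{2p}$ (for the in-plane membrane-ersatz-traction and the in-plane displacement penalty) or the bending scaling $(\thickness/\ell)^3(h/\ell)^{2p-2}$ (for the seven remaining terms involving bending stresses, shears, moments, and rotations), each multiplied by $|\mathbb{C}|\ell\|{\bf u}\|^2_{(H^{p+1}(\physDomain))^3}$. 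Summing these bounds together with the strain-energy bound and absorbing the polynomial degree, normalized geometric mapping, trace constants $\cTrace{,i}^{\KLS}$, penalty constants $\cPen{,i}^{\KLS}$, and parametric mesh regularity into a single $C_{\text{bound}}$ then yields the claimed estimate.

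The main obstacle is the highest-order boundary term $\ersatz_3(\boldsymbol{\eta}_h)$, which formally involves third-order surface derivatives and is therefore not directly controlled by $\|{\bf u}\|_{(H^{p+1}(\physDomain))^3}$ when $p = 2$. The resolution is to apply the trace inequality element-by-element and exploit the polynomial reproduction of $\mathcal{I}^{\KLS}_h$ on each element together with a standard inverse estimate on its discrete portion, so that the bound reduces to the element-local $H^2$ interpolation error of $\boldsymbol{\eta}_h$. Carrying this out consistently across all nine boundary functionals while keeping the $\thickness/\ell$ and $h/\ell$ powers aligned with the membrane/bending split is the most tedious step of the proof.
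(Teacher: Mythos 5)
Your proposal follows essentially the same route as the paper's proof: invoke the quasi-optimality bound of Theorem~\ref{thm:KLS_Error}, choose $\mathbf{v}_h = \mathcal{I}^{\KLS}_h \mathbf{u}$, split the energy norm into the two bulk strain-energy integrals plus the nine boundary/corner terms, bound the bulk pieces by interpolation in $H^1$ and $H^2$, and bound the boundary pieces by passing to the bulk via element-local trace inequalities combined with local interpolation estimates. The one place you go further than the paper is the discussion of the third-derivative term $\ersatz_3(\boldsymbol{\eta}_h)$ at $p=2$: the paper dispatches all nine boundary terms with a pointer to another reference, whereas you correctly flag that a naive application of the trace inequality would require a fourth derivative of $\mathbf{u}$ and sketch the standard repair (split off a local polynomial approximant and use a discrete trace/inverse estimate on the discrete remainder so that only derivatives of order $\le p+1$ of $\mathbf{u}$ are needed); this is the right idea and is exactly what makes the cited reference's local-estimate argument work.
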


\begin{proof}
From Theorem~\ref{thm:KLS_Error}, we know that
$$\vvvertiii{{\bf u} - {\bf u}_h}_{\KLS} \leq \left( 1+ \frac{2}{1-\frac{1}{\CSconstant}} \right) \min_{{\bf v}_h \in \mathcal{V}^{\KLS}_h} \vvvertiii{{\bf u} - {\bf v}_h}_{\KLS},$$
so it holds that
\begin{equation}
\vvvertiii{{\bf u} - {\bf u}_h}^2_{\KLS} \leq \left( 1+ \frac{2}{1-\frac{1}{\CSconstant}} \right)^2 \vvvertiii{{\bf u} - \mathcal{I}^{\KLS}_h {\bf u}}^2_{\KLS}. \label{eq:Theorem_7_First}
\end{equation}
We now expand as follows:
\begin{align*}
\vvvertiii{{\bf u} - \mathcal{I}^{\KLS}_h {\bf u}}^2_{\KLS} &= \int_{\physDomain} \surfTens{\memStress}({\bf u} - \mathcal{I}^{\KLS}_h {\bf u}) \colon \surfTens{\memStrain}({\bf u} - \mathcal{I}^{\KLS}_h {\bf u}) \ d \physDomain + \int_{\physDomain} \surfTens{\bendStress}({\bf u} - \mathcal{I}^{\KLS}_h {\bf u}) \colon \surfTens{\bendStrain}({\bf u} - \mathcal{I}^{\KLS}_h {\bf u}) \ d \physDomain\\
    &\phantom{=} + \sum_{E_1 \in \mathcal{E}_{D_1}} \int_{E_1} \frac{h_{E_1}^3}{\cTrace{,1}^{\KLS} | \mathbb{C} | \thickness^3} \left|T_3({\bf u} - \mathcal{I}^{\KLS}_h {\bf u})\right|^2 + \sum_{C \in \cornerSet{D}} \frac{h_C^2}{\cTrace{,2}^{\KLS} | \mathbb{C} | \thickness^3} \llbracket \moment_{nt}({\bf u} - \mathcal{I}^{\KLS}_h {\bf u}) \rrbracket^2 \Big|_C\\
    &\phantom{=} + \sum_{E_2 \in \mathcal{E}_{D_2}} \int_{E_2} \frac{h_{E_2}}{\cTrace{,3}^{\KLS} | \mathbb{C} | \thickness^3} \left|\moment_{nn}({\bf u} - \mathcal{I}^{\KLS}_h {\bf u})\right|^2 \ d \physBoundary + \sum_{E_1 \in \mathcal{E}_{D_1}} \int_{E_1} \frac{h_{E_1}}{\cTrace{,4}^{\KLS} | \mathbb{C} | \thickness} \left| \surfVec{T}^{(\memStress)}({\bf u} - \mathcal{I}^{\KLS}_h {\bf u}) \right|^2 \ d \physBoundary\\
    &\phantom{=} + \sum_{E_1 \in \mathcal{E}_{D_1}} \int_{E_1} \frac{h_{E_1}}{\cTrace{,5}^{\KLS} | \mathbb{C} | \thickness^3} \left| \surfVec{T}^{(\bendStress)}({\bf u} - \mathcal{I}^{\KLS}_h {\bf u}) \right|^2 \ d \physBoundary + \sum_{E_1 \in \mathcal{E}_{D_1}} \int_{E_1} \frac{\cPen{,1}^{\KLS} |\mathbb{C}| \thickness^3}{h^3_{E_1}} \left| u_3 - \mathcal{I}^{\KLS}_h u_3 \right|^2 \ d \physBoundary\\
    &\phantom{=} + \sum_{C \in \cornerSet{D}} \frac{\cPen{,2}^{\KLS} |\mathbb{C}| \thickness^3}{h^2_{C}} \left| u_3 - \mathcal{I}^{\KLS}_h u_3 \right|^2 \Big|_{C} + \sum_{E_2 \in \mathcal{E}_{D_2}} \int_{E_2} \frac{\cPen{,3}^{\KLS} |\mathbb{C}| \thickness^3}{h_{E_2}} \left| \theta_n({\bf u} - \mathcal{I}^{\KLS}_h {\bf u}) \right|^2 \ d \physBoundary\\
    &\phantom{=} + \sum_{E_1 \in \mathcal{E}_{D_1}} \int_{E_1} \frac{\cPen{,4}^{\KLS} | \mathbb{C} | \thickness}{h_{E_1}} \left| \surfVec{u} - \mathcal{I}^{\KLS}_h \surfVec{u} \right|^2 \ d \physBoundary,
\end{align*}
where we used the abuse of notation $\mathcal{I}^{\KLS}_h u_3 = \mathcal{I}^{\KLS}_h {\bf u} \cdot {\bf a}_3$ and $\mathcal{I}^{\KLS}_h \surfVec{u} = \left( \mathcal{I}^{\KLS}_h {\bf u} \cdot {\bf a}_{\alpha} \right) {\bf a}^{\alpha}$.  A quick calculation reveals that
\begin{equation*}
\int_{\physDomain} \surfTens{\memStress}({\bf u} - \mathcal{I}^{\KLS}_h {\bf u}) \colon \surfTens{\memStrain}({\bf u} - \mathcal{I}^{\KLS}_h {\bf u}) \ d \physDomain \leq | \mathbb{C} | \ell \left( \frac{\thickness}{\ell} \right) \| {\bf u} - \mathcal{I}^{\KLS}_h {\bf u} \|^2_{\left(H^1(\Omega)\right)^3},
\end{equation*}
so, by our interpolation estimates,
\begin{equation}
\int_{\physDomain} \surfTens{\memStress}({\bf u} - \mathcal{I}^{\KLS}_h {\bf u}) \colon \surfTens{\memStrain}({\bf u} - \mathcal{I}^{\KLS}_h {\bf u}) \ d \physDomain \leq C_{1} | \mathbb{C} | \ell \left( \frac{\thickness}{\ell} \right) \left( \frac{h}{\ell} \right)^{2p} \| {\bf u} \|^2_{\left(H^{p+1}(\Omega)\right)^3},
\end{equation}
where $C_{1}$ is a dimensionless constant only dependent on polynomial degree, the normalized geometric mapping, and the parametric mesh regularity.  A similar calculation reveals that
\begin{equation*}
\int_{\physDomain} \surfTens{\bendStress}({\bf u} - \mathcal{I}^{\KLS}_h {\bf u}) \colon \surfTens{\bendStrain}({\bf u} - \mathcal{I}^{\KLS}_h {\bf u}) \ d \physDomain \leq | \mathbb{C} | \ell \left( \frac{\thickness}{\ell} \right)^3 \| {\bf u} - \mathcal{I}^{\KLS}_h {\bf u} \|^2_{\left(H^2(\Omega)\right)^3},
\end{equation*}
so, by our interpolation estimates,
\begin{equation}
\int_{\physDomain} \surfTens{\bendStress}({\bf u} - \mathcal{I}^{\KLS}_h {\bf u}) \colon \surfTens{\bendStrain}({\bf u} - \mathcal{I}^{\KLS}_h {\bf u}) \ d \physDomain \leq C_{2} | \mathbb{C} | \ell \left( \frac{\thickness}{\ell} \right)^3 \left( \frac{h}{\ell} \right)^{2p-2} \| {\bf u} \|^2_{\left(H^{p+1}(\Omega)\right)^3},
\end{equation}
where $C_{2}$ also is a dimensionless constant only dependent on polynomial degree, the normalized geometric mapping, and the parametric mesh regularity.  The other terms require more finesse and patience to bound.  However, by appealing to the continuous trace equality and local versions of our interpolation estimates (see, e.g., the proof of \cite[Theorem~6.2]{Evans2013DivFree}), it can be shown that
\begin{equation}
\sum_{E_1 \in \mathcal{E}_{D_1}} \int_{E_1} \frac{h_{E_1}^3}{\cTrace{,1}^{\KLS} | \mathbb{C} | \thickness^3} \left|T_3({\bf u} - \mathcal{I}^{\KLS}_h {\bf u})\right|^2 \leq C_{{\bf u}} C_{3} \left( \frac{\thickness}{\ell} \right)^3 \left( \frac{h}{\ell} \right)^{2p-2}
\end{equation}
\begin{equation}
\sum_{C \in \cornerSet{D}} \frac{h_C^2}{\cTrace{,2}^{\KLS} | \mathbb{C} | \thickness^3} \llbracket \moment_{nt}({\bf u} - \mathcal{I}^{\KLS}_h {\bf u}) \rrbracket^2 \Big|_C \leq C_{{\bf u}} C_{4} \left( \frac{\thickness}{\ell} \right)^3 \left( \frac{h}{\ell} \right)^{2p-2}
\end{equation}
\begin{equation}
\sum_{E_2 \in \mathcal{E}_{D_2}} \int_{E_2} \frac{h_{E_2}}{\cTrace{,3}^{\KLS} | \mathbb{C} | \thickness^3} \left|\moment_{nn}({\bf u} - \mathcal{I}^{\KLS}_h {\bf u})\right|^2 \ d \physBoundary \leq C_{{\bf u}} C_{5} \left( \frac{\thickness}{\ell} \right)^3 \left( \frac{h}{\ell} \right)^{2p-2}
\end{equation}
\begin{equation}
\sum_{E_1 \in \mathcal{E}_{D_1}} \int_{E_1} \frac{h_{E_1}}{\cTrace{,4}^{\KLS} | \mathbb{C} | \thickness} \left| \surfVec{T}^{(\memStress)}({\bf u} - \mathcal{I}^{\KLS}_h {\bf u}) \right|^2 \ d \physBoundary \leq C_{{\bf u}} C_{6} \left( \frac{\thickness}{\ell} \right) \left( \frac{h}{\ell} \right)^{2p}
\end{equation}
\begin{equation}
\sum_{E_1 \in \mathcal{E}_{D_1}} \int_{E_1} \frac{h_{E_1}}{\cTrace{,5}^{\KLS} | \mathbb{C} | \thickness^3} \left| \surfVec{T}^{(\bendStress)}({\bf u} - \mathcal{I}^{\KLS}_h {\bf u}) \right|^2 \ d \physBoundary \leq C_{{\bf u}} C_{7} \left( \frac{\thickness}{\ell} \right)^3 \left( \frac{h}{\ell} \right)^{2p-2}
\end{equation}
\begin{equation}
\sum_{E_1 \in \mathcal{E}_{D_1}} \int_{E_1} \frac{\cPen{,1}^{\KLS} |\mathbb{C}| \thickness^3}{h^3_{E_1}} \left| u_3 - \mathcal{I}^{\KLS}_h u_3 \right|^2 \ d \physBoundary  \leq C_{{\bf u}} C_{8} \left( \frac{\thickness}{\ell} \right)^3 \left( \frac{h}{\ell} \right)^{2p-2}
\end{equation}
\begin{equation}
\sum_{C \in \cornerSet{D}} \frac{\cPen{,2}^{\KLS} |\mathbb{C}| \thickness^3}{h^2_{C}} \left| u_3 - \mathcal{I}^{\KLS}_h u_3 \right|^2 \Big|_{C} \leq C_{{\bf u}} C_{9} \left( \frac{\thickness}{\ell} \right)^3 \left( \frac{h}{\ell} \right)^{2p-2}
\end{equation}
\begin{equation}
\sum_{E_2 \in \mathcal{E}_{D_2}} \int_{E_2} \frac{\cPen{,3}^{\KLS} |\mathbb{C}| \thickness^3}{h_{E_2}} \left| \theta_n({\bf u} - \mathcal{I}^{\KLS}_h {\bf u}) \right|^2 \ d \physBoundary  \leq C_{{\bf u}} C_{10} \left( \frac{\thickness}{\ell} \right)^3 \left( \frac{h}{\ell} \right)^{2p-2}
\end{equation}
\begin{equation}
\sum_{E_1 \in \mathcal{E}_{D_1}} \int_{E_1} \frac{\cPen{,4}^{\KLS} | \mathbb{C} | \thickness}{h_{E_1}} \left| \surfVec{u} - \mathcal{I}^{\KLS}_h \surfVec{u} \right|^2 \ d \physBoundary \leq C_{{\bf u}} C_{11} \left( \frac{\thickness}{\ell} \right) \left( \frac{h}{\ell} \right)^{2p},
\label{eq:Theorem_7_Last}
\end{equation}
where $C_{\bf u} = | \mathbb{C} | \ell \| {\bf u} \|^2_{\left(H^{p+1}(\Omega)\right)^3}$ and $C_{3}$ through $C_{11}$ are dimensionless constants only dependent on polynomial degree, the normalized geometric mapping, the parametric mesh regularity, the trace constants $C^S_{\text{tr},1}$, $C^S_{\text{tr},2}$, $C^S_{\text{tr},3}$, $C^S_{\text{tr},4}$, and $C^S_{\text{tr},5}$, and the penalty constants $C^S_{\text{pen},1}$, $C^S_{\text{pen},2}$, $C^S_{\text{pen},3}$, and $C^S_{\text{pen},4}$.  Collecting \eqref{eq:Theorem_7_First}-\eqref{eq:Theorem_7_Last}, we obtain
\begin{equation*}
  \begin{aligned}
    \vvvertiii{{\bf u} - {\bf u}_h}^2_{\KLS} &\leq C_{\bf u} \left( 1 + \frac{2}{1-\frac{1}{\CSconstant}} \right)^2 \Bigg( C_{1} + C_{6} + C_{11} \Bigg) \left( \frac{\zeta}{\ell} \right) \left( \frac{h}{\ell} \right)^{2p} \\
    &\phantom{=} + C_{\bf u} \left( 1+ \frac{2}{1-\frac{1}{\CSconstant}} \right)^2 \Bigg( C_{2} + C_{3} + C_{4} + C_{5} + C_{7} + C_{8} + C_{9} + C_{10} \Bigg) \left( \frac{\zeta}{\ell} \right)^3 \left( \frac{h}{\ell} \right)^{2p-2}.
  \end{aligned}
\end{equation*}
Since the coercivity constant $\gamma$ depends only on the trace constants $C^S_{\text{tr},1}$, $C^S_{\text{tr},2}$, $C^S_{\text{tr},3}$, $C^S_{\text{tr},4}$, and $C^S_{\text{tr},5}$ and the penalty constants $C^S_{\text{pen},1}$, $C^S_{\text{pen},2}$, $C^S_{\text{pen},3}$, and $C^S_{\text{pen},4}$, the desired result follows with
\begin{equation*}
C_{\text{bound}} = \left( 1+ \frac{2}{1-\frac{1}{\CSconstant}} \right)^2 \max\left\{ C_{1} + C_{6} + C_{11}, C_{2} + C_{3} + C_{4} + C_{5} + C_{7} + C_{8} + C_{9} + C_{10} \right\}.
\end{equation*}
This completes the proof.
\end{proof}

\noindent An immediate consequence of the above theorem is that the membrane strain satisfies the error bound
$$\int_{\Omega} \left( \left( \surfTens{\memStrain}({\bf u}) - \surfTens{\memStrain}({\bf u}_h) \right) : \frac{\mathbb{C}}{|\mathbb{C}|} : \left( \surfTens{\memStrain}({\bf u}) - \surfTens{\memStrain}({\bf u}_h) \right) \right) d\Omega \leq C_{\text{bound}} \left( \left( \frac{\zeta}{\ell} \right)^2 \left( \frac{h}{\ell} \right)^{2p-2} + \left( \frac{h}{\ell} \right)^{2p} \right) \| {\bf u} \|^2_{\left(H^{p+1}(\Omega)\right)^3}$$
and the bending strain satisfies the error bound
$$\frac{\ell^2}{12} \int_{\Omega} \left( \left( \surfTens{\bendStrain}({\bf u}) - \surfTens{\bendStrain}({\bf u}_h) \right) : \frac{\mathbb{C}}{|\mathbb{C}|} : \left( \surfTens{\bendStrain}({\bf u}) - \surfTens{\bendStrain}({\bf u}_h) \right) \right) d\Omega \leq C_{\text{bound}} \left( \left( \frac{h}{\ell} \right)^{2p-2} + \left( \frac{\zeta}{\ell} \right)^{-2} \left( \frac{h}{\ell} \right)^{2p} \right) \| {\bf u} \|^2_{\left(H^{p+1}(\Omega)\right)^3}.$$
Thus, when the thickness-to-domain-size ratio $\zeta/\ell$ is fixed, both the membrane strain and the bending strain converge as the mesh-to-domain-size ratio $h/\ell$ tends to zero.  Alternatively, when $h/\ell$ is fixed, the error bound for the membrane strain remains finite but the error bound for the bending strain tends to infinity as $\zeta/\ell$ tends to zero.  This is a consequence of \textbf{\emph{membrane locking}}, which affects virtually all Kirchhoff-Love shell discretizations relying on a primal (i.e., displacement only) formulation.  There are many different approaches to alleviate membrane locking, including the use of mixed methods wherein membrane strain is introduced as an additional variable \cite{Bathe1986}, but these approaches are not discussed further here since the focus is on weak enforcement of boundary conditions.

\subsection{\textit{A Priori} Error Estimates in Lower-Order Norms for NURBS-Based Kirchhoff-Love Shell Discretizations}

Using the well-known Aubin-Nitsche trick \cite[Chapter~4]{Strang1973}, we can also prove \textit{a priori} error estimates in low-order norms for NURBS-based Kirchhoff-Love shell discretizations.  The proof of this result is omitted for brevity.

\begin{theorem}[\textit{A Priori} Error Estimate in Lower-Order Norms for the Kirchhoff-Love Shell]
If $p \geq 2$, then for any ${\bf u} \in \left(H^{p+1}(\Omega)\right)^3$, we have the estimates
$$\| {\bf u} - {\bf u}_h \|^2_{\left(H^1(\Omega)\right)^3} \leq C_{\text{bound},1} \left( \frac{h}{\ell} \right)^{2p} \| {\bf u} \|^2_{\left(H^{p+1}(\Omega)\right)^3}$$
and
$$\| {\bf u} - {\bf u}_h \|^2_{\left(L^2(\Omega)\right)^3} \leq C_{\text{bound},2} \left( \frac{h}{\ell} \right)^{\min\left\{2p+2,4p-4\right\}} \| {\bf u} \|^2_{\left(H^{p+1}(\Omega)\right)^3},$$
where $C_{\text{bound,1}}$ and $C_{\text{bound,2}}$ are dimensionless constants independent of the mesh-to-domain-size ratio $h/\ell$, but dependent on the thickness-to-domain-size ratio $\zeta/\ell$, the normalized elasticity tensor $\mathbb{C}/|\mathbb{C}|$, the polynomial degree $p$, the normalized geometric mapping $\left({\bf x}(\bm{\xi}) - {\bf x}(\bm{0})\right)/\ell$, the trace constants $C^S_{\textup{tr},1}$, $C^S_{\textup{tr},2}$, $C^S_{\textup{tr},3}$, $C^S_{\textup{tr},4}$, and $C^S_{\textup{tr},5}$, the penalty constants $C^S_{\textup{pen},1}$, $C^S_{\textup{pen},2}$, $C^S_{\textup{pen},3}$, and $C^S_{\textup{pen},4}$, and the parametric mesh regularity.
\end{theorem}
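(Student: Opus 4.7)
The plan is to derive both estimates through the Aubin-Nitsche duality argument, adapted to the Nitsche formulation of Problem $(N_h^{\KLS})$. The ingredients are: Galerkin orthogonality $a_h^{\KLS}({\bf u}-{\bf u}_h,\delta{\bf u}_h)=0$ for all $\delta{\bf u}_h\in\mathcal{V}^{\KLS}_h$, a consequence of the shell analogue of Lemma~\ref{lemma:abstract_consistency}; continuity of $a_h^{\KLS}$ in the energy norm (the shell analogue of Lemma~\ref{lemma:abstract_continuity}); the energy-norm bound on ${\bf u}-{\bf u}_h$ just established in the preceding theorem; energy-norm interpolation applied to a suitable adjoint solution; and an elliptic regularity hypothesis for the Kirchhoff-Love adjoint, to be stated explicitly and justified using the shell regularity theory of \cite{Ciarlet2005}.

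For the $L^2$ estimate, I would introduce the dual problem of finding ${\bf z}\in\mathcal{V}^{\KLS}_{{\bf 0},0}\cap\tilde{\mathcal{V}}^{\KLS}$ such that $a^{\KLS}({\bf v},{\bf z}) = ({\bf u}-{\bf u}_h,{\bf v})_{(L^2(\Omega))^3}$ for all ${\bf v}\in\mathcal{V}^{\KLS}_{{\bf 0},0}$, with $\|{\bf z}\|_{(H^4(\Omega))^3}\lesssim\|{\bf u}-{\bf u}_h\|_{(L^2(\Omega))^3}$ under the adjoint-regularity hypothesis. The key algebraic observation is that, because $\mathcal{T}^{\KLS}{\bf z}={\bf 0}$, the Nitsche corrections involving $\mathcal{T}^{\KLS}{\bf z}$ vanish while the remaining correction $-\langle\mathcal{B}^{\KLS}{\bf z},\mathcal{T}^{\KLS}({\bf u}-{\bf u}_h)\rangle$ is exactly cancelled by the boundary term produced when the generalized Green's identity (Lemma~\ref{lemma:Greens_ID_KLS}) is used to rewrite $a^{\KLS}({\bf z},{\bf u}-{\bf u}_h)$. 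This yields the clean duality identity $\|{\bf u}-{\bf u}_h\|_{(L^2(\Omega))^3}^2 = a_h^{\KLS}({\bf u}-{\bf u}_h,{\bf z})$. Invoking Galerkin orthogonality to subtract $\mathcal{I}^{\KLS}_h{\bf z}$ and then continuity of $a_h^{\KLS}$ gives
\begin{equation*}
\|{\bf u}-{\bf u}_h\|_{(L^2(\Omega))^3}^2 \leq \vvvertiii{{\bf u}-{\bf u}_h}_{\KLS}\,\vvvertiii{{\bf z}-\mathcal{I}^{\KLS}_h{\bf z}}_{\KLS}.
\end{equation*}
With $\vvvertiii{{\bf u}-{\bf u}_h}_{\KLS}\lesssim (h/\ell)^{p-1}\|{\bf u}\|_{(H^{p+1}(\Omega))^3}$ from the preceding theorem and $\vvvertiii{{\bf z}-\mathcal{I}^{\KLS}_h{\bf z}}_{\KLS}\lesssim (h/\ell)^{\min\{p-1,2\}}\|{\bf z}\|_{(H^4(\Omega))^3}$ by energy-norm interpolation (the $\min$ arising because the bending contribution to the energy norm is of $H^2$-type while the adjoint has only $H^4$ regularity), dividing through by $\|{\bf u}-{\bf u}_h\|_{(L^2(\Omega))^3}$ and squaring produces the exponent $\min\{4p-4,2p+2\}$.

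The $H^1$ estimate follows by the same argument with dual data taken as the Riesz representation of $({\bf u}-{\bf u}_h,\cdot)_{(H^1(\Omega))^3}$: the adjoint solution ${\bf z}'$ then lies in $(H^3(\Omega))^3$ with $\|{\bf z}'\|_{(H^3(\Omega))^3}\lesssim\|{\bf u}-{\bf u}_h\|_{(H^1(\Omega))^3}$, and energy-norm interpolation yields $\vvvertiii{{\bf z}'-\mathcal{I}^{\KLS}_h{\bf z}'}_{\KLS}\lesssim (h/\ell)\|{\bf z}'\|_{(H^3(\Omega))^3}$ for $p\geq 2$. Combining with $\vvvertiii{{\bf u}-{\bf u}_h}_{\KLS}\lesssim (h/\ell)^{p-1}$ delivers the stated $(h/\ell)^{2p}$ rate. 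A tempting shortcut via Hilbert interpolation $\|{\bf u}-{\bf u}_h\|_{H^1}^2\lesssim\|{\bf u}-{\bf u}_h\|_{L^2}\|{\bf u}-{\bf u}_h\|_{H^2}$ using the $L^2$ bound above is insufficient: it yields only $(h/\ell)^{2p-1}$ at $p=2$, so direct $H^{-1}$-duality is required.

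The main obstacle is not the abstract duality machinery but the elliptic regularity hypothesis for the Kirchhoff-Love adjoint on a curved manifold with a mixed Dirichlet-Neumann boundary: $(H^4)^3$-regularity implicitly restricts the admissible boundary partition (e.g., excluding re-entrant corners and certain clamped-free interfaces) and forces smoothness of the midsurface. I would state this as an explicit standing assumption and defer its verification to shell regularity theory. A secondary but tedious point is the dimensional bookkeeping of thickness-to-length and elasticity-magnitude ratios through the interpolation chain, needed to certify that $C_{\text{bound},1}$ and $C_{\text{bound},2}$ are genuinely independent of $h/\ell$.
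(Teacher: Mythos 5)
The paper merely cites Strang and Fix and declares the proof ``omitted for brevity,'' so you are filling in precisely the argument the authors had in mind. Your machinery is correct: a dual problem with homogeneous Dirichlet data so that one symmetry term and the penalty term of $a_h^{\KLS}$ vanish, cancellation of the remaining $-\langle\mathcal{B}^{\KLS}{\bf z},\mathcal{T}^{\KLS}({\bf u}-{\bf u}_h)\rangle$ against the boundary piece of the generalized Green's identity, the clean identity $\|{\bf u}-{\bf u}_h\|^2_{(L^2)^3}=a_h^{\KLS}({\bf u}-{\bf u}_h,{\bf z})$, then Galerkin orthogonality, continuity, energy-norm interpolation of the dual, and the adjoint regularity shift. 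The exponent bookkeeping is right: $(p-1)+\min\{p-1,2\}=\min\{2p-2,p+1\}$ squared gives $\min\{4p-4,2p+2\}$, and you are also correct that the shortcut $\|e\|_{H^1}^2\lesssim\|e\|_{L^2}\|e\|_{H^2}$ loses a power at $p=2$, so a direct duality for the $H^1$ bound is unavoidable.

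The genuine gap is in the $H^1$ estimate. For the $L^2$ case the dual solution ${\bf z}$ lies in $(H^2)^2\times H^4$, i.e.\ in $\tilde{\mathcal{V}}^{\KLS}$ as defined in \eqref{eqn:V_tilde_KLS}, so Lemma~\ref{lemma:Greens_ID_KLS} and the continuity bound of Lemma~\ref{lemma:abstract_continuity} apply verbatim, and $\vvvertiii{{\bf z}-\mathcal{I}^{\KLS}_h{\bf z}}_{\KLS}$ is finite. For the $H^1$ case, however, your adjoint ${\bf z}'$ has only $H^3$ regularity in the out-of-plane component, which falls short of the $H^4$ required by $\tilde{\mathcal{V}}^{\KLS}$. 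Concretely, the $\eta^{\KLS}$-term in the energy norm involves the ersatz shear $T_3$, hence boundary traces of \emph{third} derivatives of $z_3'$; for $z_3'\in H^3(\Omega)$ these traces live only in $H^{-1/2}(\Gamma)$, not $L^2(\Gamma)$, so $\vvvertiii{{\bf z}'-\mathcal{I}^{\KLS}_h{\bf z}'}_{\KLS}$ is not finite as written and Lemma~\ref{lemma:abstract_continuity} does not apply. The same deficiency prevents the Green's identity on which the cancellation step rests from holding for ${\bf w}={\bf z}'\notin\tilde{\mathcal{V}}^{\KLS}$. To close this you would need one of: (i) a strengthened adjoint regularity hypothesis placing $z_3'$ in $H^{3+\epsilon}$ with $\epsilon>1/2$; (ii) a replacement of the $\mathcal{B}^{\KLS}$-dependent part of the dual-side norm by element-wise interior quantities via scaled trace inequalities, so that no boundary trace of third derivatives of ${\bf z}'$ is ever taken; or (iii) one further integration by parts on the offending boundary term. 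You rightly flag shell adjoint regularity as the standing hypothesis, but the level you invoke is one notch too low for the continuity and Green's-identity steps as sketched.
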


\noindent Note that it is impossible to make the bounding constants $C_{\text{bound,1}}$ and $C_{\text{bound,2}}$ appearing in the above theorem independent of the thickness-to-domain-size ratio $\zeta/\ell$ since membrane locking occurs in the zero thickness limit. Also, although we are able to employ a discretization comprised of quadratic splines, we only expect to see optimal convergence rates in the energy and $H^1$-equivalent norms for such a discretization. The $L^2$ norm cannot exceed convergence rates faster than second-order for quadratic discretizations of fourth-order partial differential equations \cite[Chapter~2]{Strang1973}.


\section{Numerical Results}
\label{sec:num_results}

In this section, we demonstrate the robustness and effectiveness of our proposed methodology through numerical experiments. The robustness is shown by the ability of our framework to accommodate a wide variety of geometric configurations with complex boundary conditions, while the effectiveness is demonstrated by the discretization obtaining optimal convergence rates in both the associated energy- and $L^2$-norms.

\begin{figure}[ht!]
  \centering
  \begin{subfigure}[b]{0.24\textwidth}
    \centering
    \includegraphics[width=\textwidth]{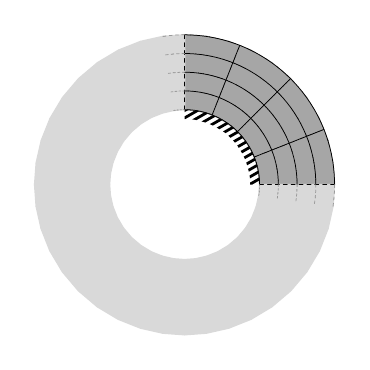}
    \caption*{Problem 1}
  \end{subfigure}
  \begin{subfigure}[b]{0.24\textwidth}
    \centering
    \includegraphics[width=\textwidth]{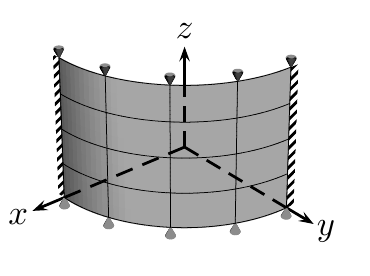}
    \caption*{Problem 3}
  \end{subfigure}
  \begin{subfigure}[b]{0.24\textwidth}
    \centering
    \includegraphics[width=\textwidth]{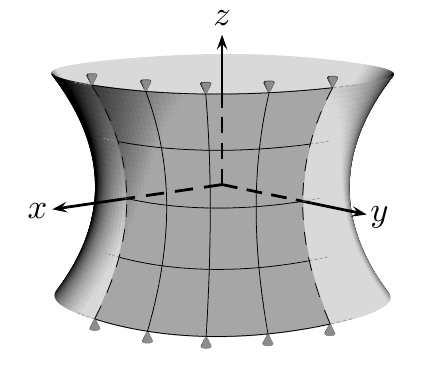}
    \caption*{Problem 5}
  \end{subfigure}
  \begin{subfigure}[b]{0.24\textwidth}
    \centering
    \includegraphics[width=\textwidth]{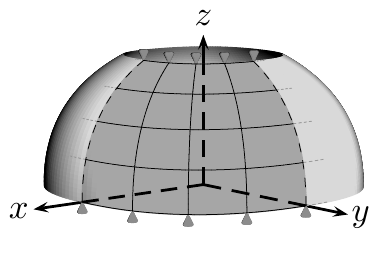}
    \caption*{Problem 7}
  \end{subfigure}
  \\
  \begin{subfigure}[b]{0.24\textwidth}
    \centering
    \includegraphics[width=\textwidth]{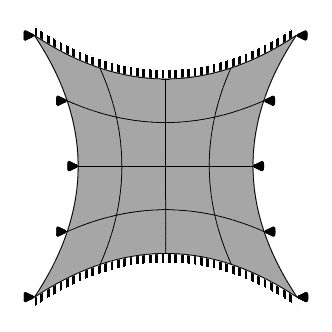}
    \caption*{Problem 2}
  \end{subfigure}
  \begin{subfigure}[b]{0.24\textwidth}
    \centering
    \includegraphics[width=\textwidth]{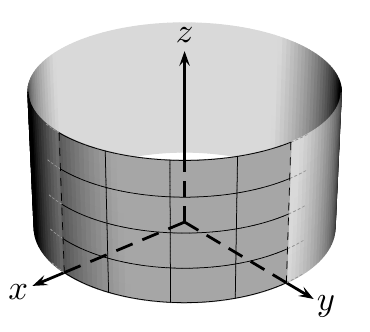}
    \caption*{Problem 4}
  \end{subfigure}
  \begin{subfigure}[b]{0.24\textwidth}
    \centering
    \includegraphics[width=\textwidth]{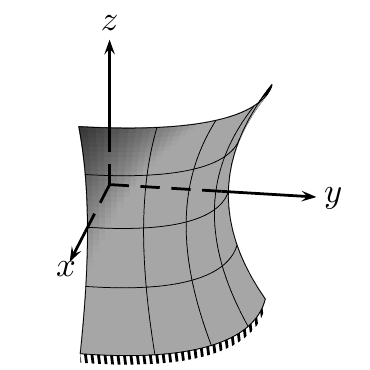}
    \caption*{Problem 6}
  \end{subfigure}
  \begin{subfigure}[b]{0.24\textwidth}
    \centering
    \includegraphics[width=\textwidth]{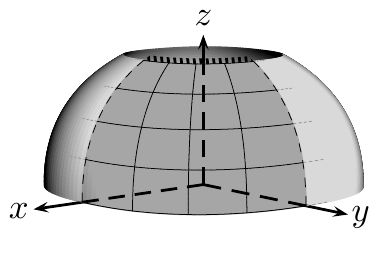}
    \caption*{Problem 8}
  \end{subfigure}
\caption{
The eight problems that comprise the linear shell obstacle course. From left to right, the first column contains flat geometries, the second column contains parabolic geometries, the third column contains hyperbolic geometries, and the fourth column contains elliptic geometries. Boundaries with prescribed displacement and bending moment (e.g., simply supported boundaries) are denoted by \protect\TWODSS/\protect\THREEDSS, boundaries with prescribed displacement and normal rotation (e.g., clamped boundaries) are denoted by \protect\CLAMPED, boundaries with prescribed ersatz traction and normal rotation (e.g., symmetric boundaries) are denoted by \protect\SYM, and boundaries with prescribed ersatz traction and bending moment (e.g., free boundaries) are denoted by \protect\FREE.
}
\label{fig:shell_obstacle_course}
\end{figure}

Performing numerical validation for shells is particularly difficult for several reasons. There are few, if any, analytic solutions available for assessing discretization accuracy in Sobolev-equivalent norms. For this reason, many have resorted to measuring performance by pointwise measures, such as the displacement at the location of point-load application or where the point of maximal displacement is likely to occur. The so-called ``shell obstacle course'' found in much of the related literature is perhaps the most common suite of problems where converged pointwise values are used to indicate validity \cite{Scordelis1964,Belytschko1985}. Unfortunately, these values only agree up to a few digits of precision and are therefore not reliable for a rigorous assessment of convergence. Furthermore, as we have no theoretical error estimates in terms of pointwise quantities, this approach does not suffice for our numerical validation.

Separately, we also observe that for fine meshes discretized with high-order elements, roundoff errors due to ill-conditioning of the resulting linear system often dominate the solution, as is evident in the forthcoming numerical results. This presents difficulty in using high-resolution solutions as a benchmark for convergence. To combat roundoff errors in the asymptotic regime before this phenomenon dominates the solution, we employ three iterations of residual-based iterative refinement to all of our forthcoming problems \cite{Wilkinson1948,Wilkinson1963}. To demonstrate that we truly obtain optimal convergence rates, we instead resort to ``manufactured'' forcing functions that, when applied to the geometries we consider, yield known displacement fields. For simple and flat domains, this task is relatively straightforward and amounts to applying the differential operator $\mathcal{L}$ to a desired solution field ${\bf u}$ to obtain the corresponding $\applied{\textbf{\forcing}}$. Manufacturing forcing functions for shell problems posed over curved manifolds is conceptually no different, but in practice it is a much more involved task and care must be taken at every step. To facilitate this process, we carefully implemented all the steps in Mathematica, which allows many of the operations to be done symbolically. We have found one other instance where such a process has been performed \cite{gfrerer2018code}, however our linear shell obstacle is comprehensive in that it encompasses all possible boundary condition configurations. Moreover, we have provided the forcing functions and their corresponding displacement, strain, and stress fields from our linear shell obstacle course for the research community\footnote{https://github.com/wdas/shell-obstacle-course}.

In order to make our testing as exhaustive as possible, we devised a new linear shell obstacle course, which covers flat, parabolic, hyperbolic, and elliptic geometries subject to simply supported, clamped, free, and symmetric boundary conditions (see Figure~\ref{fig:shell_obstacle_course}). We fix the shell thickness to be $\thickness = 0.1 \ m$ and set the material parameters $E = 10 \ MPa$ and $\nu = 0.3$ in our linear constitutive model \eqref{eqn:constitutive} for all of the problems we consider.  Note that in the forthcoming subsections, all displacement fields presented are in meters.  Our methodology is free of membrane locking for all of the problems we consider because the thickness-to-domain-size ratio, $\thickness/\ell$, is always $0.1$. This enables us to numerically examine asymptotic rates of convergence. For all the experiments, we employ uniform, tensor-product meshes in the parametric domain. Since the geometric mappings are non-degenerate, the corresponding physical mesh is comprised of curvilinear quadrilateral elements.

Recall Remark~\ref{remark:shellBCs} wherein it is mentioned that the Kirchhoff-Love shell accommodates four common types of boundary conditions: clamped, simply supported, symmetric, and free. In terms of Dirichlet boundary conditions, a simply supported shell is one such that the boundary displacement is zero while the normal rotation is unconstrained. A shell with symmetric boundary conditions is one such that the boundary displacement is unconstrained while normal rotation is zero. Finally, a shell with free boundary conditions is one such that both the boundary displacement and the normal rotation are unconstrained. From energetic principles, this implies that the quantities that are energetically conjugate to those that are unconstrained must vanish. For simply supported structures, this is the bending moment; for those with symmetric boundaries, this is the ersatz traction; and for those with free boundaries, this is both. Since it is nearly impossible to manufacture a solution exhibiting exactly these properties,
our linear shell obstacle course simply emulates this behavior by instead prescribing nonhomogeneous
boundary conditions in lieu of those that should be unconstrained. However, for readability, we still refer to these boundary condition types by their classical name throughout this section even though it is understood that they are in fact emulated.

The presence of four covariant differentiation operators in the underlying strong formulation of the Kirchhoff-Love shell yields complex forcing functions that are not only very nonlinear, they are also often numerically unstable to evaluate in double precision. For this reason, it is recommended that these entities be evaluated in extended precision and truncated to the operating precision only after all operations comprising the function have been completed. For our particular results, we compute forcing function data as well as strain and stress tensor data with 100 digits of accuracy, truncate the data to double precision, and save the data in external files that are later read by our isogeometric analysis routines. This ensures that the function values evaluated at the quadrature points are accurate enough for our formation and assembly routines, as well as for post-processing. Furthermore, to handle the nonlinearities of these functions, we employ a $25 \times 25$-point quadrature rule to avoid under-integration. Alternatively, one could use an adaptive quadrature scheme to overcome this issue without suffering from the curse of dimensionality for tensor product meshes, since these nonlinearities are most prevalent near the boundaries and at ``corner'' points.

For the benefit of the community, we provide a Mathematica notebook containing the problem data for our shell obstacle course that allows the results presented in this section to be reconstructed. In particular, the notebook includes geometric parameterizations, displacement fields, the strain and stress tensors for bending and membrane action, and the forcing function. For convenience, the control meshes and NURBS weights associated with the geometries we consider in our numerical results are also tabulated in \ref{sec:Geo_Param}. In the following, each manufactured displacement field is denoted by a superscript number in parenthesis that indicates the problem number. These numbers correspond to the tabulated geometry data in \ref{sec:Geo_Param} as well as in the supplemental notebook.

\subsection{Flat Geometry}

We begin our presentation of numerical results with the examples having flat geometric configurations. In this case, the in-plane and out-of-plane phenomena are completely decoupled due to the lack of curvature. This property is useful for determining that Nitsche's method is implemented correctly for in-plane and out-of-plane behaviors separately. Regardless of this decoupling, we still consider flat plate-membrane systems subject to both in-plane and out-of-plane displacement fields.

The first problem set we consider is comprised of (i) a NURBS-mapped, annular domain and (ii) an astroid domain as shown in Figure~\ref{fig:shell_obstacle_course}. Note that the astroid domain is not truly an astroid by its mathematical definition, but rather closely resembles one. The annular domain is modeled through a quarter-annulus with symmetric boundary conditions employed on the straight edges. This domain is subject to a linear, radial displacement in-plane and an exponential transverse displacement field. Moreover it accommodates a clamped boundary on the inner radius and a free boundary on the outer radius. More specifically, the displacement field for Problem 1 over the annular domain is characterized by

\begin{equation*}
  {\bf u}^{(1)}\left( \xi^1, \xi^2 \right) :=  \left( \frac{\xi^1}{\|\aVec_1\|} \right) \aVec_1 + \left( e^{\xi^1} - 1 \right)\xi^1 \aVec_3.
\end{equation*}

By comparison, the astroid domain is loaded such that the resulting in-plane displacement field is a vortex with no displacement on the domain boundary and a sinusoidal transverse displacement field. This choice of displacement field effectively emulates a plate with two, simply supported edges on opposite ends and clamped edges on the remaining boundaries. The displacement field for Problem 2 over the astroid domain is given by the following set of Cartesian displacement modes:

\begin{equation*}
  {\bf u}^{(2)}\left( \xi^1, \xi^2 \right) := \left( \begin{array}{c}
  u_x\\
  u_y\\
  u_z \end{array} \right) = \left( \begin{array}{c}
    \left( \xi^1 - 1 \right)^2 \left( \xi^1 \right)^2 \left( \frac{1}{2} - \xi^2 \right) (1 - \xi^2) \xi^2 \vspace{1pt}\\
    \left( \xi^2 - 1 \right)^2 \left( \xi^2 \right)^2 \left( \frac{1}{2} - \xi^1 \right) (1 - \xi^1) \xi^1 \vspace{1pt}\\
    \left( 1 - \xi^1 \right) \xi^1 \sin(\pi \xi^1) \sin(\pi \xi^2) \end{array} \right).
\end{equation*}

\begin{figure}[t!]
  \includegraphics{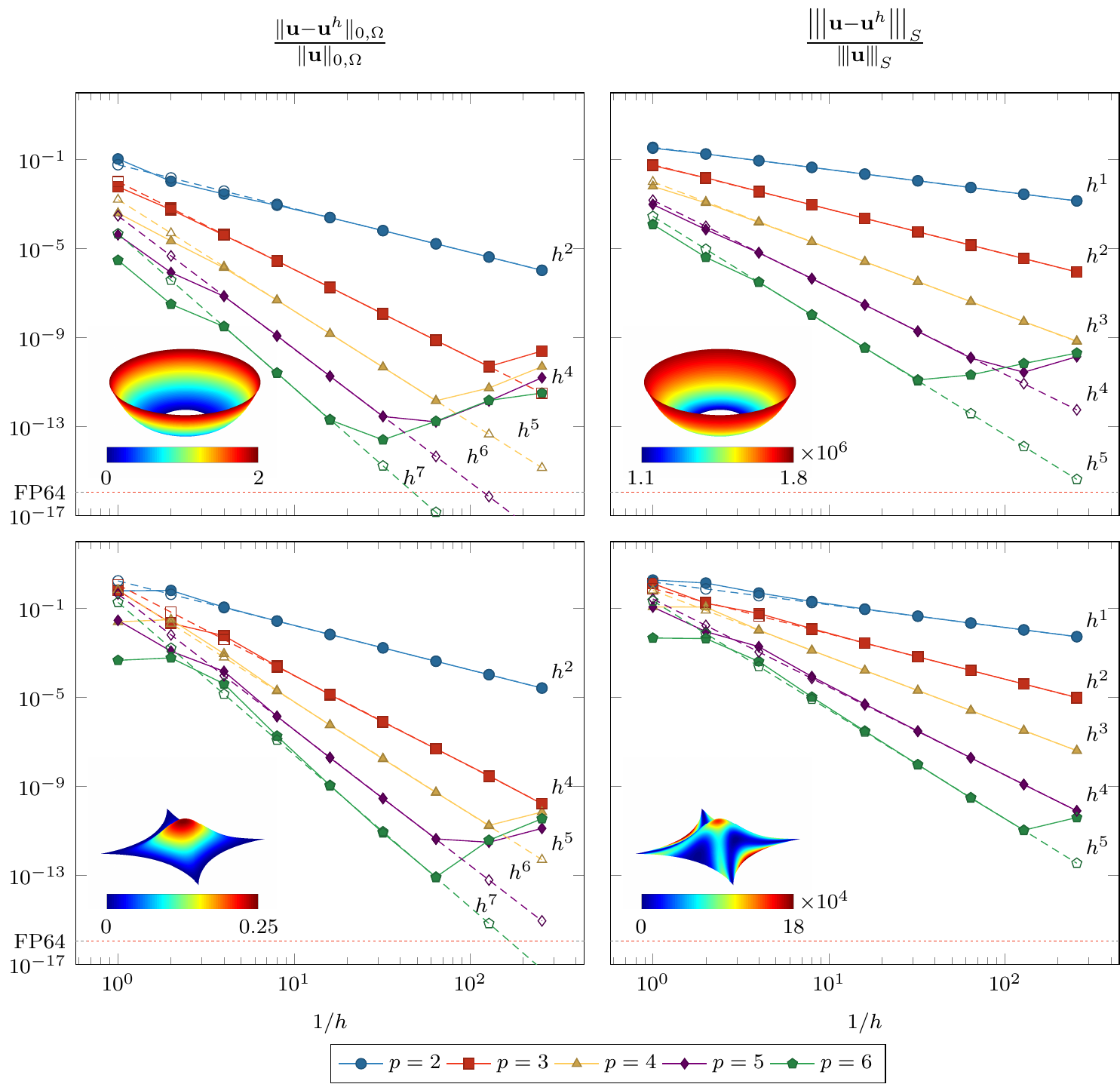}
  \caption{The convergence behavior of the annular problem in the $L^2$-norm (top left) and energy norm (top right) are shown. The convergence results for the astroid problem are shown in the $L^2$-norm (bottom left) and the energy norm (bottom right). Optimal convergence rates are observed with their theoretical counterparts shown as dashed lines with hollow, identical markers. The magnitude of the displacement field is plotted over the geometry for plots pertaining to the $L^2$-norm, while the total internal energy density is plotted over the geometry for plots pertaining to the energy norm.}
  \label{fig:flat_results}
\end{figure}

As is clearly demonstrated in Figure~\ref{fig:flat_results}, optimal convergence rates are obtained in both the $L^2$-norm for $p>2$ and in the energy norm for all polynomial degrees of discretization considered. Note that the convergence rate in the $L^2$-norm is sub-optimal for $p=2$, while the convergence rate in the energy norm is optimal. This phenomenon is expected and will be observed for all problems considered in the linear shell obstacle course. For an elaboration, refer to \cite[Chapter~2]{Strang1973}. For large $p$ and small $h$, we observe the aforementioned roundoff divergence due to matrix ill-conditioning.

\subsection{Parabolic Geometry}

The next problem class that we consider are shells over a parabolic geometry, namely, a NURBS-parameterized cylinder. In this instance, we encounter a coupling between in-plane and out-of-plane behaviors due to the curvature of the shell body. First, we consider a NURBS-mapped quarter-cylinder domain and, next, we model a full cylindrical shell by employing symmetric boundary conditions across the edges of the quarter-cylinder. In the first configuration, we apply a forcing function such that the resulting displacement field is a quartic-by-quadratic polynomial. This choice of displacement field emulates clamped and simply supported boundary conditions. Moreover, the displacement field for Problem 3 over the quarter-cylinder is given by the following:

\begin{equation*}
  {\bf u}^{(3)}\left( \xi^1, \xi^2 \right) := - \left( \xi^1 - 1\right)^2 \left( \xi^1 \right)^2 \xi^2 \left(\xi^2 - 1 \right) \aVec_3.
\end{equation*}

The second problem configuration applies a forcing function such that the resulting displacement field is a sinusoid in the radial direction. This displacement field has free boundary conditions along the axis of the cylinder. The displacement field for Problem 4 over the cylindrical domain is given by the following:

\begin{equation*}
  {\bf u}^{(4)}\left( \xi^1, \xi^2 \right) := \frac{1}{2} \cos \left( \pi \xi^1 \right) \aVec_3.
\end{equation*}

\begin{figure}[t!]
  \includegraphics{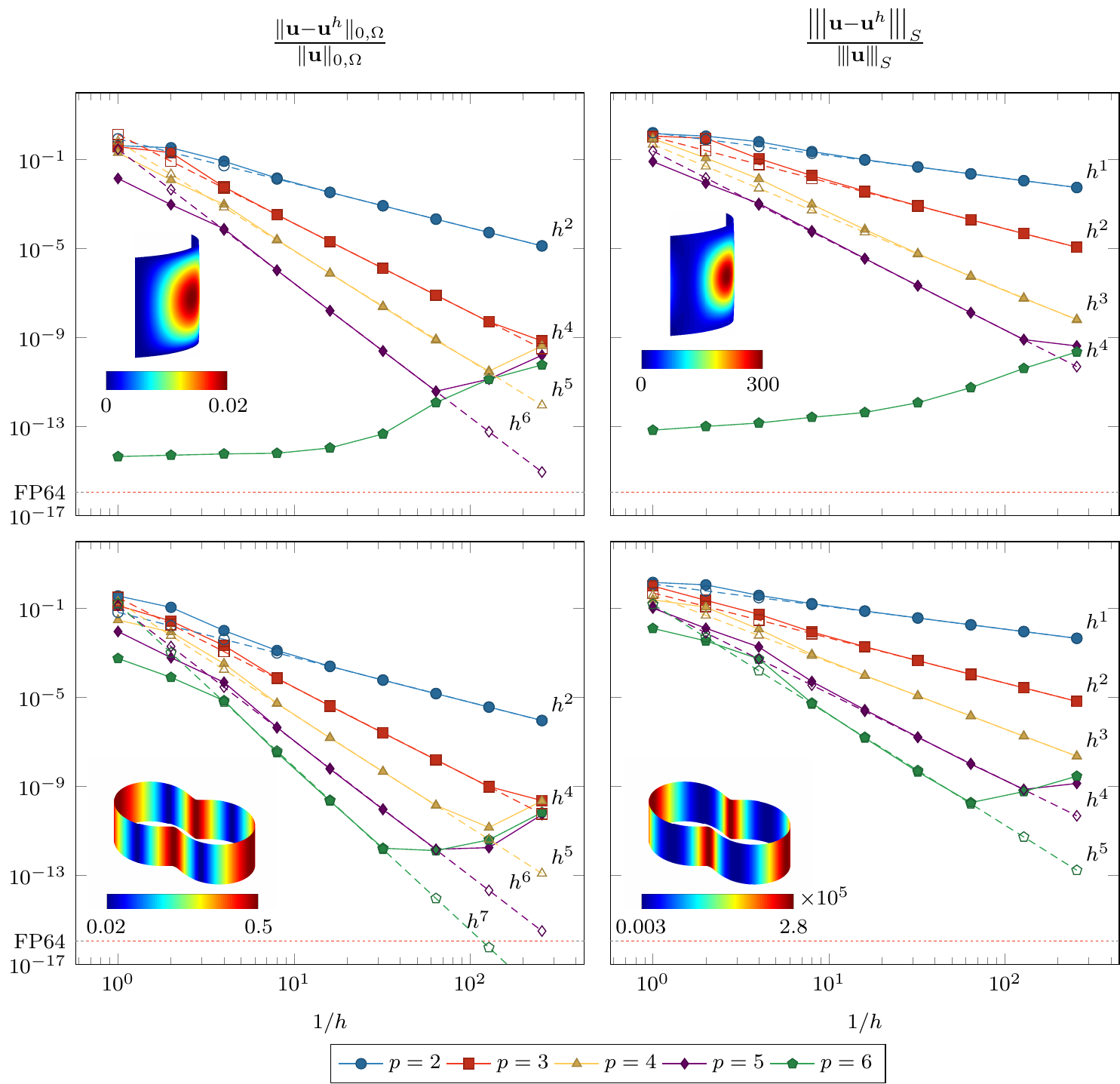}
  \caption{The convergence behavior of the quarter-cylinder from configuration 1 in the $L^2$-norm (top left) and energy norm (top right) are shown. The convergence results for the cylinder in configuration 2 are shown in the $L^2$-norm (bottom left) and in the energy norm (bottom right). Optimal convergence rates are observed with their theoretical counterparts shown as dashed lines with hollow, identical markers. The magnitude of the displacement field is plotted over the geometry for plots pertaining to the $L^2$-norm, while the total internal energy density is plotted over the geometry for plots pertaining to the energy norm.}
  \label{fig:par_results}
\end{figure}

Clearly, the results depicted in Figure~\ref{fig:par_results} demonstrate that the optimal convergence rates are obtained in the $L^2$-norm for $p>2$ and the energy norm for all polynomial degrees of discretization. Once again, the convergence rate in the $L^2$-norm is sub-optimal for $p=2$, while the convergence rate in the energy norm is optimal. Note that the displacement field for Problem 3 is in the span of biquartic polynomial basis functions. Moreover, the geometric mapping is a rational quadratic, as the NURBS weighting function is a quadratic polynomial. Therefore, we obtain machine precision for any mesh size with a sixth-order discretization in both $L^2$- and energy norms until the ill-conditioning roundoff divergence begins. In the case of the energy norm, we never truly obtain floating-point machine precision. This is also due to the ill-conditioning of the linear system and the computation of the strain energy.

\subsection{Hyperbolic Geometry}

This next class of geometries considered pertains to hyperbolic configurations. Once again, geometric curvatures couple the in-plane and out-of-plane effects; however, contrary to the previous two scenarios, this geometry is doubly curved and, hence, has a nonzero Gaussian curvature. Note that in this instance, the hyperbolic paraboloid is only an approximation in the sense that it is not a true NURBS domain but rather a B-spline approximation, i.e., $w(\bm{\xi}) \equiv 1$. This choice is made for sake of simplicity in solution field manufacturing and convergence analysis; it does not alter the hyperbolic classification of the geometry. The resulting forcing function, as well as the stress and strain tensors, for the NURBS-mapped hyperbolic paraboloid are drastically more complex than the polynomial counterpart.

The first problem configuration we consider is a full hyperbolic paraboloid that is modeled through the use of symmetric boundary conditions. The top and bottom of the hyperbolic paraboloid have simply supported boundary conditions and the shell is subject to a loading such that the resulting geometry is a B-spline approximation of a cylinder. The displacement field for Problem 5 over the hyperbolic paraboloid domain is given by

\begin{equation*}
  {\bf u}^{(5)}\left( \xi^1, \xi^2 \right) := \left( \begin{array}{c}
  u_x\\
  u_y\\
  u_z \end{array} \right) = \left( \begin{array}{c}
    \sqrt{2} \left( \left( \xi^1 \right)^2 - 1 \right) \left( \xi^2 -1 \right) \xi^2 \vspace{1pt}\\
    \sqrt{2} \left( \xi^1 - 2 \right) \xi^1 \left( \xi^2 -1 \right) \xi^2 \vspace{1pt}\\
    0 \end{array} \right).
\end{equation*}

In the second configuration, we instead consider a quarter of this hyperbolic paraboloid. In this scenario, one edge is clamped while the other edges are free and the entire system is subject to a forcing such that the resulting displacement field is a sinusoid. In particular, the displacement field for Problem 5 over the quarter-hyperbolic paraboloid domain is given by

\begin{equation*}
  {\bf u}^{(6)}\left( \xi^1, \xi^2 \right) := \left( \begin{array}{c}
  u_x\\
  u_y\\
  u_z \end{array} \right) = \left( \begin{array}{c}
    \xi^2 \sin \left( \frac{\pi}{2} \xi^2 \right) \vspace{1pt}\\
    \xi^2 \sin \left( \frac{\pi}{2} \xi^2 \right) \vspace{1pt}\\
    0 \end{array} \right).
\end{equation*}

\begin{figure}[t!]
  \includegraphics{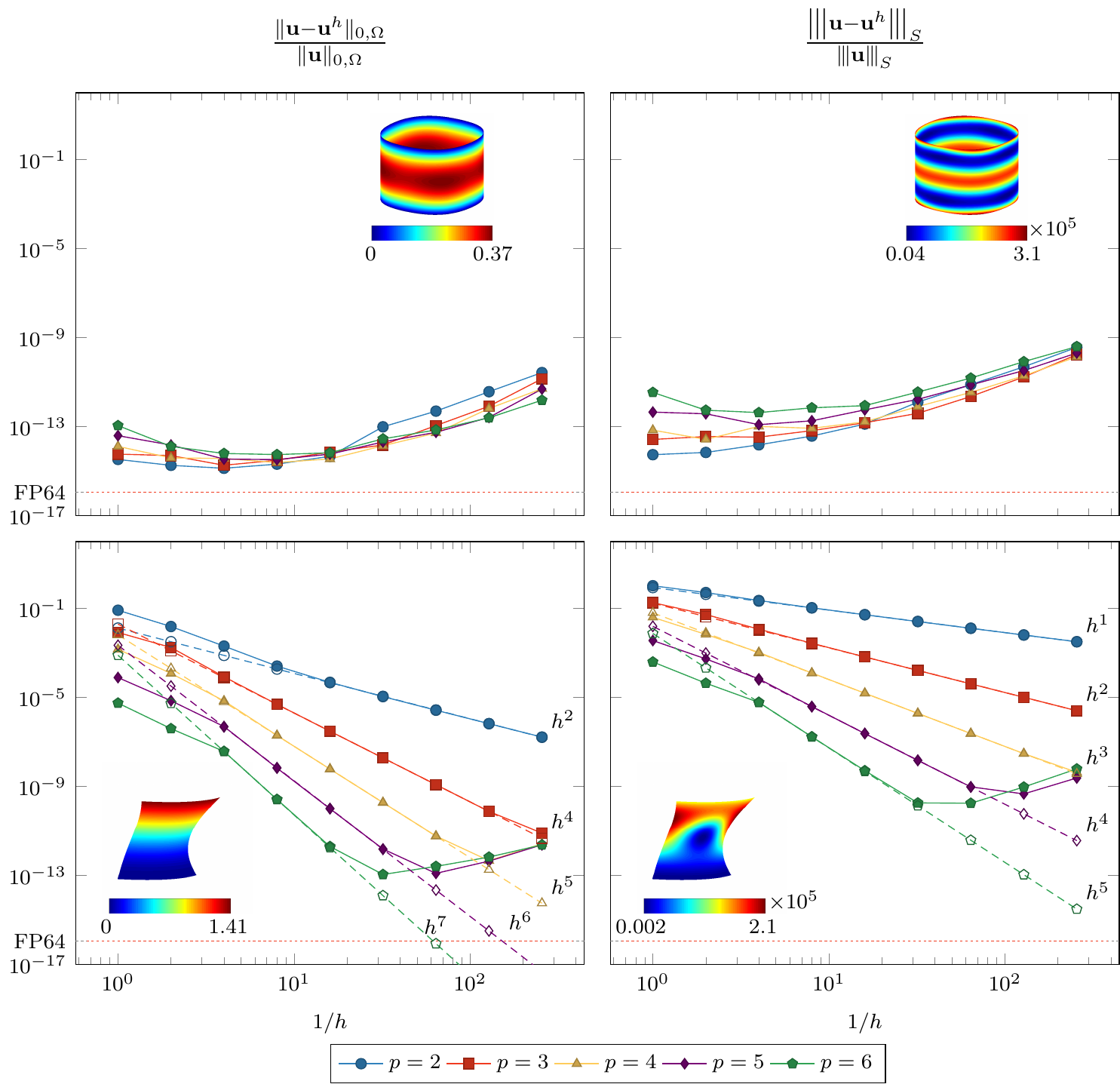}
  \caption{The convergence behavior of the hyperbolic paraboloid problem in configuration 1 are shown in the $L^2$-norm (top left) and energy norm (top right) are shown. The convergence results for the quarter-hyperbolic paraboloid in configuration 2 are shown in the $L^2$-norm (bottom left) and the energy norm (bottom right). Optimal convergence rates are observed with their theoretical counterparts shown as dashed lines with hollow, identical markers. The magnitude of the displacement field is plotted over the geometry for plots pertaining to the $L^2$-norm, while the total internal energy density is plotted over the geometry for plots pertaining to the energy norm.}
  \label{fig:hyp_results}
\end{figure}

\begin{figure}[t!]
  \includegraphics{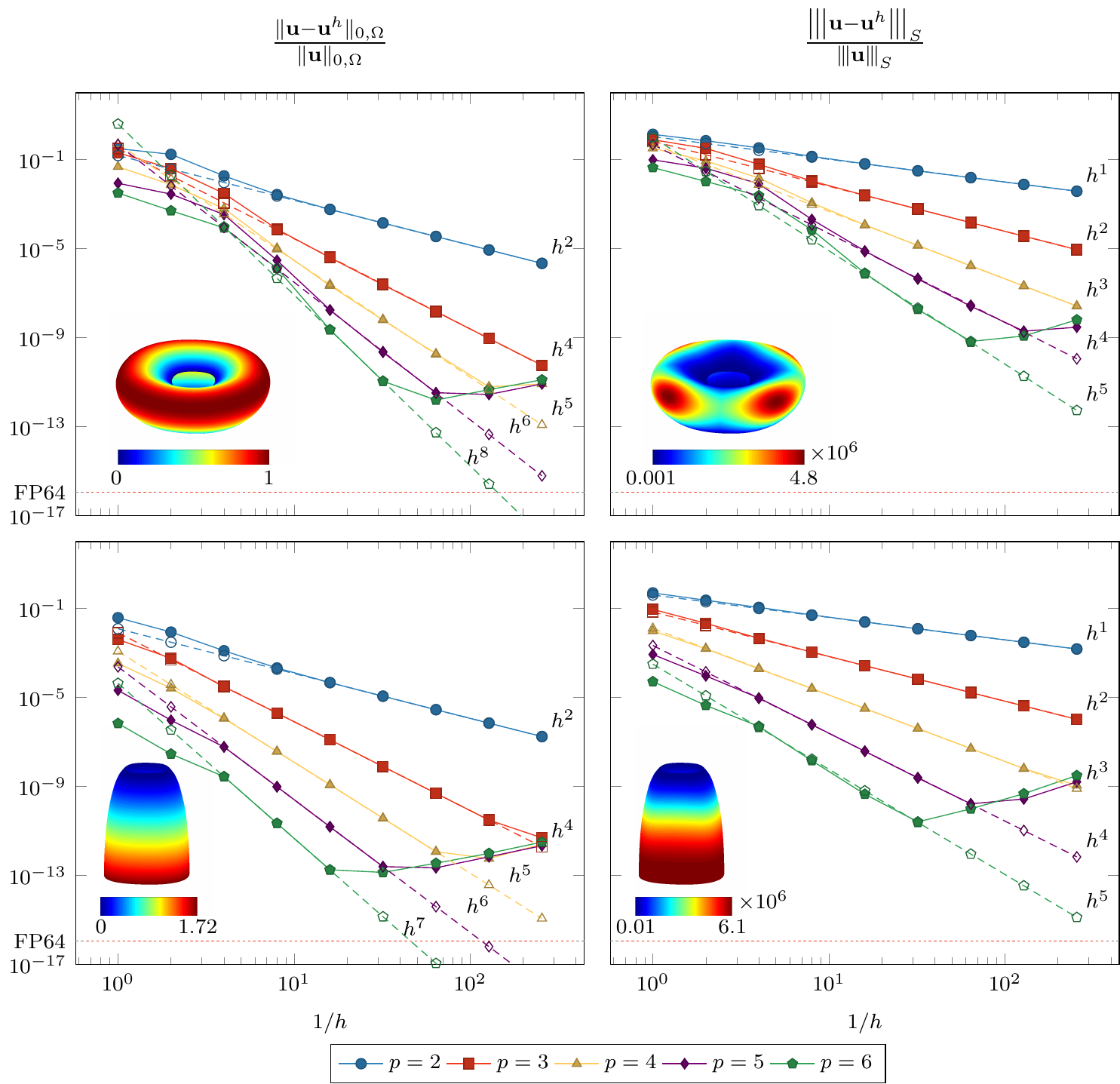}
  \caption{The convergence behavior of the hemispherical shell in configuration 1 is shown in the $L^2$-norm (top left) and in the energy norm (top right). The convergence results for the hemispherical shell in configuration 2 are shown in the $L^2$-norm (bottom left) and the energy norm (bottom right). Optimal convergence rates are observed with their theoretical counterparts shown as dashed lines with hollow, identical markers. The magnitude of the displacement field is plotted over the geometry for plots pertaining to the $L^2$-norm, while the total internal energy density is plotted over the geometry for plots pertaining to the energy norm.}
  \label{fig:ell_results}
\end{figure}

Figure~\ref{fig:hyp_results} demonstrate that our discretization yields optimal convergence behavior in this case in both the $L^2$-norm for $p>2$ and in the energy norm for all polynomial degrees considered. We once again observe sub-optimal convergence rates in the $L^2$ norm, while the energy norm is unaffected. Note that the displacement field for the first configuration is in the span of all polynomial degrees considered. This is because the B-spline approximations for both the hyperbolic paraboloid and the cylinder are quadratic, and consequently so is their difference. Therefore, we obtain machine precision for all degrees of discretization in this instance. Once again, matrix ill-conditioning presents itself in the form of roundoff divergence. The effects of this ill-conditioning are also present in the preasymptotic region where true floating-point machine precision is not obtained, as was the case for the $p=6$ discretization of Problem 3. In the second configuration, we obtain the expected convergence behavior.

\subsection{Elliptic Geometry}

The final class of geometries considered here are elliptic configurations in the form of a hemispherical shell. Much like the hyperbolic case, these geometries also have nonzero Gaussian curvature so, as before, we only approximate the hemisphere by letting $w(\bm{\xi}) \equiv 1$.

The first problem configuration considered is a hemispherical shell subject to an internal pressure resulting in a radial sinusoidal displacement field. This problem is subject to symmetric boundary conditions along the edges of the hemispherical section as well as simply supported boundary conditions on the top and bottom of the shell. In particular, the displacement field for Problem 7 over the hemispherical shell domain is given by

\begin{equation*}
  {\bf u}^{(7)}\left( \xi^1, \xi^2 \right) := - \sin \left( \pi \xi^1 \right) \aVec_3.
\end{equation*}

The second configuration is the same hemispherical shell employing symmetric boundary conditions along the edges to emulate a full hemisphere, but with the top edge clamped and the bottom edge free in this scenario. The shell is subject to a loading such that the resulting displacement field is exponential and oriented downward. The displacement field for Problem 8 over the hemispherical shell domain is given by

\begin{equation*}
  {\bf u}^{(8)}\left( \xi^1, \xi^2 \right) := \left( \begin{array}{c}
  u_x\\
  u_y\\
  u_z \end{array} \right) = \left( \begin{array}{c}
    0 \vspace{1pt}\\
    0 \vspace{1pt}\\
    \left( \xi^1 - 1 \right)\left( e - e^{\xi^1} \right) \end{array} \right).
\end{equation*}

The convergence analysis shown in Figure~\ref{fig:ell_results} demonstrates that optimal convergence rates are once again obtained in the $L^2$-norm for $p>2$ and in the energy norm for all polynomial degrees discretized over the elliptic geometries. The convergence rate in the $L^2$ norm for $p=2$ is inhibited as discussed previously.

\begin{figure}[t!]
  \centering
  \begin{subfigure}[t]{0.99\textwidth}
    \centering
    \includegraphics{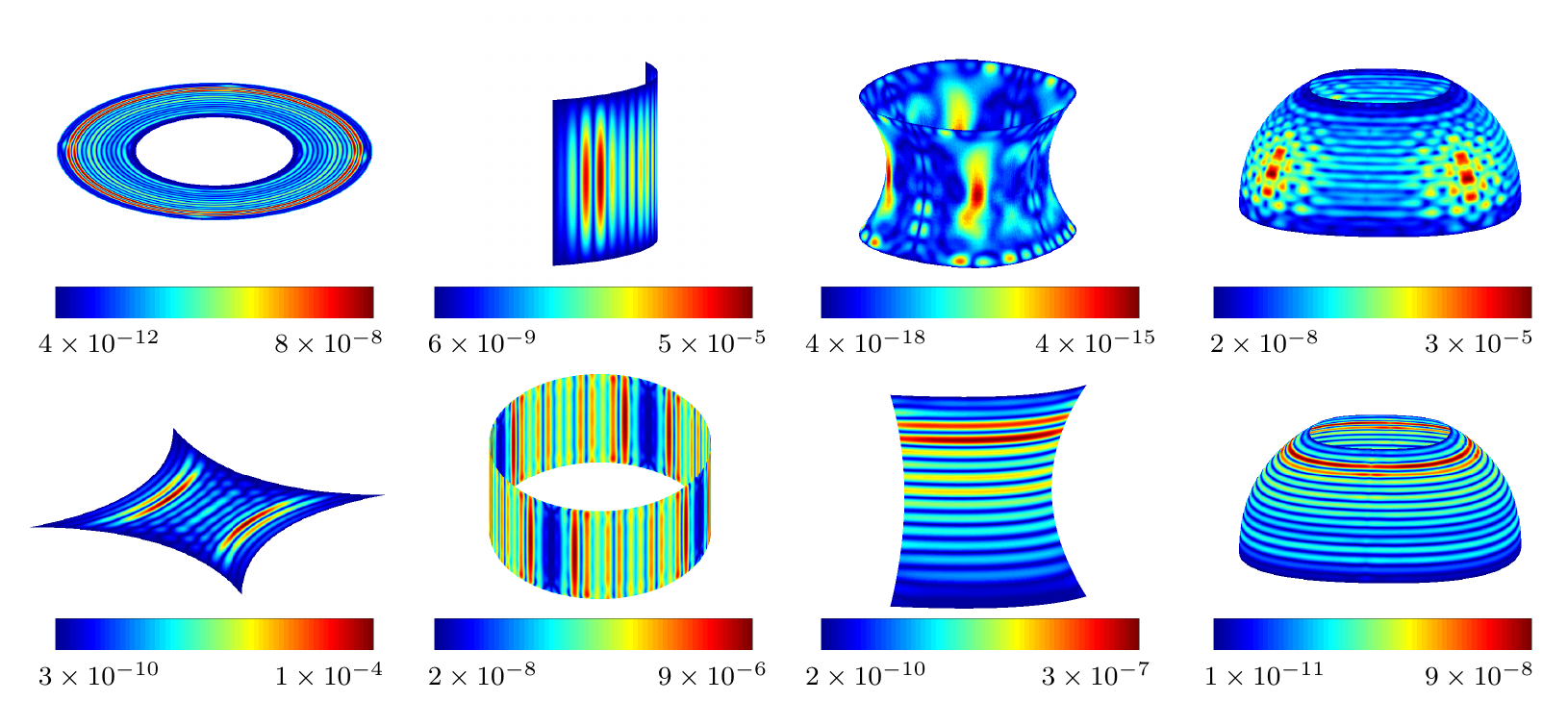}
    \caption{Relative Displacement Errors}
  \end{subfigure}
  \\
  \begin{subfigure}[b]{0.99\textwidth}
    \centering
    \includegraphics{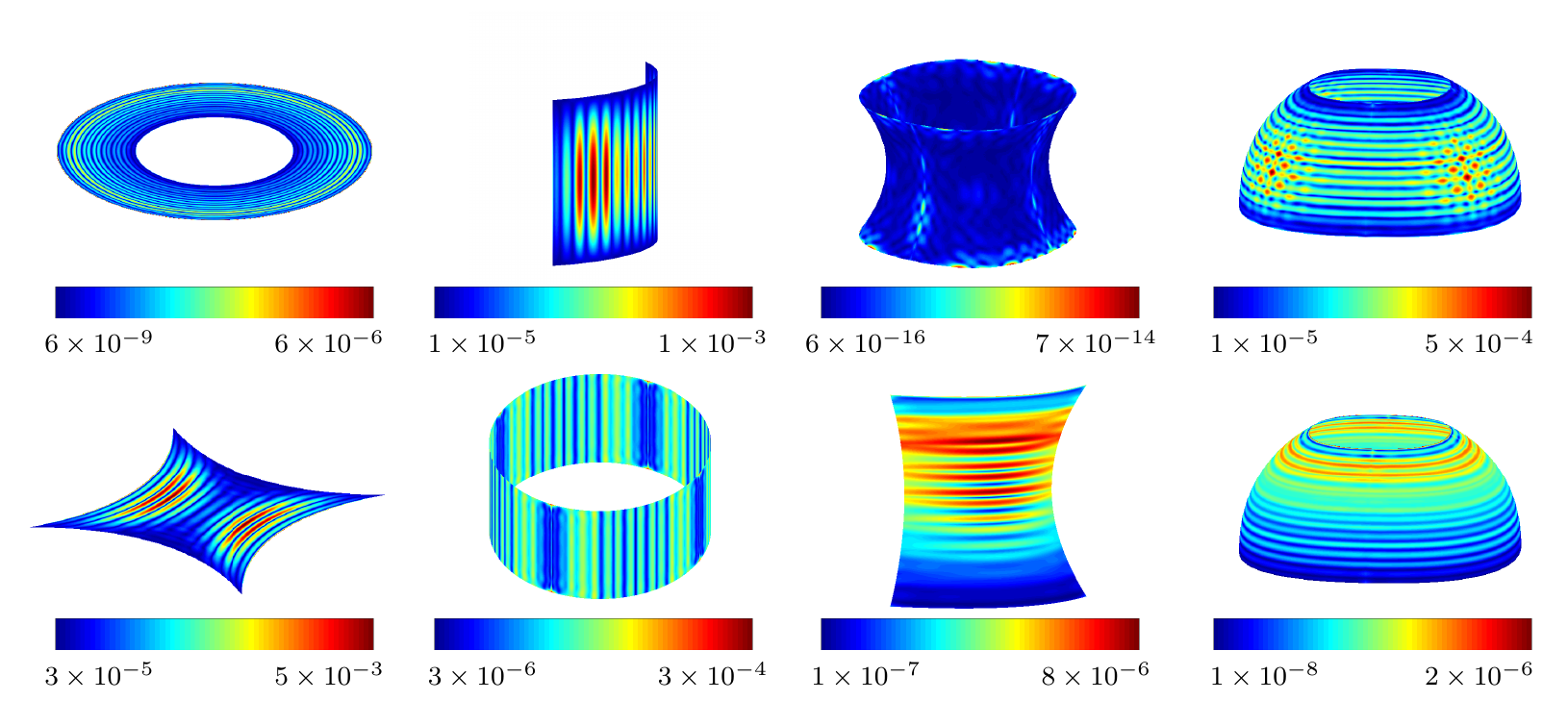}
    \caption{Relative Shell Energy Errors}
  \end{subfigure}
\caption{(a) The relative displacement errors and (b) the relative shell energy errors visualized over the undeformed geometries for our new shell obstacle course for a $p=4$, $16 \times 16$-element mesh. Each column represents a geometric class of problems where, from left to right, we have flat, parabolic, hyperbolic, and elliptic geometries. From top to bottom, in the first column are the annular domain and the astroid, in the second column are the quarter cylinder and the full cylinder, in the third column are the inflated hyperbolic paraboloid and the hyperbolic paraboloid diving board, and in the fourth column are the inflated hemispherical shell and the stretched hemisphere.}
\label{fig:Disp_and_energy_Errs}
\end{figure}

\subsection{Displacement and Energy Errors}

Another benefit of our new shell obstacle manufactured solutions suite is the capability to visualize the pointwise displacement and energy density errors throughout the geometric domain. This ability is exceptionally useful for understanding various discretization methods and how errors accrue accordingly. To illustrate this, the displacement errors and energy errors are plotted in Figure~\ref{fig:Disp_and_energy_Errs}. As is shown in these figures, the error is quite oscillatory throughout the domain, but the amplitude of the oscillations is bounded on the order of the discretization error.

\subsection{Comparison to Variationally Inconsistent Nitsche-Based Formulation}

To convey the importance of variational consistency, we have included an additional set of numerical experiments in this subsection. In these experiments, we compare our formulation and discretization to one using the incorrect ersatz forces that were described in Remark~\ref{rem:IncorrectErsatz} and, more specifically, from those presented in \cite[p.155]{Ciarlet2005} and in \cite[p.156]{Koiter1973foundations}. The resulting Nitsche-based formulation is identical in form to the formulations proposed in \cite{guo2015weak,guo2015nitsche} up to corner forces. The results of this experiment are shown in Figure~\ref{fig:bad_results}. To reiterate the differences between these results and those in previous subsections, we employ the incorrect bending component of the ersatz force $\surfVec{\ersatz}^{(\bendStress)}({\bf w}) = - 2 \surfTens{\SFF} \cdot \surfTens{\bendStress}({\bf w}) \cdot \surfVec{\bdyNormal}$ instead of our derived forces $\surfVec{\ersatz}^{(\bendStress)}({\bf w}) = - \surfTens{\SFF} \cdot \left( \surfTens{\bendStress}({\bf w}) \cdot \surfVec{\bdyNormal} + \surfVec{\bdyTangent} \moment_{nt}({\bf w}) \right)$. Asymptotic convergence rates of roughly $0.5$ and $1.5$ are observed in the energy norm and the standard $L^2$-norm, respectively, regardless of polynomial degree. The deteriorated convergence rates are due to the inconsistency of the underlying formulation that renders the effectiveness of the formulation no better than a classical penalty method. In fact, the convergence rates presented in Figure~\ref{fig:bad_results} agree with theoretically-expected convergence rates from such a penalty formulation \cite[Thm. 5.2]{graser2019discretization}. Note the reference lines and slope parameters are computed using only the tail of the data where the arrested rates begin to highlight this observation.

\begin{figure}[t!]
  \includegraphics{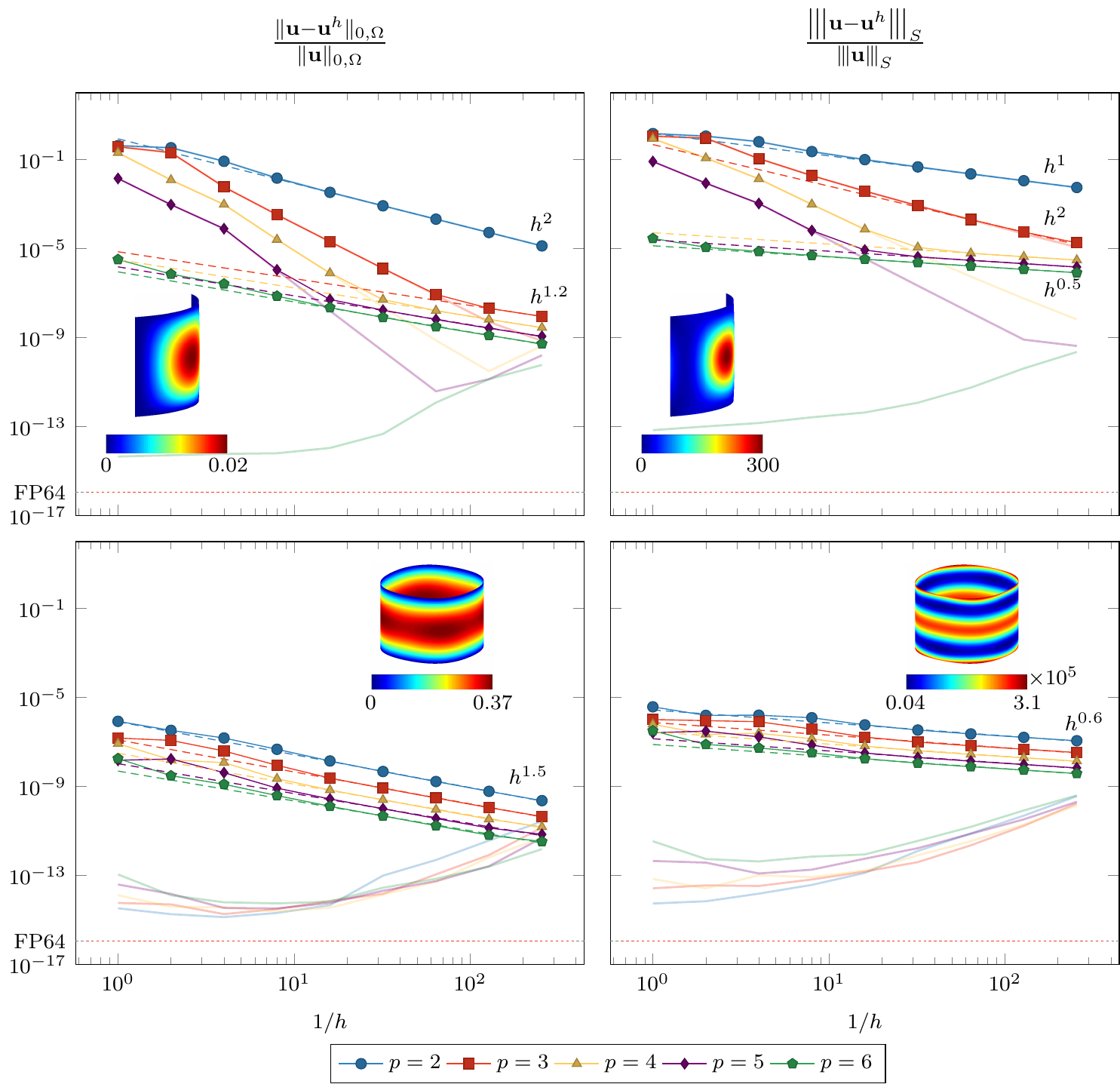}
  \caption{The convergence behavior of two selected problems from our linear shell obstacle course using the variationally inconsistent Nitsche-based formulation. The results for Problem 3 are shown in the $L^2$-norm (top left) and energy norm (top right) and those for Problem 5 are shown in the $L^2$-norm (bottom left) and energy norm (bottom right). The results from our discretization are shown transparently in the background for reference. The convergence behavior of the incorrect discretization are shown by the lines with filled markers. The dashed lines of matching color are a linear fit of the tail-end of the data.}
  \label{fig:bad_results}
\end{figure}

The variationally inconsistent formulation results in arrested convergence rates after a certain critical mesh size where the boundary error dominates the total error. This is especially clear for the Problem 3 results in Figure~\ref{fig:bad_results} where optimal convergence rates are obtained initially until they are eventually inhibited. This observation is also readily seen in \cite[Fig.7]{guo2015weak} where initial, optimal convergence rates begin to taper off at the last available data point. The Problem 5 results in Figure~\ref{fig:bad_results} clearly illustrate the underlying formulation is inconsistent because the manufactured solution lies in the span of each discretization and yet the error is not at machine precision. This is also the case for the $p=6$ discretization for Problem 3.


\section{Conclusion}
\label{sec:conclusion}
In this paper, we have presented a new Nitsche-based formulation for the linear Kirchhoff-Love shell that is provably stable and optimally convergent for general sets of admissible boundary conditions.  To arrive at our formulation, we first presented a systematic framework for constructing Nitsche-based formulations for variational constrained minimization problems.  We proved that this framework yields a well-posed and convergent Nitsche-based formulation provided that a generalized Green's identity and generalized trace and Cauchy-Schwarz inequalities are available.  We then applied this framework to the linear Kirchhoff-Love shell and, for the particular case of NURBS-based isogeometric analysis, we proved that the resulting formulating Nitsche-based formulation yields optimal convergence rates in both the shell energy norm and the standard $L^2$-norm.  To arrive at this formulation, we derived the Euler-Lagrange equations for general sets of admissible boundary conditions, and we discovered that the equations typically presented in the literature are incorrect.  To verify our Nitsche-based formulation, we constructed a linear shell obstacle course encompassing flat, parabolic, hyperbolic, and elliptic geometric configurations subject to clamped, simply supported, symmetric, and free boundary conditions.  For all examples, we used NURBS to discretize the governing equations, and we demonstrated that optimal convergence rates are obtained in both the shell energy norm and the standard $L^2$-norm for polynomial degrees $p = 2$ through $p = 6$.  We also demonstrated that a variationally inconsistent Nitsche-based formulation based on the incorrect Euler-Lagrange equations typically presented in the literature yields sub-optimal convergence rates of 0.5 and 1.5 in the shell energy norm and standard $L^2$-norm, respectively.

As discussed in Section~\ref{sec:num_results}, it is necessary to manufacture forcing functions that yield known displacement fields in order to confirm optimal convergence rates. This process is extremely non-trivial due to the inherent complexity of the PDE governing the Kirchhoff-Love shell. Historically, shell discretizations have been verified through ad hoc and unreliable means such as pointwise measures of convergence to values that are not backed by theory. Although these methods may show that a given discretization ultimately approaches an agreed-upon value, it has no notion of the rates at which it converges, nor the rigor associated with error estimates in standard Sobolev norms. To enable future researchers to rigorously validate their results, we have released the eight problems in our extended shell obstacle course in a supplemental notebook file. To the best of our knowledge, there does not otherwise exist a comprehensive suite of validation problems for shells that (i) encompasses all possible geometric classifications, (ii) considers all admissible boundary condition configurations, and (iii) serves as a tool for confirming optimal convergence behaviors. We therefore believe our new linear shell obstacle course stands as a valuable contribution to the research community on its own.

Since our framework for constructing Nitsche-based formulations is general and applicable to more complex problems, we plan to extend our methodology to Kirchhoff-Love shells with both geometric and material nonlinearities. To this end, we plan to extend our new shell obstacle course to this setting as well to serve as another validation tool for the shell community. We also plan to explore alternative discretization strategies and, in particular, Catmull-Clark subdivision spline discretizations with extraordinary vertices.  Finally, we plan to extend our methodology to the weak enforcement of displacement and normal rotation along patch interfaces for non-conforming multi-patch NURBS geometries and along trimming curves for trimmed NURBS geometries.

\appendix


\section{Differential Geometry}
\label{sec:Appendix_Diff_Geo}

For clarity, these appendices outline the various components needed for implementation of the methods developed in this paper. We begin with an outline of the differential geometry concepts required for such an exposition. In the following, we employ Einstein notation, i.e., repeated high-low indices have an induced summation. Latin indices (e.g., $i,j,k$) take values 1, 2, and 3, while Greek indices (e.g., $\alpha,\beta,\lambda$) take values 1 and 2. A comma preceding one or more indices denotes derivatives with respect to those indices.


Let $\physDomain$ denote the midsurface of our physical domain, an arbitrary differentiable manifold immersed in $\R^3$ with Lipschitz-continuous boundary $\partial \physDomain$. Accordingly, we define a sufficiently smooth geometric mapping $\undef{\midsurfMap}: \parDomain \rightarrow \physDomain$. Through derivatives of this geometric mapping, we define in-plane covariant tangent vectors along the convective parametric coordinates
\begin{equation*}
\aVec_\alpha(\pVariable^1,\pVariable^2) = \undef{\midsurfMap}_{,\alpha}(\pVariable^1,\pVariable^2).
\end{equation*}
We henceforth drop the explicit dependence on $\pVariable^1$ and $\pVariable^2$ for notational ease. The covariant metric coefficients are then given by
\begin{equation}
\aMet_{\alpha\beta} = \aVec_\alpha \cdot \aVec_\beta.
\label{eqn:covMetCoeff}
\end{equation}
Note that the metric tensor is symmetric, since the Euclidean inner product is symmetric (i.e., $\aMet_{\alpha\beta} = \aMet_{\beta\alpha}$).

We construct the algebraically dual, contravariant vector basis through the satisfaction of the Kronecker delta relationship
\begin{equation*}
\aVec^\alpha \cdot \aVec_\beta = \delta^\alpha_\beta,
\end{equation*}
elucidating the fact that the covariant and contravariant metric coefficients are related through their collective inverses, i.e.,
\begin{equation}
\left[ \aMet^{\alpha\beta} \right] = \left[ \aMet_{\alpha\beta} \right]^{-1},
\label{eqn:conMetCoeff}
\end{equation}
where $\left[ \cdot \right]$ denotes the matrix of components. Accordingly, the contravariant vector basis is defined via
\begin{equation*}
\aVec^\alpha = \aMet^{\alpha\mu} \aVec_\mu.
\end{equation*}
In general, the metric coefficients permit the ``raising'' and ``lowering'' of tensor component indices.

The in-plane covariant vectors define a basis for the tangent bundle of the manifold, and through the vector cross product, we can define a surface normal director:
\begin{equation*}
\aVec_3 = \frac{\aVec_1 \times \aVec_2}{\fullNorm{\aVec_1 \times \aVec_2 }_2},
\end{equation*}
where in this instance, $\fullNorm{\cdot}_2$ denotes the standard Euclidean norm. Note, by this definition, that (i) the normal director is always orthogonal to the in-plane vectors (i.e., $\aMet_{3\alpha} = \aMet_{\alpha3} = 0$) and (ii) the normal director always has unit length (i.e., $\aMet_{33} = 1$). Consequently, the covariant and contravariant midsurface normal directors are identical (i.e., $\aVec_3 = \aVec^3$). The derivatives of the in-plane vectors are given by higher-ordered derivatives of the geometric mapping:
\begin{equation*}
\aVec_{\alpha,\,\beta} = \undef{\midsurfMap}_{,\alpha\beta}  \hspace{20pt} \text{and} \hspace{20pt} \aVec_{\alpha,\,\beta\lambda}= \undef{\midsurfMap}_{,\alpha\beta\lambda}.
\end{equation*}
For our purposes, we assume that these derivatives are symmetric due to an assumed sufficient differentiability of $\undef{\midsurfMap}$. Additionally, $\aVec_{3,3} = 0$ by the inextensibility of the normal director.

The covariant components of the curvature tensor are given by
\begin{equation*}
\bCurv_{\alpha\beta} = \aVec_3 \cdot \aVec_{\alpha,\,\beta}
\end{equation*}
and the covariant components of the third fundamental form are given by the composition
\begin{equation*}
\cCurv_{\alpha\beta} = \bCurv^\lambda_{\phantom{\lambda} \alpha} \bCurv_{\lambda\beta},
\end{equation*}
where the mixed components of the curvature tensor arise through the relationship
\begin{equation}
  \bCurv^\alpha_{\phantom{\alpha} \beta} = \aMet^{\alpha\lambda}\bCurv_{\lambda\beta}.
  \label{eqn:secondFF}
\end{equation}
Both the curvature tensor and the third fundamental form are symmetric.  The \textbf{\emph{Christoffel symbols of the second kind}} are defined by
\begin{equation}
\christoffel^\lambda_{\alpha\beta} = \aVec^\lambda \cdot \aVec_{\alpha,\,\beta}.
\label{eqn:Christoffel}
\end{equation}
Note that these entities exhibit a symmetry between their lower indices, i.e., $\christoffel^\lambda_{\alpha\beta} = \christoffel^\lambda_{\beta\alpha}$. The components of the ersatz forces require derivatives of the Christoffel symbols, which are given via
\begin{equation}
  \Gamma^\lambda_{\alpha\beta,\mu} = -\christoffel^\lambda_{\nu\mu}\christoffel^\nu_{\alpha\beta} + b_{\alpha\beta} b^\lambda_\mu + \aVec^\lambda \cdot \aVec_{\alpha,\,\beta\mu}.
  \label{eqn:dChristoffel}
\end{equation}

Let $\surfVec{\bdyNormal}$ be the outward-facing unit normal and $\surfVec{\bdyTangent}$ be the positively oriented, counterclockwise unit tangent vector to $\physBoundary$. Note that $\surfVec{\bdyNormal}$ is the normal to $\physBoundary$ and should not be confused with the midsurface normal director, denoted $\aVec_3$, which coincidentally also dictates the positive orientation of $\surfVec{\bdyTangent}$. These boundary quantities are defined via the non-normalized normal and tangent vectors denoted $\surfVec{\nu}$ and $\surfVec{s}$, respectively.

Beginning with the non-normalized tangent vector $\surfVec{s}$, we first select $s^\alpha$ in any manner such that $\surfVec{s} = s^\alpha \aVec_\alpha$ is aligned with the domain boundary. For convenient parameterizations, such as those isomorphic to rectangular parametric domains, these components can be selected to align with the parametric edges, e.g., $\left[ s^\alpha \right] = (1,0)$ for the south boundary, etc.; however, our exposition here is not limited to such parameterizations. The corresponding covariant components of this vector are given by $s_\alpha = \aMet_{\alpha\beta}s^\beta$ and the boundary Jacobian used for integration is given by $\| \surfVec{s} \| = \sqrt{s^\lambda s_\lambda}$. The covariant and contravariant components of the \textbf{\emph{normalized}} boundary tangent vector are then given by
\begin{equation}
  t_\alpha = \frac{s_\alpha}{\sqrt{s^\lambda s_\lambda}} \hspace{10pt} \text{and} \hspace{10pt} t^\alpha = \frac{s^\alpha}{\sqrt{s^\lambda s_\lambda}},
  \label{eqn:tan_comps}
\end{equation}
respectively. Since we have assumed that the contravariant components $s^\alpha$ are known \textit{a priori}, we accordingly expect that the coordinate derivatives of this field $s^\alpha_{,\,\beta}$ and, hence, the covariant derivative $s^\alpha_{|\beta} = s^\alpha_{,\,\beta} + \Gamma^\alpha_{\lambda\beta} s^\lambda$ are also known. The covariant derivative of the covariant components of the non-normalized tangent vector are then given via an ``index-lowering'' operation of the contravariant counterpart:
\begin{equation*}
  s_{\alpha|\beta} = \aMet_{\alpha\lambda} s^\lambda_{|\beta}.
\end{equation*}
Lastly, the covariant derivative of the covariant components of the unit tangent vector to the boundary are given by
\begin{equation*}
  t_{\alpha|\beta} = \frac{s_{\alpha|\beta}}{\sqrt{s^\lambda s_\lambda}} - \frac{t_\alpha \left( t^\lambda s_{\lambda|\beta} \right)}{\sqrt{s^\lambda s_\lambda}}.
\end{equation*}

Next, we discuss the components of and various derivatives of components of the normal vector. The normal vector is defined to be (i) outward-facing and (ii) normal to the boundary. The normal vector defined through the vector cross product $\surfVec{n} = \surfVec{t} \times \aVec_3$ can be shown to satisfy these properties. The covariant components of the normal vector are given via
\begin{equation}
  \left( \begin{array}{c}
  n_1\\
  n_2
\end{array} \right) = |\aMet| \left( \begin{array}{c}
  t^2\\
  -t^1
  \end{array} \right),
  \label{eqn:d_nor_comp}
\end{equation}
where $|\aMet| = \det(a_{\alpha\beta}) = \sqrt{\aMet_{11} \aMet_{22} - 2\aMet_{12}}$. The contravariant components of the normal vector are then given by the ``index-raising'' operation $n^\alpha = \aMet^{\alpha\beta} n_\beta$. The derivative of the covariant components of the normal are given by
\begin{equation*}
  \left( \begin{array}{c}
  n_{1,\,\beta}\\
  n_{2,\,\beta}
\end{array} \right) = |\aMet| \left( \begin{array}{c}
    t^2_{,\,\beta}\\
    -t^1_{,\,\beta}
  \end{array} \right) + |\aMet|_{,\,\beta} \left( \begin{array}{c}
      t^2\\
      -t^1
      \end{array} \right),
\end{equation*}
where
\begin{equation*}
  |\aMet|_{,\,\beta} = \christoffel^\lambda_{\lambda\beta}|\aMet|,
\end{equation*}
and the coordinate derivatives of the tangent vector used in \eqref{eqn:d_nor_comp} can be obtained by utilizing the relationship $t^\alpha_{,\,\beta} = t_{\alpha|\beta} + \christoffel^\lambda_{\alpha\beta} t_\lambda$. Finally, the covariant derivative of the covariant components of the normal is given by
\begin{equation*}
  n_{\alpha|\beta} = n_{\alpha,\beta} - \christoffel^\lambda_{\alpha\beta}n_\lambda.
\end{equation*}


\section{Continuum Mechanics}
\label{sec:Appendix_Cont_Mech}

In this appendix, we provide a brief discussion of continuum mechanics, particularly the stress and strain measures employed throughout this paper. We use \textbf{\emph{upper-case}} letters to denote entities defined over the shell body, e.g., ${\bf U} \colon \mathcal{B} \rightarrow \mathbb{R}^3$, and \textbf{\emph{lower-case}} letters to denote entities defined over the shell midsurface, e.g., ${\bf u} \colon \physDomain \rightarrow \mathbb{R}^3$. Furthermore, calligraphic fonts refer to entities pertaining to a \textbf{\emph{deformed configuration}} in contrast to all else that refers to the \textbf{\emph{undeformed configuration}}.

The differential-geometric discussion in~\ref{sec:Appendix_Diff_Geo} pertains to the two-dimensional ``midsurface'' of the shell body. However, when considering strain measurements of a three-dimensional body $\undef{\shellBody}$, we accordingly need a parameterization in which to perform analysis. Utilizing the $\aVec{}$-frame presented therein,
\begin{equation*}
\undef{\shellMap}{(\pVariable^1,\pVariable^2,\pVariable^3)} = \undef{\midsurfMap}(\pVariable^1,\pVariable^2) + \pVariable^3 \aVec_3(\pVariable^1,\pVariable^2)
\end{equation*}
suffices as a suitable parameterization for the shell body. Here, $\pVariable^1,\pVariable^2$ are the in-plane convective coordinates and $\pVariable^3 \in \left[ -\sfrac{\thickness}{2}, \sfrac{\thickness}{2} \right]$ denotes a ``thickness'' direction of the shell, oriented along $\aVec_3$. Analogously to the midsurface, we define a curvilinear frame for $\undef{\shellBody}$ that we utilize for the computation of strain tensors. These vectors are given by
\begin{equation*}
\undef{\gVec}_\alpha = \undef{\shellMap}_{,\alpha} = \left( \delta_\alpha^\beta - \pVariable^3 \bCurv^\beta_\alpha \right) \aVec_\beta \hspace{20pt} \text{and} \hspace{20pt} \undef{\gVec}_3 = \undef{\shellMap}_{,3} = \aVec_3.
\end{equation*}
Observe, in the absence of curvature (e.g., $\surfTens{\bCurv} = \surfTens{0}$), that the $\fullTens{\aMet}$ and $\undef{\gVec}$ coordinate frames coincide throughout the shell body.

Continuum mechanics is grounded in elasticity theory, where strain measures, constitutive models, and Newton's $2^{nd}$ law serve as a surrogate to the displacement field. In particular, external loadings determine internal stresses that, through a constitutive relationship, expose an induced straining in the elastic body that is defined through various derivatives of the displacement field. A wide range of strain measures exist; however, for the purposes of this paper, we employ the linearized Green-Lagrange strain measure because it is energetically conjugate to the $2^{nd}$ Kirchhoff-Piola stress tensor. This provides a natural foundation in variational formulations that seek the displacement configuration that minimizes the potential energy.

We assume there exists a smooth, bijective mapping $\mathcal{X}(\bm{\pVariable}) = \mathcal{X}(\undef{\shellMap}(\bm{\pVariable}))$ between $\mathcal{X}$, the deformed configuration, and $\undef{\shellMap}$, the undeformed configuration. Through the derivatives of this mapping, we obtain the entity known as the \textbf{\emph{deformation gradient}}, which is given by
\begin{equation}
{\bf F} \equiv \frac{\partial \mathcal{X}}{\partial \undef{\shellMap}} = \frac{\partial \mathcal{X}}{\partial \pVariable^i} \frac{\partial \pVariable^i}{\partial \undef{\shellMap}} = \bm{\mathcal{G}}_i \otimes \undef{\gVec}^i.
\label{eqn:DefGrad}
\end{equation}
The Green-Lagrange strain measure is given by
\begin{equation*}
{\bf E} = \frac{1}{2}\left( {\bf R} - {\bf I} \right) = \frac{1}{2}\left( {\bf F}^T{\bf F} - {\bf I} \right),
\label{eqn:GLStrain}
\end{equation*}
where ${\bf R} = {\bf F}^T{\bf F}$ is the \textbf{\emph{Right Cauchy strain tensor}}. Utilizing \eqref{eqn:DefGrad}, we obtain the following expression:
\begin{equation}
{\bf E} = \frac{1}{2}\left( (\undef{\gVec}^i \otimes \bm{\mathcal{G}}_i) \cdot (\bm{\mathcal{G}}_{j} \otimes \undef{\gVec}^{j}) - {\bf I} \right) = \frac{1}{2}\left( \mathcal{G}_{ij} - G_{ij} \right) \undef{\gVec}^{i} \otimes \undef{\gVec}^{j} \equiv E_{ij} \undef{\gVec}^{i} \otimes \undef{\gVec}^{j},
\label{eqn:GLStrain2}
\end{equation}
where $\mathcal{G}_{ij}$ and $G_{ij}$ are the covariant metric coefficients associated with the deformed and undeformed shell body, respectively. Observe that the coefficients $E_{ij}$ are in fact a function of the displacement field since
\begin{equation*}
\bm{\mathcal{G}}_{i} = \frac{\partial \mathcal{X}}{\partial \pVariable^i} = \frac{\partial \left( \undef{\shellMap} + \fullTens{\shellDisp} \right)}{\partial \pVariable^i} = \undef{\gVec}_{i} +  \fullTens{\shellDisp}_{,i}.
\end{equation*}
In light of this, the deformed metric tensor is given via
\begin{equation*}
\mathcal{G}_{ij} = \bm{\mathcal{G}}_{i} \cdot \bm{\mathcal{G}}_{j} = ( \gVec_{i} + \fullTens{\shellDisp}_{,i} ) \cdot ( \gVec_{j} + \fullTens{\shellDisp}_{,j} ) = G_{ij} + \fullTens{\shellDisp}_{,i} \cdot \undef{\gVec}_{j} + \fullTens{\shellDisp}_{,j} \cdot \undef{\gVec}_{i} + \fullTens{\shellDisp}_{,i} \cdot \fullTens{\shellDisp}_{,j}.
\end{equation*}
Combining this expression with \eqref{eqn:GLStrain2} yields the following components of the Green-Lagrange strain tensor:
\begin{equation*}
E_{ij}(\fullTens{\shellDisp}) = \frac{1}{2} \left( \fullTens{\shellDisp}_{,i} \cdot \undef{\gVec}_{j} + \fullTens{\shellDisp}_{,j} \cdot \undef{\gVec}_{i} + \fullTens{\shellDisp}_{,i} \cdot \fullTens{\shellDisp}_{,j} \right).
\end{equation*}
Neglecting the nonlinear dependencies, the linearized Green-Lagrange strain tensor coefficients reduce to
\begin{equation}
\GLStrain_{ij}(\fullTens{\shellDisp}) = \frac{1}{2} \left( \fullTens{\shellDisp}_{,i} \cdot \undef{\gVec}_{j} + \fullTens{\shellDisp}_{,j} \cdot \undef{\gVec}_{i} \right),
\label{eqn:GL_Coeff}
\end{equation}
arriving at the familiar symmetrized displacement gradient that governs linear elastic phenomena. Next, we define for vector fields ${\bf w}: \physDomain \rightarrow \mathbb{R}^3$ the \textbf{\emph{surface gradient operator}} \cite[\S 95]{Brand1947} to be
\begin{equation}
 \surfVec{\nabla} {\bf w} = \frac{\partial {\bf w}}{\partial \xi^\lambda} \otimes \aVec^\lambda.
 \label{eqn:surfGradDef}
\end{equation}
The surface gradient is ubiquitous throughout our exposition because it is the primary tool used for constructing strain fields over manifolds.  It should be noted that, for vector fields ${\bf W}: \mathcal{B} \rightarrow \mathbb{R}^3$, the surface gradient of the vector field along the midsurface is precisely
\begin{equation*}
 \surfVec{\nabla} {\bf W} = \nabla {\bf W} \cdot \surfTens{\Projector} = \nabla {\bf W} - \frac{\partial {\bf W}}{\partial \xi^3} \otimes \gVec^3,
\end{equation*}
where $\nabla {\bf W} = \frac{\partial {\bf W}}{\partial \xi^i} \otimes \gVec^i$ is the \textit{\textbf{full gradient}} of ${\bf W}$ and $\surfTens{\Projector} = {\bf I} - \aVec^3 \otimes \aVec_3$ is the \textit{\textbf{in-plane projector}}, with $\textbf{I}$ denoting the identity tensor. Note by the symmetry of $\surfTens{\Projector}$ and the construction of the midsurface normal director, i.e., $\aVec_3 = \aVec^3$, the in-plane projector also satisfies the definition $\surfTens{\Projector} = {\bf I} - \aVec_3 \otimes \aVec^3$. Akin to the surface gradient is the \textbf{\emph{surface divergence}} operator, which is defined as
\begin{equation*}
 \surfVec{\nabla} \cdot {\bf w} = \frac{\partial {\bf w}}{\partial \xi^\lambda} \cdot \aVec^\lambda.
\end{equation*}

The Kirchhoff-Love shell model considered in this paper employs the following displacement profile:
\begin{equation}
  \fullTens{\shellDisp}(\pVariable^1,\pVariable^2,\pVariable^3) = \fullTens{\midsurfDispTrial}(\pVariable^1,\pVariable^2) + \pVariable^3 \surfVec{\midsurfRot}(\pVariable^1,\pVariable^2)
\label{eqn:disp_profiles}
\end{equation}
Physically, $\fullTens{\midsurfDispTrial}(\pVariable^1,\pVariable^2)$ is a translational displacement, while $\surfVec{\midsurfRot}(\pVariable^1,\pVariable^2)$ is a rotational displacement of the midsurface normal director $\aVec^3$. It is worth mentioning that the rotational displacement $\surfVec{\midsurfRot}$ can be represented by a surface tensor due to the inextensibility of the midsurface normal director $\aVec^{3}$.

We now proceed with deriving the corresponding strain measures from these prescribed displacement profiles. In particular, we substitute the assumed displacement field \eqref{eqn:disp_profiles} into \eqref{eqn:GL_Coeff}, giving rise to Table \ref{table:VariousStrains} that shows the explicit decomposition of the strain fields into a membrane and bending strain mode as well as a transverse shear strain mode.

\begin{table}[h]
\centering
\begin{tabular}{rc}
membrane and bending straining & $\Sym\left(\surfVec{\nabla} \fullTens{\midsurfDispTrial} + \pVariable^3 \left( \surfVec{\nabla} \ \surfVec{\midsurfRot} - \surfTens{\bCurv} \cdot \surfVec{\nabla} \fullTens{\midsurfDispTrial} \right)\right)$ \\
transverse shearing & $\frac{1}{2} \left( \aVec_{3} \cdot \surfVec{\nabla} \fullTens{\midsurfDispTrial} + \surfVec{\midsurfRot} \right)$
\end{tabular}
\caption{The strain fields that arise from the linearized Green-Lagrange strain with the assumed Kirchhoff-Love shell displacement profile. Note we have dropped the $\mathcal{O} \left( \left( \pVariable^3 \right)^2 \right)$ term that contained $\Sym(\surfTens{\bCurv} \cdot \surfVec{\nabla} \surfVec{\midsurfRot})$ due to our ansatz of a \textbf{\emph{straight}} deformed midsurface normal director.}
\label{table:VariousStrains}
\end{table}

In the table, the operator $\Sym(\cdot)$ returns the symmetric part of a tensor.  That is, $\Sym(\textbf{A}) = \frac{1}{2} \left( A_{ij} + A_{ji} \right) \aVec^i \otimes \aVec^j$ for a tensor $\textbf{A} = A_{ij} \aVec^i \otimes \aVec^j$.

However, we are unfinished with the derivation of the strains. The Kirchhoff-Love kinematical assumption is that material normals remain straight and normal to the deformed material, i.e., the transverse shear strain vanishes. This introduces a constraint on the rotational degrees of freedom as follows:
\begin{equation}
  \surfVec{\midsurfRot}({\bf u}) = - \aVec_{3} \cdot \surfVec{\nabla} \fullTens{\midsurfDispTrial}.
\label{eqn:KL_constraint}
\end{equation}
Substituting this constraint back into the strain profile and invoking the additional assumption that the strain fields are purely in-plane yields the following strain fields for respective membrane and bending actions:
\begin{equation}
  \surfTens{\memStrain}(\fullTens{u}) = \surfTens{\Projector} \cdot \Sym\left(\surfVec{\nabla} \fullTens{\midsurfDispTrial}\right) \cdot \surfTens{\Projector}
\label{eqn:mem_strain}
\end{equation}
and
\begin{equation}
  \surfTens{\bendStrain}(\fullTens{u}) = - \surfTens{\Projector} \cdot \Sym\left(\aVec_{3} \cdot \surfVec{\nabla} \ \surfVec{\nabla} \fullTens{\midsurfDispTrial}\right) \cdot \surfTens{\Projector}.
\label{eqn:bend_strain}
\end{equation}
Lastly, it follows that we can decompose the linearized Green-Lagrange strain tensor into membrane- and bending-components:
\begin{equation}
\surfTens{\GLStrain}(\fullTens{\midsurfDispTrial}) = \surfTens{\memStrain}(\fullTens{\midsurfDispTrial}) + \pVariable^3 \surfTens{\bendStrain}(\fullTens{\midsurfDispTrial}).
\label{eqn:strain_decomposition}
\end{equation}

At this point, it is worthwhile to define the energetically conjugate stresses for both the membrane and bending strain tensors. Since both of these quantities require a material law in their definition, we present the following elasticity tensor arising from a linear constitutive model that we employ in our mathematics and numerics:
\begin{equation}
\mathbb{\constitutive} = \mathbb{\constitutive}^{\alpha\beta\lambda\mu} \aVec_{\alpha} \otimes \aVec_{\beta} \otimes \aVec_{\lambda} \otimes \aVec_{\mu} \ \ \text{where} \ \ \mathbb{\constitutive}^{\alpha\beta\lambda\mu} = \frac{E}{2(1+\nu)}\left( \aMet^{\alpha\lambda}\aMet^{\beta\mu} + \aMet^{\alpha\mu}\aMet^{\beta\lambda} + \frac{2\nu}{1-\nu}\aMet^{\alpha\beta}\aMet^{\lambda\mu} \right),
\label{eqn:constitutive}
\end{equation}
where $E$ is Young's modulus and $0 \le \nu \le \frac{1}{2}$ is Poisson's ratio.

This choice of constitutive law employs a ``plane stress'' assumption where transverse stresses are neglected, precisely the regime of interest and compatible with our model construction. Moreover, this choice of $\mathbb{\constitutive}$ is an approximation of the true  energetically conjugate elasticity tensor that is comprised of $G^{\alpha\beta}$, that is, the contravariant metric coefficients of the shell body, rather than $a^{\alpha\beta}$. This approximation is valid for sufficiently thin shells with gentle curvatures \cite{Ciarlet2000}. The primary goals of utilizing this assumption in our constitutive model is to induce a natural decoupling between the membrane and bending stresses, much like the the membrane and bending strains, and to allow for a simple integration through-thickness before discretization.

Through the tensor relationship $\surfTens{\sigma}(\fullTens{\midsurfDispTrial}) = \mathbb{\constitutive} \colon \surfTens{\GLStrain}(\fullTens{\midsurfDispTrial})$ between \eqref{eqn:constitutive} and \eqref{eqn:strain_decomposition}, we have the following decomposition of the Cauchy stress tensor:
\begin{equation*}
\surfTens{\sigma}(\fullTens{\midsurfDispTrial}) = \mathbb{\constitutive} \colon \surfTens{\memStrain}(\fullTens{\midsurfDispTrial}) + \pVariable^3 \mathbb{\constitutive} \colon \surfTens{\bendStrain}(\fullTens{\midsurfDispTrial}).
\label{eqn:stress_decomposition}
\end{equation*}
These entities give rise to the familiar relationship that is the strain energy that we have utilized throughout our example problems:
\begin{equation}
\Pi_\text{int}({\bf u}) = \frac{1}{2} \int_{\undef{\shellBody}} \surfTens{\sigma}\left(\fullTens{\midsurfDispTrial} \right) \colon \surfTens{\GLStrain}\left(\fullTens{\midsurfDispTrial}\right) \ d \undef{\shellBody} = \underbrace{ \frac{1}{2} \int_{\physDomain} \surfTens{\memStress}(\fullTens{\midsurfDispTrial}) \colon \surfTens{\memStrain}(\fullTens{\midsurfDispTrial}) \ d \physDomain }_{\text{membrane energy}} + \underbrace{ \frac{1}{2} \int_{\physDomain} \surfTens{\bendStress}(\fullTens{\midsurfDispTrial}) \colon \surfTens{\bendStrain}(\fullTens{\midsurfDispTrial}) \ d \physDomain }_{\text{bending energy}},
\label{eqn:strain_energy}
\end{equation}
where the membrane stress and bending stresses are given by
\begin{equation}
\surfTens{\memStress}(\fullTens{\midsurfDispTrial}) := \thickness \mathbb{\constitutive} \colon \surfTens{\memStrain}(\fullTens{\midsurfDispTrial})
\label{eqn:mem_stress}
\end{equation}
and
\begin{equation}
 \surfTens{\bendStress}(\fullTens{\midsurfDispTrial}) := \frac{\thickness^3}{12} \mathbb{\constitutive} \colon \surfTens{\bendStrain}(\fullTens{\midsurfDispTrial}),
\label{eqn:bend_stress}
\end{equation}
respectively\footnote{Classically, these entities are defined without the thickness variable $\thickness$. However, we find it convenient to define the stresses this way to condense the notation throughout. Furthermore, the linear and cubic dependencies on thickness are fundamental properties associated with \textbf{\emph{membrane}} and \textbf{\emph{bending}} phenomena, respectively.}. Observe that the cross-integrals containing the bending strain against the membrane stress and the membrane strain against the bending stress in \eqref{eqn:strain_energy} vanish due to the integration through-thickness of a linear $\pVariable^3$. Moreover, the $\thickness$-dependencies in \eqref{eqn:mem_stress} and \eqref{eqn:bend_stress} appears after the through-thickness integration of the elastic body performed before discretization, since we have assumed a constant thickness throughout.


\section{Component Representations for the Kirchhoff-Love Shell}
\label{sec:Appendix_Components}

The exposition throughout this paper pertains solely to tensorial quantities representing various strain and stress fields. Since we ultimately desire to represent these quantities numerically, we need the components that make up these tensors for discretization. Before proceeding, we define the basis transformation matrices that are used for discretization of our structural models since our displacement field is discretized in a global Cartesian frame. The following transformations map Cartesian components to covariant, curvilinear ones:
\begin{equation*}
\covTrans^i_\alpha = \cartBasis^i \cdot \aVec_\alpha \hspace{15pt} \text{and} \hspace{15pt} \covTrans^i_3 = \cartBasis^i \cdot \aVec_3.
\label{eqn:lambdaTrans}
\end{equation*}
We also require the derivatives of this transformation when discretizing the membrane and bending strains. These derivatives are given by
\begin{equation*}
\covTrans^i_{\alpha,\lambda} = \christoffel^\mu_{\alpha\lambda} \covTrans^i_\mu + \SFF_{\alpha\lambda}\covTrans^i_3 \hspace{15pt} \text{and} \hspace{15pt} \covTrans^i_{3,\lambda} = -\SFF^\mu_{\cdot\lambda} \covTrans^i_\mu.
\label{eqn:dlamdaTrans}
\end{equation*}
For example, if $\fullTens{\midsurfDispTrial} = \midsurfDispTrial_i \cartBasis^i = \tilde{\midsurfDispTrial}_i \aVec^i$, then $\tilde{\midsurfDispTrial}_i = \covTrans_i^j \midsurfDispTrial_j$. Note we have also implicitly introduced the $\cdot$ notation for tensor indices that denote the index ordering associated with tensorial components.

With these transformation matrices in place, we are ready to present the tensorial components of the various entities required for successful discretization. Note that for a general shell discretization, we use a Cartesian displacement field of the form ${\bf u} = u_i {\bf e}^i$. Beginning with membrane quantities, the covariant components of the membrane strain tensor and the corresponding contravariant components of the membrane stress tensor for a given basis function and degree of freedom are expressed as follows:

\begin{table}[h]
\centering
\begin{tabular}{cc}
Membrane Strain & Membrane Stress\\
\midrule
$\memStrain_{\alpha\beta}({\bf u}) = \frac{1}{2} \left[ \covTrans^i_\alpha u_{i,\beta} + \covTrans^i_\beta u_{i,\alpha} \right]$ & $\memStress^{\alpha\beta}({\bf u}) = \mathbb{\constitutive}^{\alpha\beta\lambda\mu} \memStrain_{\lambda\mu}({\bf u})$
\end{tabular}
\end{table}

\noindent Moreover, the boundary traction is given by
\begin{equation*}
\traction^\alpha({\bf u}) = \memStress^{\alpha\beta}({\bf u}) \bdyNormal_\beta.
\end{equation*}

Next, we present the various quantities pertaining to the bending action of the shell.  The rotational degrees of freedom are expressed through the negative surface gradient of this field, i.e., $\surfVec{\theta}({\bf u}) = - \covTrans_3^i u_{i,\lambda} \aVec^\lambda$. Therefore, the normal and twisting rotations are given by

\begin{table}[h]
\centering
\begin{tabular}{cc}
Normal Rotation & Twisting Rotation\\
\midrule
$\theta_n({\bf u}) = - \covTrans_3^i u_{i,\lambda} n^\lambda$ & $\theta_t({\bf u}) = - \covTrans_3^i u_{i,\lambda} t^\lambda$
\end{tabular}
\end{table}

Through this definition, we are able to represent the bending stresses and strains in terms of our Cartesian displacement field through the following relationships:

\begin{table}[h]
\centering
\begin{tabular}{cc}
Bending Strain & Bending Stress\\
\midrule
$\bendStrain_{\alpha\beta}({\bf u}) = - \frac{1}{2}\covTrans^i_3 \left[ u_{i|\alpha\beta} + u_{i|\beta\alpha} \right]$ & $\bendStress^{\alpha\beta}({\bf u}) = \mathbb{\constitutive}^{\alpha\beta\lambda\mu} \bendStrain_{\lambda\mu}({\bf u})$
\end{tabular}
\end{table}

\noindent where the covariant form of the second derivative of the Cartesian displacement field is defined via
\begin{equation*}
  u_{i|\alpha\beta} = u_{i,\alpha\beta} - \christoffel^\lambda_{\alpha\beta} u_{i,\lambda}.
\end{equation*}
The bending and twisting moments can be derived through this relationship since $\bendStress_{nn}({\bf u}) = n_\alpha n_\beta \bendStress^{\alpha\beta}({\bf u})$ and $\bendStress_{nt}({\bf u}) = n_\alpha t_\beta \bendStress^{\alpha\beta}({\bf u})$, respectively. Lastly, the ersatz forces defined in \eqref{eqn:KL_Shell_ersatz} have in-plane components given by

\begin{equation*}
  \ersatz^\alpha({\bf u}) = \traction^{\alpha}({\bf u}) - \SFF^\alpha_{\cdot\lambda} \bendStress^{\lambda \beta}({\bf u}) \bdyNormal_\beta - \moment_{nt}({\bf u}) \SFF^\alpha_{\cdot\lambda} \bdyTangent^\lambda.
\end{equation*}
The out-of-plane components of the ersatz forces are given by
\begin{equation*}
\ersatz^3({\bf u}) = \bdyNormal_\alpha \bendStress^{\alpha\beta}_{\cdot\cdot|\beta}({\bf u}) + \bdyNormal_{\alpha|\lambda} \bendStress^{\alpha\beta}({\bf u}) \bdyTangent_\beta \bdyTangent^\lambda + \bdyNormal_\alpha \bendStress^{\alpha\beta}_{\cdot\cdot|\lambda}({\bf u}) \bdyTangent_\beta \bdyTangent^\lambda + \bdyNormal_\alpha \bendStress^{\alpha\beta}({\bf u}) \bdyTangent_{\beta|\lambda} \bdyTangent^\lambda.
\end{equation*}
Note that $\mathbb{\constitutive}^{\alpha\beta\lambda\mu}_{\cdot\cdot\cdot\cdot|\nu} \equiv 0$ because the covariant derivative of the metric tensor vanishes since it is associated with the Levi-Civita connection. Therefore, the covariant derivative of the bending stress is given by
\begin{equation*}
  \bendStress^{\alpha\beta}_{\cdot\cdot|\nu}({\bf u}) = \mathbb{\constitutive}^{\alpha\beta\lambda\mu} \bendStrain_{\lambda\mu|\nu}({\bf u}),
\end{equation*}
where the covariant derivative of the bending strain is given by
\begin{equation*}
\bendStrain_{\alpha\beta|\nu}({\bf u}) = \frac{1}{2} \left[ \covTrans^i_\lambda \left( \SFF^\lambda_{\cdot\nu} u_{i|\alpha\beta} + \SFF^\lambda_{\cdot\nu} u_{i|\beta\alpha} \right) - \covTrans^i_3 \left( u_{i|\alpha\beta\mu} + u_{i|\beta\alpha\mu} \right) \right]
\end{equation*}
and the covariant form of the third derivative of the Cartesian displacement field is given by
\begin{equation*}
u_{i|\alpha\beta\mu}({\bf u}) = u_{i,\alpha\beta\mu} - u_{i,\lambda}\christoffel^\lambda_{\alpha\beta,\mu} - \christoffel^\lambda_{\alpha\beta}u_{i,\lambda\mu} - \christoffel^\lambda_{\alpha\mu} u_{i|\lambda\beta} - \christoffel^\lambda_{\beta\mu} u_{i|\alpha\lambda},
\end{equation*}
with the derivatives of the Christoffel symbols presented in \eqref{eqn:dChristoffel}.


\section{Geometric Parameterizations}
\label{sec:Geo_Param}

{\setlength\intextsep{0pt}
\begin{wrapfigure}{r}{0.2\textwidth}
  \centering
  \includegraphics{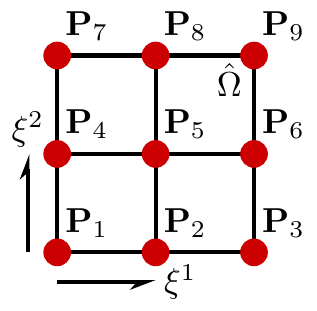}
  \caption{The parametric domain with control points denoted in red.}
  \label{fig:Geo_param_para}
\end{wrapfigure}

In this appendix, we provide the control points and NURBS weights associated with the geometries we considered in Section~\ref{sec:num_results}. Note that all geometries are parameterized using a one-element, biquadratic NURBS patch with a canonical ordering as shown in Figure~\ref{fig:Geo_param_para} over the parametric domain $\hat{\Omega}$. Experiments involving finer mesh resolutions and higher polynomial degrees are obtained through knot insertion and degree elevation, respectively, of this parameterization. The NURBS control points ${\bf P}_i = \left( P_{i,x}, P_{i,y}, P_{i,z} \right)$ and NURBS weights $w_i$ for each geometric configuration are given in Table~\ref{table:Geo_Params}, where the ordering of the points therein corresponds to the ordering of the control points shown in Figure~\ref{fig:Geo_param_para}. For quick reference, recall that Problem 1 is a quarter annulus, Problem 2 is an astroid, Problems 3 and 4 are quarter cylinders, Problems 5 and 6 are hyperbolic paraboloids, and Problems 7 and 8 are quarter-hemispherical shells with a hole.

\vspace{10pt}

\begin{table}[h!]
      \caption{NURBS control points and weights for geometries of Section~\ref{sec:num_results}.}
      \centering
      \begin{adjustbox}{width=\columnwidth,center}
        \begin{tabular}{c|cccc|cccc|cccc|cccc|cccc}
          \multicolumn{1}{c}{} & \multicolumn{4}{c}{Problem 1} & \multicolumn{4}{c}{Problem 2} & \multicolumn{4}{c}{Problem 3 \& 4} & \multicolumn{4}{c}{Problem 5 \& 6} & \multicolumn{4}{c}{Problem 7 \& 8} \vspace{3pt}\\
          $i$ & $P_{i,x}$ & $P_{i,y}$ & $P_{i,z}$ & $w_i$ & $P_{i,x}$ & $P_{i,y}$ & $P_{i,z}$ & $w_i$ & $P_{i,x}$ & $P_{i,y}$ & $P_{i,z}$ & $w_i$ & $P_{i,x}$ & $P_{i,y}$ & $P_{i,z}$ & $w_i$ & $P_{i,x}$ & $P_{i,y}$ & $P_{i,z}$ & $w_i$ \\
          \hline
          $1$ & $1$&$0$&$0$ & $1$ & $0$&$0$&$0$ & $1$ & $1$&$0$&$0$ & $1$ & $\sqrt{2}$&$0$&$-1$ & $1$ & $1$&$0$&$0$ & $1$\\
          $2$ & $\sfrac{1}{2}$&$0$&$0$ & $1$ & $\sfrac{1}{3}$&$\sfrac{1}{2}$&$0$ & $1$ & $1$&$1$&$0$ & $\sfrac{1}{\sqrt{2}}$ & $\sqrt{2}$&$\sqrt{2}$&$-1$ & $1$ & $1$&$0$&$\sfrac{1}{\sqrt{3}}$ & $1$\\
          $3$ & $2$&$0$&$0$ & $1$ & $0$&$1$&$0$ & $1$ & $0$&$1$&$0$ & $1$ & $0$&$\sqrt{2}$&$-1$ & $1$ & $\sfrac{1}{2}$&$0$&$\sfrac{\sqrt{3}}{2}$ & $1$\\
          $4$ & $1$&$1$&$0$ & $\sfrac{1}{\sqrt{2}}$ & $\sfrac{1}{2}$&$\sfrac{1}{3}$&$0$ & $1$ & $1$&$0$&$\sfrac{1}{2}$ & $1$ & $\sfrac{1}{\sqrt{2}}$&$0$&$0$ & $1$ & $1$&$1$&$0$ & $1$\\
          $5$ & $\sfrac{1}{2}$&$\sfrac{1}{2}$&$0$ & $\sfrac{1}{\sqrt{2}}$ & $\sfrac{1}{2}$&$\sfrac{1}{2}$&$0$ & $1$ & $1$&$1$&$\sfrac{1}{2}$ & $\sfrac{1}{\sqrt{2}}$ & $\sfrac{1}{\sqrt{2}}$&$\sfrac{1}{\sqrt{2}}$&$0$ & $1$ & $1$&$1$&$\sfrac{1}{\sqrt{3}}$ & $1$\\
          $6$ & $2$&$2$&$0$ & $\sfrac{1}{\sqrt{2}}$ & $\sfrac{1}{2}$&$\sfrac{1}{3}$&$0$ & $1$ & $0$&$1$&$\sfrac{1}{2}$ & $1$ & $0$&$\sfrac{1}{\sqrt{2}}$&$0$ & $1$ & $\sfrac{1}{2}$&$\sfrac{1}{2}$&$\sfrac{\sqrt{3}}{2}$ & $1$\\
          $7$ & $0$&$1$&$0$ & $1$ & $1$&$0$&$0$ & $1$ & $1$&$0$&$1$ & $1$ & $\sqrt{2}$&$0$&$1$ & $1$ & $0$&$1$&$0$ & $1$\\
          $8$ & $0$&$\sfrac{1}{2}$&$0$ & $1$ & $\sfrac{2}{3}$&$\sfrac{1}{2}$&$0$ & $1$ & $1$&$1$&$1$ & $\sfrac{1}{\sqrt{2}}$ & $\sqrt{2}$&$\sqrt{2}$&$1$ & $1$ & $0$&$1$&$\sfrac{1}{\sqrt{3}}$ & $1$\\
          $9$ & $0$&$2$&$0$ & $1$ & $1$&$1$&$0$ & $1$ & $0$&$1$&$1$ & $1$ & $\sqrt{2}$&$\sqrt{2}$&$1$ & $1$ & $0$&$\sfrac{1}{2}$&$\sfrac{\sqrt{3}}{2}$ & $1$\\
        \end{tabular}
      \end{adjustbox}
      \label{table:Geo_Params}
\end{table}
}

\bibliography{references}

\end{document}